\def\subsubsection{\@startsection{subsubsection}{3}%
	\z@{.5\linespacing\@plus.7\linespacing}{-.5em}%
	{\normalfont\bfseries}}
\newtheorem{thm}{Theorem}[section]
\newtheorem{lemme}[thm]{Lemma}
\newtheorem{prop}[thm]{Proposition}
\newtheorem{ex}[thm]{Example}
\newtheorem{coro}[thm]{Corollary}
\newtheorem{defi}[thm]{Definition}
\newtheorem{rem}[thm]{Remark}
\newtheorem{fact}{Fact}
\newtheorem*{thm*}{Theorem}
\def\D{{\mathbb{D}}}
\def\R{{\mathbb{R}}}
\def\C{{\mathbb{C}}}
\def\N{{\mathbb{N}}}
\def\T{{\mathbb{T}}}
\def\UU{{\mathcal{U}}}
\def\FF{{\mathcal{F}}}
\def\GG{{\mathcal{G}}}
\def\CC{{\mathcal{C}}}
\def\MM{{\mathcal{M}}}
\def\WW{{\mathcal{W}}}
\numberwithin{equation}{section} %% Comment out for sequentially-numbered
\numberwithin{figure}{section} %% Comment out for sequentially-numbered
\numberwithin{table}{section} %% Comment out for sequentially-numbered
\begin{document}
	\title{Universal sequences of composition operators}
	\author{S. Charpentier, A. Mouze}
	
	\address{St\'ephane Charpentier, Institut de Math\'ematiques, UMR 7373, Aix-Marseille
	Universite, 39 rue F. Joliot Curie, 13453 Marseille Cedex 13, France}
	\email{stephane.charpentier.1@univ-amu.fr}
	
	\address{Augustin Mouze, Laboratoire Paul Painlev\'e, UMR 8524, Cit\'e Scientifique, 59650 Villeneuve
	d'Ascq, France, Current address: \'Ecole Centrale de Lille, Cit\'e Scientifique, CS20048, 59651
	Villeneuve d'Ascq cedex, France}
	\email{augustin.mouze@univ-lille.fr}
	
	\thanks{The authors are supported by the grant ANR-17-CE40-0021 of the French National Research Agency ANR (project Front).}
	\keywords{Composition operators, Universal sequences of operators}
	\subjclass[2010]{30K15, 47B33}
	
	\begin{abstract}Let $G$ and $\Omega$ be two planar domains. We give necessary and sufficient conditions on a sequence $(\phi_n)$ of eventually injective holomorphic mappings from $G$ to $\Omega$ for the existence of a function $f\in H(\Omega)$ whose orbit under the composition by $(\phi_n)$ is dense in $H(G)$. This extends a result of the same nature obtained by Grosse-Erdmann and Mortini when $G=\Omega$. An interconnexion between the topological properties of $G$ and $\Omega$ appears. Further, in order to exhibit in a natural way holomorphic functions with wild boundary behaviour on planar domains, we study a certain type of universality for sequences of continuous mappings from a union of Jordan curves to a domain.
	\end{abstract}
	
	\maketitle

\section{Introduction}

Let $\Omega$ be a domain ({\it i.e.} a connected open set) in $\C$ and let $H(\Omega)$ denote the Fréchet space of holomorphic functions on $\Omega$, endowed with the locally uniform topology. Given two domains $G$ and $\Omega$, we say that a sequence $\Phi=(\phi_n)_n$ of holomorphic functions from $G$ to $\Omega$ is \textit{universal} if there exists a function $f\in H(\Omega)$ such that the set
\[
\{f\circ \phi_n:\,n\in \N\}
\]
is dense in $H(G)$. If such function $f$ exists, we call it a $(\Phi,G,\Omega)$-universal function.
%Note thatIt is equivalent to saying that the sequence $(C_{\phi_n})_n$ of composition operators from $H(\Omega)$ to $H(G)$ is universal. We recall that if $\phi$ is a holomorphic function from $G$ to $\Omega$, the composition operator $C_{\phi}$ is defined by the mapping $f\mapsto f\circ \phi$, $f\in H(\Omega)$.

When $G=\Omega$, universal sequences of holomorphic selfmaps of $\Omega$ have been subject of many researches. In this setting, the first examples were provided for $\Omega=\C$ by Birkhoff \cite{Birkhoff} with sequences $(\phi_n)_n$ of the form $\phi_n(z)=z+n$. In 1995, Bernal and Montès \cite{BerMon} have characterised the sequences of automorphisms admitting a universal function, provided $\Omega$ is not conformally equivalent to the punctured plane $\C^*$. Later, Grosse-Erdmann and Mortini obtained a complete description of those sequences $\Phi=(\phi_n)_n$ of {\it eventually injective} holomorphic functions from a domain $\Omega$ into itself that are universal \cite[Theorem 3.19]{G-Erdmann-Mortini}. In this setting, it appears that the existence of such universal sequences depends on the geometry of $\Omega$.
%begin{enumerate}
%		\item If $\Omega$ is simply connected, then $\Phi$ is universal on $H(\Omega)$ if and only if for every compact set $K\subset \Omega$, there exists $n\in \N$ such that $\phi_{n}(K)\cap K=\emptyset$.
%		\item If $\Omega$ is finitely connected, then $\Phi$ is not universal on $H(\Omega)$.
%		\item If $\Omega$ is infinitely connected, then $\Phi$ is universal on $H(\Omega)$ if and only if for $\Omega$-convex compact subset $K$ of $\Omega$ and every $N\in \N$, there exists $,\geq N$ such that $\phi_n(K)$ is $\Omega$-convex and $\phi_n(K)\cap K =\emptyset$ (we refer to \cite[Definition 3.1]{G-Erdmann-Mortini} for the definition of $\Omega$-convexity).
%	\end{enumerate}
For instance, while every simply connected domain supports a universal sequence of holomorphic automorphisms \cite{BerMon}, a finitely connected domain supports universal sequences of injective holomorphic mappings if and only if it is simply connected. Without the injectivity assumption, the problem becomes more involved and pathological examples of universal sequences of non-injective holomorphic selfmaps of a domain (possibly finitely connected but not simply connected) can be exhibited (see \cite[Remark 3.7]{G-Erdmann-Mortini}). Roughly speaking, the reason for the injective case to be so restrictive is that injective holomorphic mappings preserve the most important topological property of compact sets in complex approximation, namely the number of connected components of their complement.

\medskip

We refer to the references listed in \cite{G-Erdmann-Mortini} for a complete overview of the subject before 2009. The topic has been extensively continued in various directions later on, see for \emph{e.g.} \cite{Ber,BerCalJungPra,BerCalPra,Bes1,Bes2,Jungphd,Jung,JungMul,Meyr,Zajac}.

\medskip

Despite the large amount of contributions, it seems that the problem of the existence of holomorphic functions universal for sequences of composition operators have never been studied when the domains $G$ and $\Omega$ differ. Let us however mention a very recent paper by Meyrath \cite{Meyr} in which the author gives necessary and sufficient conditions on a sequence $(\phi_n)_n$ of holomorphic functions from one domain $G$ to an other domain $\Omega$ that guarantee the existence of \emph{meromorphic} functions in $H(\Omega)$ such that the set $\{f\circ \phi_n:\,n\in \N\}$ is dense in the Fréchet space of meromorphic functions on $G$. Rather naturally, it appears that no topological conditions are needed on $G$ and $\Omega$. The reason is practically that allowing (universal) approximation by meromorphic functions allows one to "create" holes anywhere in the domain $\Omega$. This is also why requiring holomorphic approximation becomes more involved and makes the above problem natural and interesting, in particular from a topological point of view.

Now let us make a simple observation. Clearly, the existence of universal sequences of holomorphic functions is in some sense conformally invariant: if $G'$ is conformally equivalent to $G$, and $\Omega'$ conformally equivalent to $\Omega$, then a sequence $(\phi_n)_n$ of holomorphic functions from $G$ to $\Omega$ is universal whenever the sequence $(\varphi \circ \phi_n \circ \psi)$ from $G'$ to $\Omega'$ is itself universal, where $\varphi$ and $\psi$ are conformal maps. In particular, the case where $G$ and $\Omega$ are conformally equivalent reduces to $G=\Omega$. Thus, investigating the case where $G$ and $\Omega$ are not conformally equivalent is of particular interest.

In the first part of this paper, for any domains $G$ and $\Omega$, we obtain a complete characterisation of the sequences $\Phi=(\phi_n)_n$ of eventually injective holomorphic mappings from $G$ to $\Omega$ for which $(\Phi,G,\Omega)$-universal functions exist. More precisely:
\begin{thm*}Let $G$ and $\Omega$ be two domains and let $\Phi=(\phi_n)_n$ be a sequence of eventually injective holomorphic mappings from $G$ to $\Omega$. A necessary and sufficient condition that guarantees the existence of $(\Phi,G,\Omega)$-universal functions is that for any $G$-convex compact subset $K$ of $G$, any $\Omega$-convex compact subset $L$ of $\Omega$ and any $N\in \N$, there exists $n\geq N$ such that $\phi_n(K)\cap L=\emptyset$ and $\phi_n(K)\cup L$ is $\Omega$-convex.
\end{thm*}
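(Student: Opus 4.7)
The plan is to recast the existence of universal functions through the Birkhoff Transitivity Criterion: in the separable Fr\'echet space $H(G)$, the sequence of composition operators $C_{\phi_n}\colon H(\Omega)\to H(G)$, $f\mapsto f\circ\phi_n$, admits a universal element if and only if the family $(C_{\phi_n})$ is topologically transitive, i.e., for every pair of nonempty open sets $U\subset H(\Omega)$, $V\subset H(G)$ and every $N\in\N$ there exists $n\geq N$ with $C_{\phi_n}(U)\cap V\neq\emptyset$. Since any compact subset of $G$ (resp.\ $\Omega$) is contained in a $G$-convex (resp.\ $\Omega$-convex) compact, it suffices to test transitivity on basic neighborhoods
\[
U=\{h\in H(\Omega):\sup_L|h-h_0|<\e\}, \qquad V=\{g\in H(G):\sup_K|g-g_0|<\e\},
\]
with $K\subset G$ a $G$-convex compact, $L\subset\Omega$ an $\Omega$-convex compact, $h_0\in H(\Omega)$, $g_0\in H(G)$, $\e>0$.

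For \emph{sufficiency}, given $U,V,N$ as above, the topological hypothesis supplies $n\geq N$ with $\phi_n(K)\cap L=\emptyset$ and $\phi_n(K)\cup L$ $\Omega$-convex. Eventual injectivity of $\phi_n$ makes it a biholomorphism $G\to\phi_n(G)$, so $\phi_n^{-1}$ is holomorphic on $\phi_n(G)$. I then glue $h_0$ (on an open neighborhood of $L$) and $g_0\circ\phi_n^{-1}$ (on an open neighborhood of $\phi_n(K)\subset\phi_n(G)$) into a single function $F$ holomorphic on a neighborhood of $\phi_n(K)\cup L$; this is possible precisely because these two compacts are disjoint. The $\Omega$-convexity of $\phi_n(K)\cup L$ then allows an application of Runge's approximation theorem to produce $f\in H(\Omega)$ with $\sup_{\phi_n(K)\cup L}|f-F|<\e$, which gives $f\in U$ and $f\circ\phi_n\in V$.

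For \emph{necessity}, I assume $(C_{\phi_n})$ is transitive and fix $K,L,N$. Applying transitivity with $h_0\equiv 0$, $g_0\equiv 1$, and $\e<1/2$ produces $n\geq N$ and $f\in H(\Omega)$ with $\sup_L|f|<\e$ and $\sup_K|f\circ\phi_n-1|<\e$. Any $z\in K$ with $\phi_n(z)\in L$ would give $|f(\phi_n(z))|<\e$ and $|1-f(\phi_n(z))|<\e$, hence $1<2\e<1$, a contradiction; thus $\phi_n(K)\cap L=\emptyset$ for this $n$.

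The hard part is to show that $\phi_n(K)\cup L$ is $\Omega$-convex for some $n\geq N$ (already satisfying the disjointness). The approximability of a single target pair does not, by itself, rule out a relatively compact component of $\Omega\setminus(\phi_n(K)\cup L)$, since the maximum modulus principle on such a component is compatible with any prescribed boundary values. The plan is therefore to invoke the Runge characterization: a compact $E\subset\Omega$ is $\Omega$-convex iff every function holomorphic in a neighborhood of $E$ can be uniformly approximated on $E$ by $H(\Omega)$-functions. Since $L$ is $\Omega$-convex and $\phi_n(K)$ is $\phi_n(G)$-convex (the $G$-convexity of $K$ transferring through the biholomorphism $\phi_n$), any $F$ holomorphic in a neighborhood of $\phi_n(K)\cup L$ can be uniformly approximated on $L\sqcup\phi_n(K)$ by a function of the form ``$h_0$ near $L$, $g_0\circ\phi_n^{-1}$ near $\phi_n(K)$'' with $h_0\in H(\Omega)$ and $g_0\in H(G)$. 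Combining this with a Baire category argument, using that universal vectors for $(C_{\phi_n})$ form a dense $G_\delta$ in $H(\Omega)$ and that the orbit $\{f\circ\phi_n\}$ of any such universal $f$ is dense in $H(G)$, the plan is to extract $n\geq N$ for which every such $F$ is $H(\Omega)$-approximable on $\phi_n(K)\cup L$, thus forcing $\Omega$-convexity. The subtle point—flipping the quantifiers from ``for each target, some $n$'' to ``some $n$, for all targets in a countable dense family''—is where the Baire intersection, applied to the family of open conditions defining approximability, is expected to do the work.
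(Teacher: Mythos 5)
Your sufficiency argument and your proof that $\phi_n(K)\cap L=\emptyset$ for some $n\geq N$ are both correct and essentially identical to the paper's (Runge on the disjoint union for the first, a target pair $(0,1)$ versus the paper's large constant for the second). The gap is in the remaining — and central — part of the necessity: showing that $\phi_n(K)\cup L$ is $\Omega$-convex for such an $n$. Your plan is to use the Runge-type characterisation (``$E$ is $\Omega$-convex iff every function holomorphic near $E$ is uniformly approximable on $E$ by $H(\Omega)$'') and then flip quantifiers by Baire category over a countable family of targets. This does not work, for a structural reason: the function witnessing the failure of $\Omega$-convexity is $1/(z-a)$ with $a$ in a hole of $\phi_n(K)\cup L$ contained in $\Omega$, and both $a$ and the corresponding target $g=1/(\phi_n(\cdot)-a)\in H(G)$ depend on $n$, whereas universality only produces, for a \emph{given} target, \emph{some} unspecified $n$. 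There is no single countable family of targets whose approximability at a common index forces $\Omega$-convexity of $\phi_n(K)\cup L$ for that index, so the Baire intersection has nothing to bite on. A second, related gap: you assert that $\phi_n(K)$ is ``$\phi_n(G)$-convex'' by transport of $G$-convexity through the biholomorphism, but $\phi_n(G)$-convexity does not imply $\Omega$-convexity (a hole of $\phi_n(K)$ may contain a point of $\C\setminus\phi_n(G)$ that lies in $\Omega$); even the $\Omega$-convexity of $\phi_n(K)$ alone requires proof.

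The paper circumvents the quantifier problem by choosing targets whose relevant feature is a \emph{topological invariant independent of $n$}: for $K$ with $p\geq 1$ holes it approximates $g_m(z)=m(z-b)^{p+1}/\prod_{i=1}^p(z-\lambda_i)$ (with $\lambda_i$ in holes of $G$), so that the logarithmic integrals $\frac{1}{2i\pi}\int_{\phi_n(\gamma_j)}f'/f$ all equal $1$ regardless of where $\phi_n(K)$ lands; the argument principle then forbids any hole of $\phi_n(K)$ interior to $\Omega$ (a hole would have to contain $-1$ zeroes of $f$), and a further iteration — enlarging $L$ to $L_1,L_2,\dots$ and exhausting the finitely many holes of $L$, with a final zero-count of $-2$ ruling out the nested configuration of Lemma \ref{lemme-one-infinity-holes} — yields the $\Omega$-convexity of the union. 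You would need an argument of this kind (or some other $n$-uniform obstruction) to close your proof; as written, the key step is a plan rather than a proof.
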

We recall that a compact subset $K$ of a domain $G$ is $G$-convex if any non-empty bounded connected component of $\C\setminus K$ contains a point of $\C\setminus G$. It will be deduced from this theorem that if there exist universal eventually injective  sequences $\Phi$ from $G$ to $\Omega$, then $G$ is simply connected or $\Omega$ is infinitely connected. Let us recall that if $G=\Omega$, then $G$ supports universal eventually injective sequences of holomorphic selfmaps only if it is simply connected or infinitely connected \cite{G-Erdmann-Mortini}. Illustrating the difference with the case where $G$ is equal to $\Omega$, we will see that for any bounded domain $G$, there exist (non-trivial) domains $\Omega$ for which there exist universal injective sequences $\Phi$ from $G$ to $\Omega$. Furthermore, if $G$ is simply connected or has at least two holes, the $\Omega$-convexity of $\phi_n(K)\cup L$ can be replaced with the $\Omega$-convexity of $\phi_n(K)$. In particular, our results cover that of \cite[Section 3]{G-Erdmann-Mortini}. When $G$ is doubly connected and $\Omega$ infinitely connected, we will see that the condition $\phi_n(K)\cup L$ cannot be replaced with the $\Omega$-convexity of $\phi_n(K)$.
%As one may expect from the case $G=\Omega$, the existence of universal sequences will essentially depend on the connectivity of both $G$ and $\Omega$. This general setting will yet reveal interesting interdependences between the geometry of $G$ and that of $\Omega$. In particular, the case where $G$ is doubly connected and $\Omega$ infinitely connected is rather intriguing.

\medskip

In a second part of the paper, motivated by exhibiting holomorphic functions with wild boundary behaviour, we will study another kind of compositional universality. To start with, let us consider a very simple setting. We denote by $\D=\{z\in \C:\, |z|<1\}$ the unit disc of the complex plane, and by $\T$ its boundary. Let $(r_n)_n$ be a sequence in $[0,1)$ converging to $1$, and let $\phi_n : \T \to \D$ defined by $\phi_n(z)=r_n z$. As a particular case of a result stated in \cite{Charp}, there exists a function $f \in H(\D)$ such that the set $\{f\circ \phi_n:\,n\in \N\}$ is dense in the Banach space $\CC(K)$ of all continuous functions on $K$ (endowed with the sup-norm), for any subset $K$ of $\T$ different from $\T$. Such functions enjoy a very singular behaviour near the boundary of $\T$. In general, this type of boundary behaviour is (strictly) wilder than having a dense cluster set along any continuous path to the boundary - another example of erratic boundary behaviour which was considered in several papers, see \cite{Prado} and the references therein. % We will call such function an \textit{Abel-universal function}.
Note that the sequence $(\phi_n)_n$ can also be seen as a sequence of holomorphic selfmaps of $\D$ and that in this case, it is obviously not universal in the sense considered in the first part of this introduction. Our aim is to define another setting for compositional universality making the last result a natural example of which. Let $G$ be a closed subset of $\C$ with empty interior, let $\MM$ be a family of compact subsets of $G$ with empty interior, and let $\Omega$ be a domain in $\C$. We say that a sequence $\Phi=(\phi_n)_n$ of continuous functions from $G$ to $\Omega$ is \emph{$\MM$-universal} if there exists a function $f\in H(\Omega)$ such that, for any $K\in \MM$, the set $\{f\circ \phi_n:\,n\in \N\}$ is dense in $\CC(K)$. Such a function $f$ will be called a $(\Phi,\MM)$-universal function. In order to simplify the notation, we omit the dependence in $G$ and $\Omega$%(which in fact is hidden in the definition of $\Phi$)
. This definition may look a bit articifial, so let us comment on some justifications. First of all, in order not to fall in the definition of universality considered in the beginning, it is natural to consider that the functions $\phi_n$'s are only assumed to be continuous. Now, because we still want our universal function $f$ to be holomorphic, we will have to use complex polynomial approximation to approximate \textit{continuous} functions on some sets of the form $\phi_n(K)$, where $K$ is a compact subset of $G$. This can often be impossible if $K$ does not have empty interior.

The second part of this paper is thus devoted to give conditions, necessary or sufficient, for some sequence $\Phi$ to be universal in this new setting. We will focus especially on the more suitable - and quite illustrative - case where $G$ is a union of Jordan curves. As desired, this will allow us to exhibit in a simple way functions holomorphic on rather general domains that enjoy a wild boundary behaviour.

%\medskip

%In the third and last part of the paper, we will provide with a thorough study of Abel-universality in the unit disc. It is sort of a canonical model to study holomorphic functions with \textit{wild} boundary behaviour via compositional universality. Many classical dynamical properties will be investigated (frequent universality,  common universality, Costakis-Peris property,...).
%We will also answer a question posed in \cite{Charp} by showing that the class of Abel-universal functions is different from a certain large class of universal series.

%\medskip

%The paper is organized as follows. In Section \ref{section-domain} we study universal sequence of holomorphic functions between two domains. Section \ref{section-closed-G} is devoted to universality for sequences of continuous functions from a subset of the complex plane with empty interior to a domain. The last section concerns Abel-universality.

\medskip

%\begin{notas}[Notations and terminology]{\rm 
%}
%\end{notas}

\section{Preliminaries}

In the whole paper, $\N$ denotes the set $\{0,1,2,3,\ldots\}$ of all non-negative integers. A sequence of general terms $u_n$, $n\in \N$, will be denoted by $(u_n)_n$.

\medskip

Let us introduce some notations and terminology related to topological notions.

\begin{itemize}

\item If $E$ is a subset of $\C$, we denote by $\partial E$ its boundary and by $\text{int}(E)$ its interior.

\item If $\Omega$ is a domain in $\C$, we call an \textit{exhaustion} of $\Omega$ a sequence $(K_n)_n$ of non-empty compact subsets of $\Omega$ which satisfy the following: $K_n\subset \text{int}(K_{n+1})$ and for any compact set $K\in \Omega$, there exists $n\in \N$ such that $K\subset K_n$. We recall that any domain in $\C$ admits an exhaustion.
	
\item Let $\Omega$ be a domain and $\Phi=(\phi_n)_n$ a sequence of holomorphic functions on $\Omega$. We say that $\Phi$ is \textit{eventually injective} if for any compact set $K$, there exists $N\in \N$ such that $\phi_n$ is injective on $K$ (or equivalently on a neighbourhood of $K$) for any $n\geq N$.

\item A \textit{Jordan curve} is the image of the unit circle by an injective continuous map. A \textit{Jordan arc} is the image of $[0,1]$ by an injective continuous map (note that with our definition a Jordan curve is not a Jordan arc, and \textit{vice versa}).

\item We say that a compact subset $K$ of $\C$ is \textit{regular} if $\partial K$ is a union of Jordan curves.

\item If $E$ is a subset of $\C$, we call a \textit{hole} of $E$ a non-empty bounded connected component of the complement $\C\setminus E$ of $E$.

\item A domain is said to be \textit{simply connected} if it has no hole, \textit{doubly connected }if it has one hole, \textit{finitely connected} if it has a positive finite number of holes, and \textit{infinitely connected} if it has infinitely many holes.

\item If $K$ is a compact subset of $\C$, we denote by $\widehat{K}$ its \textit{polynomial hull}, that is the union of $K$ and of all its holes.

\item If $K$ is a regular compact subset of $\C$, we call \textit{outer boundary} of $K$ the boundary (in $\C$) of the unbounded component of its complement, and by \textit{inner boundary} the boundary of the union of its holes.

\item A compact subset $K$ of an open set $\Omega$ in $\C$ is said to be $\Omega$-convex if every hole of $K$ contains a point of $\C\setminus \Omega$.

Note that if a compact subset $K$ of an open set $\Omega$ has a hole that contains a point of $\C\setminus\Omega$, then this hole contains the hole of $\Omega$ containing this point.

\item Let $\Omega$ be a domain. We say that a non-empty compact set $K$ in $\Omega$ is \emph{$\Omega$-connected} if it has no hole whenever $\Omega$ is simply connected, and does have a hole if $\Omega$ is not simply connected.

\item If $\Omega$ is a domain, the notation $\MM(\Omega)$ will stand for the set of all compact subsets of $\Omega$ which are regular, connected, $\Omega$-connected and $\Omega$-convex.

\end{itemize}

Runge and Mergelyan Theorems are at the core of the construction of universal holomorphic functions. Let us recall them.

\begin{thm}[Runge Theorem, see \cite{Rudin}]Every function holomorphic in a neighbourhood of a compact set $K\subset \C$ can be uniformly approximated on $K$ by rational functions with poles off $K$.
\end{thm}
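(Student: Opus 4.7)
The plan is to deduce Runge's theorem from the Cauchy integral formula combined with a Riemann sum approximation, once one has produced a suitable cycle lying in the complement of $K$ on which to integrate. Let $U$ be an open neighbourhood of $K$ on which $f$ is holomorphic; the goal is to find a cycle $\Gamma\subset U\setminus K$ such that, for every $z\in K$,
\[
f(z)=\frac{1}{2\pi i}\int_{\Gamma}\frac{f(w)}{w-z}\,dw.
\]
Once this is achieved, the result follows easily: the integrand is, for each fixed $z\in K$, a continuous function of $w\in\Gamma$, and since $K$ is compact and $\dist(K,\Gamma)>0$, the integrand depends equicontinuously on $w$ in the parameter $z\in K$. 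Replacing the integral by Riemann sums then produces functions of the form $\sum_{j}\frac{c_j}{w_j-z}$ with $w_j\in\Gamma\subset\C\setminus K$, which are rational functions with poles off $K$, and which approximate $f$ uniformly on $K$.

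The main step, and the place where genuine work is required, is the construction of the cycle $\Gamma$. The cleanest approach is the standard \emph{pavage} (grid) argument: tile $\C$ by closed squares of side $\delta$ smaller than $\dist(K,\C\setminus U)/\sqrt{2}$, select the finite family $\{Q_1,\ldots,Q_m\}$ of those squares that meet $K$, and form the chain $\Gamma_0=\sum_{j=1}^m \partial Q_j$ with each $\partial Q_j$ oriented counter-clockwise. Every oriented edge that is shared by two selected squares appears with opposite signs and cancels; the cycle $\Gamma$ is then obtained by keeping only the surviving edges. By the choice of $\delta$, every selected $Q_j$ is contained in $U$, so $\Gamma\subset U$. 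Moreover, any surviving edge belongs to a square $Q_j$ meeting $K$ but to no adjacent square meeting $K$; one then checks geometrically that such an edge cannot intersect $K$, giving $\Gamma\subset U\setminus K$.

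It remains to verify the integral representation on $K$. For $z$ lying in the interior of some selected square $Q_j$, the Cauchy integral formula applied to each $Q_k$ gives $\int_{\partial Q_k}\frac{f(w)}{w-z}dw$ equal to $2\pi i f(z)$ if $k=j$ and $0$ otherwise; summing yields $\frac{1}{2\pi i}\int_{\Gamma_0}\frac{f(w)}{w-z}dw=f(z)$, and since the cancelled edges do not contribute, the same identity holds with $\Gamma$ in place of $\Gamma_0$. For $z\in K$ lying on the boundary of several selected squares, one argues by continuity, noting that $z\notin\Gamma$ so the integrand is continuous in $z$ in a neighbourhood of $z$ inside $\bigcup_j Q_j$.

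The principal obstacle is therefore topological: ensuring that after cancellation the remaining edges genuinely avoid $K$ while still enclosing each point of $K$ with winding number one. Once this is handled by the pavage construction, the analytic content reduces to uniform continuity of $(w,z)\mapsto f(w)/(w-z)$ on $\Gamma\times K$ and a routine Riemann sum estimate. Note that the statement allows poles anywhere in $\C\setminus K$, so no pole-pushing lemma is needed here; that refinement would only be required to prescribe the location of the poles in the components of $\C\setminus K$.
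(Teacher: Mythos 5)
The paper does not prove this statement: it is quoted as a classical result with a citation to Rudin, so there is no in-paper proof to compare against. Your argument is correct and is precisely the standard proof found in that reference --- the grid (pavage) cycle $\Gamma\subset U\setminus K$ representing $f$ on $K$ via the Cauchy formula, followed by uniform approximation of the integral by Riemann sums, which are rational functions with poles on $\Gamma\subset\C\setminus K$; and you rightly note that no pole-pushing is needed since the poles are only required to lie off $K$.
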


We recall that any $\Omega$-convex compact subset of a domain has at most finitely many holes (see for example \cite[Lemma 3.10]{G-Erdmann-Mortini}). Thus we deduce from Runge Theorem and \cite[Theorem 4, p119]{Gaier} the following version of Mergelyan Theorem.

\begin{thm}[Mergelyan Theorem]\label{Mergelyan}Let $\Omega$ be a domain and $K$ a compact $\Omega$-convex set. Every continuous function on $K$, holomorphic in its interior, can be approximated uniformly on $K$ by rational functions with poles off $\Omega$.
\end{thm}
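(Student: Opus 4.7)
The plan is to reduce the statement to the classical Mergelyan theorem (the cited Theorem 4, p.\,119 of Gaier) combined with Runge's pole-pushing argument, using $\Omega$-convexity to relocate poles into $\C \setminus \Omega$. Let $f$ be continuous on $K$ and holomorphic in $\text{int}(K)$. As noted just before the statement, $\Omega$-convexity of $K$ implies via \cite[Lemma 3.10]{G-Erdmann-Mortini} that $K$ has only finitely many holes $H_1,\ldots,H_m$, and by definition each $H_j$ contains some point $p_j \in \C\setminus\Omega$, which I fix once and for all.

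First I would invoke the cited Mergelyan theorem: for any $\e>0$ there is a rational function $R$ with poles off $K$ such that $\|f-R\|_K < \e/2$. The poles of $R$ lie in $\C\setminus K$, whose connected components are precisely $H_1,\ldots,H_m$ together with the unbounded component $H_\infty$. Writing the partial fraction decomposition, it suffices to show that for every pole $q$ of $R$ (in some $H_j$ or in $H_\infty$) the principal part at $q$ can be uniformly approximated on $K$, to arbitrary precision, by rational functions whose sole finite pole is $p_j$ (respectively, by polynomials).

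This is the standard Runge pole-pushing argument, which follows from the Runge theorem stated above. For poles in $H_j$: the set $\{p_j\}\cup K$ is compact and its complement has the same connected components as $\C\setminus K$ near $H_j$, so any function holomorphic near $K$ with a pole at $q\in H_j$ can be approximated uniformly on $K$ by rational functions whose poles lie in $\{p_j\}$ alone; formally one joins $q$ to $p_j$ inside $H_j$ by a polygonal path and moves the pole along this path by finitely many applications of Runge on overlapping small disks. For poles in $H_\infty$, the same chain argument lets one push the pole to $\infty$, producing a polynomial summand whose ``pole at infinity'' also lies off $\Omega$.

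Summing these approximations over the finitely many poles of $R$ yields a rational function $\tilde R$ with all finite poles in $\{p_1,\ldots,p_m\}\subset \C\setminus\Omega$, plus possibly a polynomial term, satisfying $\|R-\tilde R\|_K < \e/2$. The triangle inequality then gives $\|f-\tilde R\|_K<\e$, which is exactly the desired approximation. The only non-routine ingredient is the pole-pushing step, and it is entirely classical once the finiteness of the number of holes of $K$, furnished by $\Omega$-convexity, is in hand.
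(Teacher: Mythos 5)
Your argument is exactly the deduction the paper intends: it derives the statement from the classical Mergelyan theorem for compact sets with finitely many complementary components (Gaier, Theorem 4, p.~119) together with Runge pole-pushing into the points of $\C\setminus\Omega$ furnished by $\Omega$-convexity, with finiteness of the holes supplied by \cite[Lemma 3.10]{G-Erdmann-Mortini}. The paper leaves this as a one-line remark, and your write-up is a correct expansion of the same route.
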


To finish, the following lemma can be seen as a very special case of Kallin's lemma (see, for e.g., \cite[Theorem 1.6.19]{Stout}).

\begin{lemme}\label{lem-union-omega-convex}Let $\Omega$ be a domain and let $K,L$ be two $\Omega$-convex compact sets. If $K\subset \C\setminus \widehat{L}$ and $L\subset \C\setminus \widehat{K}$, then $K\cup L$ is $\Omega$-convex.
\end{lemme}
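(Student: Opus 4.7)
The plan is to establish the additivity $\widehat{K\cup L}=\widehat K\cup \widehat L$; from this identity every hole of $K\cup L$ will turn out to be a hole of $K$ or a hole of $L$, and the $\Omega$-convexity of $K$ and $L$ then finishes the argument.

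I would first check that $\widehat K\cap \widehat L=\emptyset$. If some $p$ lay in the intersection, then $p\notin K\cup L$ (by the hypotheses $K\cap \widehat L=\widehat K\cap L=\emptyset$), so $p$ would sit in a hole $H_K$ of $K$ and in a hole $H_L$ of $L$. Since $\overline{H_K}\subset \widehat K$ is disjoint from $L$, the continuum $\overline{H_K}$ is a connected subset of $\C\setminus L$ meeting $H_L$, whence $\overline{H_K}\subset H_L$. Symmetrically $\overline{H_L}\subset H_K$, and chaining the two inclusions forces $H_K=\overline{H_K}$, i.e.\ $H_K$ is both open and closed in $\C$, contradicting that it is bounded and non-empty.

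Next I would prove that $\widehat K\cup \widehat L$ has no holes. Passing to the Riemann sphere, set $U=\S^2\setminus \widehat K$ and $V=\S^2\setminus \widehat L$: both are open and connected because $\widehat K$, $\widehat L$ are polynomially convex, and $U\cup V=\S^2$ by the disjointness just established. A Mayer--Vietoris argument (equivalently, Alexander duality) then shows that $U\cap V=\S^2\setminus(\widehat K\cup \widehat L)$ is connected, so $\widehat K\cup \widehat L$ has no bounded complementary component. Monotonicity of the polynomial hull then gives $\widehat{K\cup L}\subset \widehat{\widehat K\cup \widehat L}=\widehat K\cup \widehat L\subset \widehat{K\cup L}$, and hence equality. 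Using once more $\widehat K\cap L=\widehat L\cap K=\widehat K\cap \widehat L=\emptyset$, the set of hole-points reads $\widehat{K\cup L}\setminus(K\cup L)=(\widehat K\setminus K)\sqcup(\widehat L\setminus L)$, a disjoint union of open sets; any connected component---i.e., any hole of $K\cup L$---therefore coincides with a single hole of $K$ or of $L$, and by $\Omega$-convexity of the latter contains a point of $\C\setminus \Omega$.

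The main obstacle I foresee is step two, the polynomial convexity of a disjoint union of polynomially convex compacta in $\C$. The sphere-based Mayer--Vietoris route seems the cleanest; a more hands-on alternative would analyze a purported hole $H$ of $\widehat K\cup \widehat L$ through the splitting $\partial H=(\partial H\cap \widehat K)\sqcup(\partial H\cap \widehat L)$, but controlling the connectedness of $\partial H$ ultimately requires the same kind of planar-topological input.
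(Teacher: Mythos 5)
Your argument is correct. Note that the paper itself gives no proof of this lemma: it simply remarks that the statement is a very special case of Kallin's lemma and cites \cite[Theorem 1.6.19]{Stout}, a general criterion for polynomial convexity of unions in $\C^n$ built on a separating polynomial. Your route is a self-contained planar substitute. The first step, $\widehat{K}\cap\widehat{L}=\emptyset$ via the chain $\overline{H_K}\subset H_L$ and $\overline{H_L}\subset H_K$, is clean and correct. The second step is the classical fact (Janiszewski's theorem, or your Mayer--Vietoris argument on $\S^2$) that a disjoint union of two polynomially convex compacta in $\C$ is polynomially convex; two small points are worth making explicit there: after concluding that $\S^2\setminus(\widehat{K}\cup\widehat{L})$ is connected you still have to delete the point at infinity, which is harmless since an open connected subset of a surface remains connected after removing a point, and in the final step a hole $H$ of $K\cup L$ is a priori only \emph{contained} in a hole $H'$ of $K$ (or of $L$), while the reverse inclusion $H'\subset H$ --- which is what lets you transport the point of $\C\setminus\Omega$ --- follows because $H'\subset\widehat{K}$ is a connected subset of $\C\setminus(K\cup L)$ meeting $H$ and $H$ is a component of that set. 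As for what each approach buys: the citation is shorter, whereas your proof is elementary and, taking $\Omega=\C$, it also furnishes exactly the disjoint case of Lemma \ref{geo-lemma2}, which the paper's proof of that lemma explicitly delegates to the present statement.
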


\section{The case where $G$ is a domain}\label{section-domain}
In this section, we consider two domains $G$ and $\Omega$, and a sequence $\Phi=(\phi_n)_n$ of holomorphic mappings from $G$ to $\Omega$. We will focus on the following definition.

\begin{defi}\label{defi-ori-domain}%Let $G$ and $\Omega$ be two domains, and $\Phi =(\phi_n)_n$ a sequence of holomorphic maps from $G$ to $\Omega$. 
We say that $f\in H(\Omega)$ is $(\Phi,G,\Omega)$-\textit{universal} if the set $\{f\circ \phi_n:\,n\in \N\}$ is dense in $H(G)$.
\end{defi}

This definition coincides with \cite[Definition 1.1 (a)]{G-Erdmann-Mortini} in the case where $G=\Omega$. The Birkhoff universality theorem provides a criterion for compositional universality which will be useful in the sequel. The proof is standard and left to the reader.

\begin{prop}\label{Birk}%Let $G$ and $\Omega$ be two domains, and $\Phi =(\phi_n)_n$ a sequence of holomorphic maps from $G$ to $\Omega$. 
There exists a $(\Phi,G,\Omega)$-universal function if and only if for any $\varepsilon>0$, any compact set $K$ in $G$, any compact set $L$ in $\Omega$, any function $g\in H(G)$ and any function $h\in H(\Omega)$, there exists $f\in H(\Omega)$ and $n\in \N$ such that
\[
\sup_{z\in K}|f\circ \phi_n(z) - g(z)|\leq \varepsilon \quad \text{and}\quad \sup_{z\in L}|f(z) - h(z)|\leq \varepsilon.
\]
\end{prop}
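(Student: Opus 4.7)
The plan is to apply the Birkhoff transitivity theorem to the sequence of composition operators
\[
C_{\phi_n}\colon H(\Omega)\to H(G),\qquad f\mapsto f\circ\phi_n.
\]
Each $C_{\phi_n}$ is continuous and linear for the locally uniform topologies, since $\phi_n$ carries any compact subset of $G$ to a compact subset of $\Omega$. A function $f\in H(\Omega)$ is $(\Phi,G,\Omega)$-universal exactly when its orbit $\{C_{\phi_n}(f):n\in\N\}$ is dense in $H(G)$. Writing
\[
U_{L,h,\varepsilon}=\{\tilde f\in H(\Omega):\sup\nolimits_L|\tilde f-h|<\varepsilon\},\quad V_{K,g,\varepsilon}=\{\tilde g\in H(G):\sup\nolimits_K|\tilde g-g|<\varepsilon\},
\]
the condition in the proposition is exactly the topological transitivity of $(C_{\phi_n})$, since these sets form neighbourhood bases of the two Fréchet topologies as $(L,K,h,g,\varepsilon)$ vary.

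The main direction (condition $\Rightarrow$ universal function exists) is a Baire category argument. Since $H(G)$ is separable and metrisable, fix a countable basis $(V_j)_j$ of its topology, of the form $V_{K_j,g_j,\eta_j}$ with $(K_j)_j$ an exhaustion of $G$, $(g_j)_j$ a dense sequence in $H(G)$, and $\eta_j\in\{1/m:m\geq 1\}$. Set
\[
A_j:=\bigcup_{n\in\N}C_{\phi_n}^{-1}(V_j)\subset H(\Omega).
\]
Each $A_j$ is open by continuity of the $C_{\phi_n}$. Applying the hypothesis with $K=K_j$, $g=g_j$, an arbitrary pair $(L,h)$, and $\varepsilon$ smaller than $\min(\eta_j,\delta)$ for a prescribed scale $\delta>0$ around $h$ on $L$ produces $f$ and $n$ that witness $A_j\cap U_{L,h,\delta}\ne\emptyset$; hence each $A_j$ is dense. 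Since $H(\Omega)$ is a Baire space, $\bigcap_j A_j$ is a dense $G_\delta$, in particular non-empty, and any $f$ in this intersection has orbit meeting every $V_j$, so $\{f\circ\phi_n\}_n$ is dense in $H(G)$; such an $f$ is $(\Phi,G,\Omega)$-universal.

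For the converse, one invokes the standard complement to Birkhoff's theorem: if a universal function exists at all, then the set of universal functions is a dense $G_\delta$ in $H(\Omega)$. Given data $(\varepsilon,K,L,g,h)$, density of universal functions yields an $f$ with $\sup_L|f-h|\leq\varepsilon$, and density of its orbit in $H(G)$ yields an $n$ with $\sup_K|f\circ\phi_n-g|\leq\varepsilon$. The only delicate point is the matching between the given pointwise condition and the density of every $A_j$, which requires choosing the parameters $(K_j,g_j,\eta_j)$ finer than $(K,g,\varepsilon)$; beyond that, the proof is formal, which justifies leaving it to the reader.
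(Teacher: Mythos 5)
Your forward direction (topological transitivity of $(C_{\phi_n})$ implies that the universal functions form a dense $G_\delta$, in particular exist) is correct and is the standard Baire argument that the paper presumably intends by ``left to the reader''. The gap is in the converse. You reduce it to the claim that \emph{if one universal function exists, then the set of universal functions is dense}, and you present this as ``the standard complement to Birkhoff's theorem''. But by your own Baire argument, density of the set of universal functions is \emph{equivalent} to transitivity, which is exactly what the converse asks you to prove; so as written the converse begs the question. Worse, the invoked ``complement'' is not a general fact about sequences of operators: it holds for the iterates $T^n$ of a single operator because the orbit of a universal vector is a dense set of universal vectors, but for a sequence $T_n\colon X\to Y$ with $X\neq Y$ there is no orbit in $X$ to exploit. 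For instance, $T_n\colon \C^2\to\C$, $T_n(\lambda,\mu)=\lambda a_n+\mu n$, where $(a_n)$ enumerates $\Q+i\Q$ with $|a_n|\leq \log(n+2)$, has universal vectors exactly on the punctured line $\{(\lambda,0):\lambda\neq 0\}$: non-empty but not dense, and the sequence is not transitive. So ``non-empty $\Rightarrow$ dense'' must be \emph{proved} for the operators $C_{\phi_n}\colon H(\Omega)\to H(G)$, and that is precisely the content of the missing direction.

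The converse does admit a direct argument, but it has to use the universal function itself to manufacture the separating data, not abstract nonsense. Let $f_0$ be universal and fix $\varepsilon,K,L,g,h$. Set $R=\sup_L|f_0|+1$, pick $M>R+\sup_K|g|+1$, and use universality to find $n$ with $\sup_K|f_0\circ\phi_n-(g+M)|\leq\varepsilon'$. Then $f_0(\phi_n(K))$ is contained in $\overline{D(M,\sup_K|g|+\varepsilon')}$, whose polynomial hull is disjoint from $\overline{D(0,R)}\supset f_0(L)$; by Lemma~\ref{lem-union-omega-convex} and Runge's theorem \emph{in the value plane of $f_0$} there is a polynomial $\theta$ with $|\theta-1|\leq\eta$ on $\overline{D(0,R)}$ and $|\theta|\leq\eta$ on $\widehat{f_0(\phi_n(K))}$. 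The function
\[
f:=h\cdot(\theta\circ f_0)+(f_0-M)\cdot\bigl(1-\theta\circ f_0\bigr)
\]
lies in $H(\Omega)$, satisfies $\sup_L|f-h|\leq\eta\,(\sup_L|h|+\sup_L|f_0|+M)$, and on $K$ one has $f\circ\phi_n=(f_0\circ\phi_n-M)+(h\circ\phi_n-f_0\circ\phi_n+M)\,\theta\circ f_0\circ\phi_n$, which is within $\varepsilon'+\eta\,C_n$ of $g$; choosing $\eta$ last (after $n$) gives both estimates. Note that no injectivity of $\phi_n$ and no $\Omega$-convexity of $\phi_n(K)\cup L$ are needed, because all the polynomial approximation takes place on compact subsets of $\C$ controlled by $|f_0|$. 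Your proof needs this (or an equivalent) argument in place of the appeal to a ``standard complement''.
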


%We introduce a definition and a notation which will be convenient for the statement of our result.

%\begin{defi}Let $\Omega$ be a demain. We say that a non-empty compact set $K$ in $\Omega$ is \emph{$\Omega$-connected} if it has connected complement whenever $\Omega$ is simply connected, and does not have connected complement if $\Omega$ is not simply connected.
%\end{defi}

%In this section, we will also denote by $\MM(\Omega)$ the set of all compact subsets of $\Omega$ which are connected, $\Omega$-connected, $\Omega$-convex and regular.
The following lemma is a direct consequence of the beginning of the proof of \cite[Theorem 3.12]{G-Erdmann-Mortini}.

\begin{lemme}\label{exhaustion-convenient}Let $\Omega\subset \C$ be a domain. There exists an exhaustion $(L_n)_n$ of $\Omega$ by compact sets in $\MM(\Omega)$.
\end{lemme}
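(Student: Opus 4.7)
The plan is to build the exhaustion in two stages: first produce a standard polygonal exhaustion of $\Omega$, and then modify each term so as to simultaneously enforce connectedness, $\Omega$-connectedness, $\Omega$-convexity and regularity.

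Fix $z_0\in\Omega$. For each $n\geq 1$ I would set $\tilde K_n:=\{z\in\C:\,|z-z_0|\leq n \text{ and }\dist(z,\C\setminus\Omega)\geq 1/n\}$, and let $K_n$ be the union of all closed squares of a grid of mesh $\leq 1/n^2$ centred at $z_0$ that are entirely contained in $\tilde K_n$. For $n$ large enough $K_n$ is a non-empty polyomino; choosing the mesh sufficiently small compared to $1/n$ rules out pinch points, so $\partial K_n$ is a finite disjoint union of polygonal Jordan curves and $K_n$ is regular. Clearly $(K_n)_n$ is a compact exhaustion of $\Omega$.

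Next, I would modify $K_n$ in two elementary steps. First, replace $K_n$ by its connected component $K_n'$ containing $z_0$. Second, let $L_n$ be the union of $K_n'$ with all its holes that are disjoint from $\C\setminus\Omega$: each such hole is a bounded open subset of $\Omega$, so $L_n$ remains a compact subset of $\Omega$. By construction, $L_n$ is connected, $\Omega$-convex, and regular (its boundary consists of the outer polygonal Jordan curve of $K_n'$ together with the polygonal Jordan curves bounding the unfilled holes).

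For $\Omega$-connectedness I would split into two cases. If $\Omega$ is simply connected, then $\C\setminus\Omega$ is either empty or connected and unbounded, so no bounded hole of $K_n'$ can contain a point of $\C\setminus\Omega$; thus every hole of $K_n'$ gets filled and $L_n$ has no hole. If $\Omega$ is not simply connected, fix a hole $H_0$ of $\Omega$ and a Jordan curve $\gamma\subset\Omega$ surrounding $H_0$. Connecting $z_0$ to $\gamma$ by an arc in $\Omega$ yields a compact connected subset of $\Omega$ which, for $n$ large, lies inside $\tilde K_n$ and hence inside a single component of $K_n$; thus $\gamma\subset K_n'$ and $H_0$ is contained in a hole of $K_n'$ meeting $\C\setminus\Omega$, which is preserved in $L_n$. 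Passing to a subsequence if necessary gives $L_n\subset\text{int}(L_{n+1})$, and $(L_n)_n$ is the desired exhaustion of $\Omega$ by elements of $\MM(\Omega)$.

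The main delicate point is the control of the topology through the grid discretisation: one needs the polyomino $K_n$ to be genuinely regular (no pinch points) and, in the multiply connected case, to have $z_0$ and the chosen curve $\gamma$ in the same connected component. Both issues are dealt with by taking the grid mesh substantially smaller than $1/n$: the former because $\tilde K_n$ is then well separated from its complement at a scale much larger than the mesh, the latter because any compact connected subset of $\Omega$ containing $z_0$ and $\gamma$ lies in $\tilde K_n$ for $n$ large, and is therefore captured by a single component of $K_n$.
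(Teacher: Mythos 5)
Your overall strategy is the standard one, and in fact it is essentially the construction that the paper relies on by citing the beginning of the proof of Theorem 3.12 of Grosse-Erdmann--Mortini: take a grid exhaustion, pass to the connected component of a base point, and fill in the holes that do not meet $\C\setminus\Omega$. Most of your verifications are correct: filling a hole disjoint from $\C\setminus\Omega$ keeps $L_n$ compact in $\Omega$ and yields $\Omega$-convexity, and the two-case treatment of $\Omega$-connectedness works (in the simply connected case note only that $\C\setminus\Omega$ need not be connected, but every component of it is unbounded, which is all your argument actually uses).

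There is, however, one step whose justification fails as stated: the claim that taking the mesh much smaller than $1/n$ rules out pinch points. A pinch point occurs at a grid vertex where exactly the two diagonally opposite squares are contained in $\tilde K_n$; whether this happens depends on the position of $\partial \tilde K_n$ relative to the grid, not on the ratio of the mesh to $1/n$, and one can place $\C\setminus\Omega$ so that it happens at every scale. Since regularity in the sense of the paper (and its later use, where one integrates over the individual components of $\partial K$) requires the boundary components to be genuine Jordan curves, pinch points must be removed explicitly --- for instance by excising from one of the two diagonal squares a small square cornered at the offending vertex, or by adjoining the two missing squares (which stay in $\Omega$, since every point of a square sharing a vertex with a square of $K_n$ is at distance at least $1/n-\sqrt{2}/n^2>0$ from $\C\setminus\Omega$). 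A second, smaller omission: once the components of $K_n$ other than $K_n'$ have been discarded, the exhaustion property of $(L_n)_n$ is no longer automatic; you need the connectedness of $\Omega$ to enlarge an arbitrary compact $K\subset\Omega$ to a compact \emph{connected} subset of $\Omega$ containing $z_0$, which is then absorbed by $K_m'$ for $m$ large. With these two repairs the argument is complete.
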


We immediately deduce from Proposition \ref{Birk} and the previous lemma the following fact.

\begin{fact}\label{fact-Birk-regular}%Let $G$ and $\Omega$ be two domains, and $\Phi =(\phi_n)_n$ a sequence of holomorphic maps from $G$ to $\Omega$. 
There exists a $(\Phi,G,\Omega)$-universal function if and only if for any $\varepsilon>0$, any compact set $K\in \MM(G)$, any compact set $L$ in $\MM(\Omega)$, any function $g\in H(G)$ and any function $h\in H(\Omega)$, there exists $f\in H(\Omega)$ and $n\in \N$ such that
\[
\sup_{z\in K}|f\circ \phi_n(z) - g(z)|\leq \varepsilon \quad \text{and}\quad \sup_{z\in L}|f(z) - h(z)|\leq \varepsilon.
\]
\end{fact}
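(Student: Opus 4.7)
The plan is essentially routine: Fact \ref{fact-Birk-regular} only differs from Proposition \ref{Birk} in that the compact sets $K$ and $L$ are restricted to lie in $\MM(G)$ and $\MM(\Omega)$ respectively, so the two statements should be shown equivalent by swallowing arbitrary compacts inside exhaustion members provided by Lemma \ref{exhaustion-convenient}.

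For the necessity direction (universality $\Rightarrow$ approximation property on $\MM(G)\times\MM(\Omega)$), I would simply invoke Proposition \ref{Birk}: since that proposition yields the simultaneous approximation for \emph{every} compact subsets $K\subset G$ and $L\subset\Omega$, it does so in particular when $K\in\MM(G)$ and $L\in\MM(\Omega)$. No further argument is needed.

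For the sufficiency direction, I would apply Lemma \ref{exhaustion-convenient} to both $G$ and $\Omega$ to obtain two exhaustions $(K_m)_m$ and $(L_m)_m$ with $K_m\in\MM(G)$ and $L_m\in\MM(\Omega)$ for every $m$. Given any $\varepsilon>0$, any compact $K\subset G$, any compact $L\subset\Omega$, any $g\in H(G)$ and any $h\in H(\Omega)$, by the defining property of an exhaustion one can pick $m\in\N$ large enough so that $K\subset K_m$ and $L\subset L_m$. Applying the hypothesis of Fact \ref{fact-Birk-regular} to the data $(K_m,L_m,\varepsilon,g,h)$ produces some $f\in H(\Omega)$ and $n\in\N$ such that
\[
\sup_{z\in K_m}|f\circ\phi_n(z)-g(z)|\leq\varepsilon \quad\text{and}\quad \sup_{z\in L_m}|f(z)-h(z)|\leq\varepsilon.
\]
Since $K\subset K_m$ and $L\subset L_m$, the same inequalities hold a fortiori on $K$ and $L$. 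Hence the criterion of Proposition \ref{Birk} is met, yielding the existence of a $(\Phi,G,\Omega)$-universal function.

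There is no real obstacle here; the only thing worth mentioning is that $\phi_n$ is automatically defined on the entire exhaustion set $K_m\subset G$ since by assumption each $\phi_n$ is a holomorphic map from $G$ to $\Omega$, so evaluating $f\circ\phi_n$ on $K_m$ raises no issue. This is why the authors label the result a \textbf{fact} and say it is an immediate consequence of Proposition \ref{Birk} and Lemma \ref{exhaustion-convenient}.
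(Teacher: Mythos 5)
Your proposal is correct and matches the paper's intent exactly: the authors give no written proof, stating only that the Fact is ``immediately deduced from Proposition \ref{Birk} and the previous lemma,'' which is precisely your argument of restricting for necessity and swallowing arbitrary compacts into exhaustion members from $\MM(G)$ and $\MM(\Omega)$ for sufficiency.
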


\medskip

The main theorem of this section states as follows.% characterizes the existence of $(\Phi,G,\Omega)$-universal function, depending on the geometry of $G$ and $\Omega$, for any sequence $\Phi$ that is eventually injective.

\begin{thm}\label{thm-nec-inj-one-infinity-holes}%Let $G$ and $\Omega$ be two domains. Let $\Phi =(\phi_n)_n$ be a sequence of eventually injective holomorphic mappings from $G$ to $\Omega$. 
Let us assume that $\Phi =(\phi_n)_n$ is eventually injective. There exists a $(\Phi,G,\Omega)$-universal function if and only if for every compact set $K\in \MM(G)$, every compact set $L\in \MM(\Omega)$, and every $N\in \N$, there exists $n\geq N$ such that $\phi_n(K)\cap L = \emptyset$ and $\phi_n(K)\cup L$ is $\Omega$-convex.
	
Moreover, if $G$ is not simply connected and $\Omega$ is not infinitely connected, then there do not exist $(\Phi,G,\Omega)$-universal functions.
\end{thm}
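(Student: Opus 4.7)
By Fact~\ref{fact-Birk-regular}, it suffices to verify the Birkhoff criterion for $K\in\MM(G)$, $L\in\MM(\Omega)$, $g\in H(G)$, $h\in H(\Omega)$ and $\varepsilon>0$. Using the hypothesis together with eventual injectivity, I would pick an $n\geq N$ such that $\phi_n$ is injective on a neighbourhood of $K$, $\phi_n(K)\cap L=\emptyset$, and $\phi_n(K)\cup L$ is $\Omega$-convex. Since $\phi_n$ is then a local biholomorphism near $K$, the image $\phi_n(K)$ is a regular, connected compact set homeomorphic to $K$, so the glued function $F:=g\circ(\phi_n|_K)^{-1}$ on $\phi_n(K)$, $F:=h$ on $L$, is well defined on the disjoint union, continuous, and holomorphic in its interior. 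Mergelyan's theorem (Theorem~\ref{Mergelyan}) then produces $f\in H(\Omega)$ with $\sup_{\phi_n(K)\cup L}|f-F|<\varepsilon$, which yields both required approximations after pulling back by $\phi_n$.

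\textbf{Necessity: Baire upgrade and disjointness.} For the converse, assume a $(\Phi,G,\Omega)$-universal function exists. A standard Baire category argument shows that the set of universal functions is a dense $G_\delta$ of $H(\Omega)$ and, as a consequence, Proposition~\ref{Birk} can be strengthened to: for every $N\in\N$ and every quintuple $(\varepsilon,K,L,g,h)$, there exist $n\geq N$ and $f\in H(\Omega)$ realising both approximations. Applying this upgraded criterion to $g\equiv 0$, $h\equiv 1$, $\varepsilon=1/3$ immediately gives some $n\geq N$ with $|f|<1/3$ on $\phi_n(K)$ and $|f|>2/3$ on $L$, forcing $\phi_n(K)\cap L=\emptyset$.

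\textbf{Necessity: $\Omega$-convexity.} This is the main obstacle. My plan is a Cauchy-integral obstruction: suppose, for contradiction, that for infinitely many $n\geq N$ the union $\phi_n(K)\cup L$ carries a hole $H_n\subset\Omega$. Any $f\in H(\Omega)$ then satisfies $\oint_{\partial H_n}f\,dz=0$ by Cauchy, whereas a suitably chosen glued target $F_n$ built from $g\circ\phi_n^{-1}$ on $\phi_n(K)$ and $h$ on $L$ can be arranged to carry a nonzero period around $\partial H_n$, much as $1/(z-z_0)$ would for $z_0\in H_n$; combined with the upgraded Birkhoff criterion, this produces a quantitative contradiction. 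The delicate point, which I expect to be the crux of the proof, is that the bad hole $H_n$ depends on $n$ while $(g,h)$ must be chosen once and for all. To overcome it, I would exploit the fact that $L$ is itself $\Omega$-convex (hence has only finitely many holes) and that each $\phi_n(K)$ is confined to $\Omega\setminus L$: this reduces the possible ``shapes'' of bad holes to a controlled family, and $(g,h)$ can be constructed by a Mergelyan-type recipe on a sufficiently large $\Omega$-convex auxiliary set so that every admissible bad-hole configuration produces a uniform Cauchy-integral contradiction, possibly after extracting a subsequence with a limiting geometry.

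\textbf{The ``moreover'' part.} Since $G$ is not simply connected, every $K\in\MM(G)$ has at least one hole; since $\Omega$ is not infinitely connected, one may pick $L\in\MM(\Omega)$ whose finitely many holes collectively contain all holes of $\Omega$. Eventual injectivity forces $\phi_n(K)$ to inherit a hole, and combining this with $\phi_n(K)\cap L=\emptyset$ leads to a pigeonhole argument: whatever the relative position of $\phi_n(K)$ and $L$ (nested in each other's polynomial hull or disjoint from it), the union $\phi_n(K)\cup L$ produces at least one more hole than $\Omega$ has, so at least one hole of $\phi_n(K)\cup L$ contains no point of $\C\setminus\Omega$. By the main equivalence just established, no $(\Phi,G,\Omega)$-universal function can exist.
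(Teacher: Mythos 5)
Your sufficiency argument, the derivation of $\phi_n(K)\cap L=\emptyset$, and the pigeonhole argument for the ``moreover'' part are all sound and close to what the paper does. The genuine gap is the heart of the necessity direction: proving that $\phi_n(K)\cup L$ can be taken $\Omega$-convex. You present this only as a plan, and the plan as stated would not go through. First, the period $\oint_{\partial H_n}F_n\,dz$ of your glued target over a boundary curve lying in $\phi_n(K)$ equals, after the change of variables $z=\phi_n(w)$, an integral of $g(w)\phi_n'(w)$ over a curve in $K$; it therefore depends on the uncontrolled derivative $\phi_n'$, so you cannot choose $g$ once and for all so that the period is uniformly bounded away from $0$, and the error $\varepsilon\cdot\mathrm{length}(\partial H_n)$ is likewise uncontrolled. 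The paper avoids exactly this by working with the logarithmic derivative $f'/f$: the relevant integrals are winding numbers, hence integers, and they are invariant under composition with $\phi_n$. Concretely, it takes targets $g_m(z)=m(z-b)^{p+1}/\prod_i(z-\lambda_i)$ with $\lambda_i$ in the holes of $K$ (hence of $G$), so that the argument principle forces $\phi_n(K)$ itself to be $\Omega$-convex and $|f\circ\phi_n|$ to dominate $|f|$ on $L$.

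Second, and more structurally, your reduction ``to a controlled family of bad holes'' misses the actual failure mode: even when $\phi_n(K)$ and $L$ are both $\Omega$-convex and disjoint, their union can fail to be $\Omega$-convex because one sits inside a hole of the other and the residual annular region contains no hole of $\Omega$ (this is the content of Lemma~\ref{lemme-one-infinity-holes}). Handling this is the bulk of the paper's proof: one iterates the construction, replacing $L$ by successively larger compact sets $L_1,L_2,\dots$ that absorb the offending regions; since $\C\setminus L$ has only finitely many components the iteration terminates, and in the remaining bad case two images $\phi_{m_{k_1}}(K)$ and $\phi_{m_{k_2}}(K)$ are nested, whereupon the argument principle applied to the region between them yields a zero count of $-2$, a contradiction. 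None of this is recoverable from your sketch, so the necessity of the $\Omega$-convexity of the union remains unproved.
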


We need a lemma.

\begin{lemme}\label{lemme-one-infinity-holes}Let %$\Omega$ be a domain, and let 
$L$ and $L'$ be two disjoint connected $\Omega$-convex compact sets. Then $L\cup L'$ is $\Omega$-convex if and only if one of the following conditions holds:
\begin{enumerate}
	\item $L\subset \C\setminus \widehat{L'}$ and $L'\subset \C\setminus \widehat{L}$;
	\item there exists a hole $O$ of $L$ such that $L'\subset O$ and $O\setminus \widehat{L'}$ contains a hole of $\Omega$;
	\item there exists a hole $O'$ of $L'$ such that $L\subset O'$ and $O'\setminus \widehat{L}$ contains a hole of $\Omega$.
\end{enumerate}
\end{lemme}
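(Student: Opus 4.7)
The natural approach is a case analysis on the relative position of $L$ and $L'$ in the plane. Since $L$ and $L'$ are disjoint connected compacta, $L$ lies in a single connected component of $\C\setminus L'$ and vice versa; moreover, a planar encircling argument rules out $L\subset\widehat{L'}$ and $L'\subset\widehat{L}$ simultaneously. This leaves three mutually exclusive geometric configurations, corresponding to the three conditions of the lemma: either (i) each of $L,L'$ sits in the unbounded component of the other's complement, or (ii) $L'$ sits inside some hole $O$ of $L$, or (iii) $L$ sits inside some hole $O'$ of $L'$.

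Case (i) is precisely condition~(1), and Lemma~\ref{lem-union-omega-convex} directly yields the $\Omega$-convexity of $L\cup L'$. Cases (ii) and (iii) being symmetric, the substance of the argument lies in (ii). The first sub-step is to show $\widehat{L'}\subset O$: since $L$ is connected and compact, $O$ is simply connected and $\C\setminus O$ is connected, unbounded, and disjoint from $L'$, which forces each hole of $L'$ into $O$. The second sub-step is the decomposition
\[
\C\setminus(L\cup L')=U_\infty\sqcup\bigsqcup_{i\geq 1}O_i\sqcup(O\setminus L'),
\]
where $U_\infty$ is the unbounded component of $\C\setminus L$ and $O=O_0,O_1,O_2,\ldots$ enumerate the holes of $L$. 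The $O_i$ with $i\geq 1$ are $\Omega$-convex by the $\Omega$-convexity of $L$; the components of $O\setminus L'$ split further into the holes of $L'$ (each $\Omega$-convex by the $\Omega$-convexity of $L'$) and the connected components of $O\setminus\widehat{L'}$.

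The final ingredient is an auxiliary observation: a hole $U$ of $L\cup L'$ contains a point of $\C\setminus\Omega$ if and only if $U$ contains an entire hole of $\Omega$. Indeed, the component of $\C\setminus\Omega$ through such a point is connected, disjoint from $L\cup L'\subset\Omega$, and cannot cross $\partial U\subset L\cup L'$, so it is trapped in the bounded set $U$, hence is itself bounded, hence a hole of $\Omega$. Combining this with the decomposition above reduces the $\Omega$-convexity of $L\cup L'$ to the requirement that $O\setminus\widehat{L'}$ contains a hole of $\Omega$, which is exactly condition~(2). The main obstacle is the precise bookkeeping of the decomposition, and in particular the identification of the bounded components of $O\setminus L'$ with the holes of $L'$ together with the connected components of $O\setminus\widehat{L'}$; once this is in place, each direction of the equivalence follows from the two auxiliary observations above.
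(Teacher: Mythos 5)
Your argument follows the paper's proof almost step for step: the trichotomy of configurations, the use of Lemma~\ref{lem-union-omega-convex} to settle configuration~(1), the containment $\widehat{L'}\subset O$, and the identification of the holes of $L\cup L'$ as the holes of $L$ other than $O$, the holes of $L'$, and the residual region $O\setminus\widehat{L'}$. There is, however, one step at which your writeup does not close, and it is precisely the step on which the equivalence with condition~(2) hinges. You decompose $O\setminus L'$ into the holes of $L'$ together with \emph{the connected components} of $O\setminus\widehat{L'}$ --- that is, you allow this last region to be disconnected --- and yet you conclude that the $\Omega$-convexity of $L\cup L'$ reduces to the single requirement that $O\setminus\widehat{L'}$ contain a hole of $\Omega$. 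If $O\setminus\widehat{L'}$ had two or more components, each of them would be a separate hole of $L\cup L'$, and $\Omega$-convexity would demand that \emph{every one} of them contain a hole of $\Omega$, which is strictly stronger than condition~(2). So, as written, your final reduction does not follow from your own decomposition; the lemma would in fact be false without the connectedness of $O\setminus\widehat{L'}$, which is exactly the point the paper records (parenthetically, as ``the (connected) set $O'\setminus\widehat{L}$'') and which you must supply.

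The missing fact does hold. Since $L$ is a connected compact set, its complementary component $O$ is simply connected, so $A:=(\C\setminus O)\cup\{\infty\}$ is a closed connected subset of the sphere with connected complement $O$; on the other hand $B:=\widehat{L'}$ is a compact connected set, disjoint from $A$, whose complement in the sphere is also connected. Janiszewski's theorem (or a Mayer--Vietoris computation for the open cover $\C=O\cup\bigl(\C\setminus\widehat{L'}\bigr)$, using $H_1(\C)=0$ and the connectedness of $O$ and of $\C\setminus\widehat{L'}$) then gives that $O\setminus\widehat{L'}=O\cap\bigl(\C\setminus\widehat{L'}\bigr)$ is connected. With this one addition your proof is complete and coincides with the paper's.
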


\begin{proof}Let assume that (1) holds. Then the set of holes of $L\cup L'$ is the union of the sets of all holes of $L$ and $L'$. Since both $L$ and $L'$ are $\Omega$-convex, each hole of $L\cup L'$ contains a hole of $\Omega$, which means that $L\cup L'$ is $\Omega$-convex (the same remains true if $L$ or $L'$ does not have holes).
	
Since $L$ and $L'$ are disjoint and connected, if (1) does not hold then $\widehat{L}$ is contained in a hole of $L'$, or $\widehat{L'}$ is contained in a hole of $L$. The two situations being symmetrical, let us assume that $\widehat{L}$ is contained in a hole of $L'$, that we call $O'$.  Then, again by connectivity, the set of holes of $L\cup L'$ consists in the union of the holes of $L$ and $L'$, except $O'$, and the (connected) set $O'\setminus \widehat{L}$. Since the holes of $L$ and $L'$ each contains a hole of $\Omega$, the set $L\cup L'$ is thus $\Omega$-convex if and only if $O'\setminus \widehat{L}$ contains a hole of $\Omega$.
\end{proof}

Let us turn to the proof of Theorem \ref{thm-nec-inj-one-infinity-holes}.

\begin{proof}[Proof of Theorem \ref{thm-nec-inj-one-infinity-holes}]The if part is straightforward. It simply consists in applying Fact \ref{fact-Birk-regular}, by using Runge's theorem  on $\phi_{n}(K)\cup L$ to the function
\[
\kappa(z)=\left\{
\begin{array}{ll}h(z) & \text{if }z\in L\\
g\circ \phi_n^{-1}(z) & \text{if }z\in \phi_{n}(K).
\end{array}
\right.
\]

\medskip

Let us start with the necessity condition. Let us fix $N\in\N$, $K\subset G$ and $L\subset \Omega$ as in the statement. Note that $K$ and $L$ have only finitely many holes \cite[Lemma 3.10]{G-Erdmann-Mortini}.

We first assume that $G$ is not simply connected. Then since $K$ is $\Omega$-connected, it has $p\geq 1$ holes. Let us denote by $\gamma_0$ the outer boundary of $K$ and by $\gamma_i$, $1\leq i\leq p$, the connected components of the inner boundary of $K$. We assume that $\gamma_0$ is positively oriented and that $\gamma_i$, $1\leq i\leq p$, is negatively oriented. Consider $p$ Jordan arcs $I_i$, $1\leq i\leq p$, in $G$ such that one extremity of $I_i$ is in $\gamma_0$ and the other is in $\gamma_i$. Since $K$ is connected, we may and shall assume that the $I_i$'s are contained in $K$ and pairwise disjoint. Let us fix $b\in K\setminus \cup_{i=1}^pI_i$ and $\lambda _i$, $1\leq i\leq p$, in the hole of $K$ bounded by $\gamma_i$. Since $K$ is $G$-convex, we may and shall assume that each $\lambda_i$ lies in some hole of $G$. We now consider the functions
	\[
	g_m(z)=m\frac{(z-b)^{p+1}}{\prod_{i=1}^p(z-\lambda_i)},\quad m\in \N.
	\]
They all belong to $H(\Omega)$. By the argument principle, we have for every $m\in \N$,
\begin{equation}\label{eq1-one-infinity}
\frac{1}{2i\pi}\int_{\gamma_0}\frac{g_m'(z)}{g_m(z)}dz = \frac{1}{2i\pi}\int_{\gamma_i}\frac{g_m'(z)}{g_m(z)}dz=1,\quad i=1\ldots p.
\end{equation}
Let $f\in H(\Omega)$ be a $(\Phi,G,\Omega)$-universal function. For every $m\in \N$, there exists a sequence $(n_k)_k$ such that $f\circ \phi_{n_k} \to g_m $ in $H(G)$ and $\displaystyle{\frac{(f\circ \phi_{n_k})'}{f\circ \phi_{n_k}} \to \frac{g_m'}{g_m}}$ in $H(G\setminus \{b\})$. Then by definition of $g_m$ and by \eqref{eq1-one-infinity}, and upon choosing $m$ large enough, there exists an integer $m_1\geq N$ such that $\phi_{m_1}$ is injective in a neighbourhood of $K$,
\begin{equation}\label{eq2-one-infinity}
\min _{z\in \partial K \cup \bigcup_{i=1}^p I_i}|f\circ \phi_{m_1}(z)| > \max_{z\in L}|f(z)|
\end{equation}
and
\begin{equation}\label{eq3-one-infinity}
\frac{1}{2i\pi}\int_{\phi_{m_1}(\gamma_0)}\frac{f'(z)}{f(z)}dz = \frac{1}{2i\pi}\int_{\phi_{m_1}(\gamma_i)}\frac{f'(z)}{f(z)}dz=1, \quad i=1,\ldots ,p.
\end{equation}
Equation \eqref{eq3-one-infinity} implies that $\phi_{m_1}(K)$ is $\Omega$-convex. Indeed, assume by contradiction that it is not. Since $\phi_{m_1}$ is injective on $K$, $\phi_{m_1}(K)$ has $p$ holes, and $f$ is holomorphic on one of them. The boundary of this hole is $\phi_{m_1}(\gamma_i)$ for some $i=0,\ldots,p$ and is negatively oriented, since injective holomorphic mappings send boundaries to boundaries and preserve orientation. It follows from the argument principle and \eqref{eq3-one-infinity} that $f$ has $-1$ zero in this hole, which is impossible. A straight consequence of this fact is that $\Omega$ cannot be simply connected (in other words, there do not exist $(\Phi,G,\Omega)$-universal functions whenever $G$ is not simply connected and $\Omega$ is simply connected).

To finish with the case where $G$ is not simply connected, we thus assume from now on that $\Omega$ is not simply connected. In particular, since $L\in \MM(\Omega)$, $L$ has at least one hole. Now, Equation \eqref{eq2-one-infinity} clearly implies that $\phi_{m_1}(\partial K\cup \bigcup_{i=1}^p I_i) \cap L=\emptyset$. Since again injective holomorphic mappings send boundaries to boundaries, and since $L$ is connected, has at least one hole and is $\Omega$-convex, we infer that $\phi_{m_1}(K) \cap L=\emptyset$.

If $\phi_{m_1}(K) \cup L$ is $\Omega$-convex, then we are done. Assume that it is not the case. Since $L$ and $\phi_{m_1}(K)$ are connected, then by Lemma \ref{lemme-one-infinity-holes}, either $L$ is contained in a hole - let say $O_1$ - of $\phi_{m_1}(K)$, or $\phi_{m_1}(K)$ is contained in a hole - let say $O'_1$ - of $L$, and $O_1\setminus \widehat{L}$ and $O'_1\setminus \widehat{\phi_{m_1}(K)}$ contain no point of the complement of $\Omega$. Applying exactly the same argument as above with $L$ replaced with the compact set
\[
L_1:=L\cup \phi_{m_1}(K) \cup (O_1\setminus \widehat{L}) \cup (O'_1\setminus \widehat{\phi_{m_1}(K)}),
\]
we get the existence of an integer $m_2>m_1$ such that $\phi_{m_2}$ is injective on a neighbourhood of $K$, $\phi_{m_2}(K)$ is $\Omega$-convex, $\phi_{m_2}(K)\cap L_1=\emptyset$ and \eqref{eq3-one-infinity} holds with $m_2$ instead of $m_1$.

Since $L\subset L_1$ and the holes of $L_1$ are contained in that of $L$ by assumption, if $\phi_{m_2}(K) \cup L_1$ is $\Omega$-convex, then $\phi_{m_2}(K) \cup L$ is also $\Omega$-convex. So if this is the case, then we are done. If not, by Lemma %either $L_1$ and $\phi_{m_2}(K)$ are contained in the unbounded connected complement of the other, or one of these two sets is contained in a hole of the other. In the first case, since $L$ is contained in $L_1$, we deduce from Lemma \ref{lemme-one-infinity-holes} that $L\cup \phi_{m_2}(K)$ is $\Omega$-convex, and we are done.
%In the other case, then again by Lemma 
\ref{lemme-one-infinity-holes}, then either $\phi_{m_2}(K)$ is contained in a hole $O'_2$ of $L_1$, or $L_1$ is contained in a hole $O_2$ of $\phi_{m_2}(K)$, and $O_2\setminus \widehat{L_1}$ and $O'_2\setminus \widehat{\phi_{m_2}(K)}$ contain no point of the complement of $\Omega$. So we can again reproduce the same step with $L_1$ replaced with
\[
L_2:=L_1\cup \phi_{m_2}(K) \cup (O_2\setminus \widehat{L}) \cup (O'_2\setminus \widehat{\phi_{m_2}(K)}).
\]
Doing so, since the complement of $L$ has only finitely many connected components, we can exhibit, after finitely many steps of such construction, a pair of integers $(m_{k_1},m_{k_2})$ with $m_{k_1}<m_{k_2}$ such that, for $i=1,2$, $\phi_{m_{k_i}}$ is injective on a neighbourhood of $K$, $\phi_{m_{k_i}}(K)$ is $\Omega$-convex, $\phi_{m_{k_i}}(K)\cap L =\emptyset$, \eqref{eq3-one-infinity} holds with $m_{k_i}$ instead of $m_1$, and such that one of the following holds:
\begin{enumerate}[(a)]
	\item $\phi_{m_{k_1}}(K)\cup L$ or $\phi_{m_{k_2}}(K)\cup L$ is $\Omega$-convex;
	\item $\phi_{m_{k_1}}(K)\cup L$ and $\phi_{m_{k_2}}(K)\cup L$ are not $\Omega$-convex, and $\phi_{m_{k_2}}(K)$ and $\phi_{m_{k_1}}(K)$ are contained in a hole of the other.
\end{enumerate}
In Case (a) we have the conclusion of the theorem. Let us then focus on Case (b) and let us assume that $\phi_{m_{k_2}}(K)$ is contained in a hole $O_{k_1}$ of $\phi_{m_{k_1}}(K)$. The proof is similar in the other situation. Then, by construction, $\phi_{m_{k_1}}(K)$ and $\phi_{m_{k_2}}(K)$ are contained in a same hole of $L$. Thus, since $\phi_{m_{k_2}}(K)\cup L$ is not $\Omega$-convex, the set $O_{k_1}\setminus \phi_{m_{k_2}}(K)$ contains no point of the complement of $\Omega$. In particular, $f$ is holomorphic in a neighbourhood of $O_{k_1}\setminus \phi_{m_{k_2}}(K)$. Using one more time that injective holomorphic mappings preserve orientation, we get that the outer boundary $\Gamma_1$ of $O_{k_1}\setminus \phi_{m_{k_2}}(K)$ is $\phi_{m_{k_1}}(\gamma_i)$ for some $i=0,\ldots ,p$ and is negatively oriented, while its inner boudary $\Gamma_0$ is the outer boundary of $\phi_{m_{k_2}}(K)$, which is $\phi_{m_{k_2}}(\gamma_i)$ for some $i=0,\ldots ,p$ and is positively oriented.

It then follows that the quantity $-\int_{\Gamma_0+\Gamma_1}\frac{f'(z)}{f(z)}dz$ is equal to the number of zeroes of $f$ in $O_{k_1}\setminus \phi_{m_{k_2}}(K)$. Yet this quantity is also equal to $-2$ by \eqref{eq3-one-infinity} applied with $m_{k_i}$, $i=1,2$, instead of $m_1$. This shows that Case (a) occurs.

\medskip

Before turning to the case where $G$ is simply connected, let us prove the last assertion of the theorem. To do so, we thus assume that $\Omega$ is not infinitely connected. It is enough to see that, in the previous construction, Case (a) never occurs for some choice of the compact set $L$. Let us denote by $1\leq q < +\infty$ the number of holes of $\Omega$, and let us assume that $L$ has exactly $q$ holes. Then, since $\phi_{m_i}(K) \cap L=\emptyset$ for $i=1,\ldots, k_2$, the set $\phi_{m_i}(K) \cup L$ has at least $p+q > q$ holes and one of this hole does not contain any point of the complement of $\Omega$. Hence, for any $i=1,\ldots, k_2$, $\phi_{m_i}(K) \cup L$ is not $\Omega$-convex and Case (b) occurs. As we have seen, it leads to a contradiction.

\medskip

To conclude the necessity part, let us consider the easy case where $G$ is simply connected. Let $K$ and $L$ as in the statement. In particular, $K$ has connected complement. Let $f\in H(\Omega)$ be a $(\Phi,G,\Omega)$-universal function and consider $M\in \R$ such that $\varepsilon:=M - \sup_{z\in L}|f(z)|>0$. By universality, there exists an integer $n\geq N$ such that $\phi_n$ is injective in a neighbourhood of $K$ and
\[
\sup_{z\in K}|f\circ \phi_n(z)-M| < \varepsilon.
\]
This implies that $\phi_n(K)$ has connected complement and that $\phi_n(K)\cap L=\emptyset$. Hence $\phi_n(K) \cup L$ is $\Omega$-convex.% by Lemma \ref{lem-union-omega-convex}.
\end{proof}

\begin{rem}\label{rem-two-holes-proof-last-assertion-thm}{\rm We observe that if $G$ has at least two holes and $\Omega$ is not infinitely connected, then for any compact set $K\in \MM(G)$ and any compact set $L\in \MM(\Omega)$, there cannot exist $n\in \N$ such that $\phi_n(K)$ is $\Omega$-convex and $\phi_n(K)\cap L=\emptyset$. Indeed, as in the proof of the last assertion of Theorem \ref{thm-nec-inj-one-infinity-holes}, this would imply that $\Omega$ has at least $q+p-1>q$ holes, which is impossible.
}
\end{rem}

The previous proof makes it appear that if there exists a $(\Phi,G,\Omega)$-universal function, then for any $N\in \N$, any compact set $K\in \MM(G)$ and any compact set $L\in \MM(\Omega)$ (hence also for any arbitrary compact set $L$ in $\Omega$, by Lemma \ref{exhaustion-convenient}), there exists an integer $n\geq N$ such that $\phi_n(K)$ is $\Omega$-convex and $\phi_n(K)\cap L=\emptyset$. In the particular case where $G$ is simply connected, we used the fact that these properties automatically imply that $\phi_n(K) \cup L$ is $\Omega$-convex. Thus, in this case, there exist $(\Phi,G,\Omega)$-universal functions if and only if for any $N\in \N$, any compact set $K\in \MM(G)$ and any compact set $L\in \MM(\Omega)$, there exists an integer $n\geq N$ such that $\phi_n(K)$ is $\Omega$-convex and $\phi_n(K)\cap L=\emptyset$.

\medskip

The following corollary shows that this equivalence holds not only if $G$ is simply connected.

\begin{coro}\label{coro-iff-simply-two-holes}%Let $G$ and $\Omega$ be two domains. Let $\Phi =(\phi_n)_n$ be a sequence of \textit{eventually injective} holomorphic mappings from $G$ to $\Omega$.
We assume that $\Phi =(\phi_n)_n$ is eventually injective and we make one of the following assumptions:
\begin{enumerate}
	\item $G$ is simply connected;
	\item $G$ has at least two holes.
\end{enumerate}
Then there exists a $(\Phi,G,\Omega)$-universal function if and only if for every compact set $K\in \MM(G)$, every compact set $L\in\Omega$, and every $N\in \N$, there exists $n\geq N$ such that $\phi_n(K)\cap L = \emptyset$ and $\phi_n(K)$ is $\Omega$-convex.
\end{coro}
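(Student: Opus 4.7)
The corollary is an equivalence. The ``only if'' direction is immediate from the discussion preceding the statement: the proof of Theorem~\ref{thm-nec-inj-one-infinity-holes} yields, whenever a $(\Phi,G,\Omega)$-universal function exists, integers $n\geq N$ with $\phi_n(K)$ $\Omega$-convex and disjoint from $L$, for any $K\in\MM(G)$ and any $L\in\MM(\Omega)$; Lemma~\ref{exhaustion-convenient} extends this to arbitrary compact $L\subset\Omega$. For the ``if'' direction, by Theorem~\ref{thm-nec-inj-one-infinity-holes} it is enough to upgrade the hypothesis of the corollary to ``$\phi_n(K)\cup L$ is $\Omega$-convex and $\phi_n(K)\cap L=\emptyset$'' for some $n\geq N$, under assumption~(1) or~(2).

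Under~(1), since $G$ is simply connected, $K\in\MM(G)$ has no hole, and by eventual injectivity neither does $\phi_n(K)$; in particular $\widehat{\phi_n(K)}=\phi_n(K)$. The hypothesis yields $n\geq N$ with $\phi_n(K)\cap L=\emptyset$ and $\phi_n(K)$ $\Omega$-convex. In Lemma~\ref{lemme-one-infinity-holes} applied to $\phi_n(K)$ and $L$, condition~(3) is vacuous. If $\phi_n(K)\cap\widehat L=\emptyset$, condition~(1) holds. Otherwise $\phi_n(K)$ lies in some hole $O$ of $L$, and the hole of $\Omega$ contained in $O$ (provided by the $\Omega$-convexity of $L$) lies in $O\setminus\phi_n(K)=O\setminus\widehat{\phi_n(K)}$ since $\phi_n(K)\subset\Omega$; hence condition~(2) holds. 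In both cases $\phi_n(K)\cup L$ is $\Omega$-convex.

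Under~(2), Remark~\ref{rem-two-holes-proof-last-assertion-thm}, applied to any element of $\MM(G)$ with at least two holes (such as a sufficiently large member of an $\MM(G)$-exhaustion of $G$), shows that the hypothesis forces $\Omega$ to be infinitely connected. We then mimic the iterative construction in the proof of Theorem~\ref{thm-nec-inj-one-infinity-holes}: starting from $L^{(0)}=L$, we apply the hypothesis to successively produce $n_1<n_2<\cdots$, each time enlarging $L^{(k)}$ to $L^{(k+1)}$ by absorbing the ``bad'' region witnessing the failure of $\Omega$-convexity of $\phi_{n_k}(K)\cup L^{(k)}$. Finiteness of the components of $\C\setminus L$ guarantees termination after finitely many steps, yielding either Case~(a) (some $\phi_{n_i}(K)\cup L$ is $\Omega$-convex, which concludes the proof) or Case~(b) (two nested images lie in a common hole of $L$). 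The main obstacle is excluding Case~(b) without invoking a universal function, as was done in the proof of Theorem~\ref{thm-nec-inj-one-infinity-holes} via the argument principle. To overcome this we plan to exploit the infinite connectivity of $\Omega$: by initially enlarging $L$ to some $L^*\in\MM(\Omega)$ surrounding many additional holes of $\Omega$ outside $\widehat L$ (possible thanks to the infinite connectivity), the nested configuration of Case~(b) becomes combinatorially incompatible with the $\Omega$-convexity of the intermediate images $\phi_{n_i}(K)$, so that the iteration is forced to terminate in Case~(a).
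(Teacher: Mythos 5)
Your treatment of the ``only if'' direction and of case (1) is correct and matches the paper: for $G$ simply connected, the hypothesis plus eventual injectivity gives $\widehat{\phi_n(K)}=\phi_n(K)$, and Lemma \ref{lemme-one-infinity-holes} (or the discussion preceding the corollary) yields the $\Omega$-convexity of $\phi_n(K)\cup L$ for free. The reduction of case (2) to $\Omega$ infinitely connected via Remark \ref{rem-two-holes-proof-last-assertion-thm} is also as in the paper.

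The rest of case (2), however, contains a genuine gap: the exclusion of your Case (b) is exactly the heart of the matter, and you do not prove it --- you only announce a ``plan''. Moreover, the plan as stated does not work: enlarging $L$ to an $L^*$ whose hull surrounds many extra holes of $\Omega$ creates no combinatorial obstruction to the nested configuration. Indeed, a connected $\Omega$-convex set with $p\geq 2$ holes, each containing a hole of $\Omega$, can perfectly well sit inside a hole $O$ of another such set with the intermediate region $O\setminus\widehat{\phi_{n_{k_2}}(K)}$ free of holes of $\Omega$; hole-counting alone never contradicts this, which is precisely why the proof of Theorem \ref{thm-nec-inj-one-infinity-holes} had to invoke the argument principle applied to a universal function --- a tool unavailable here. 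The paper's proof takes a different and essential turn that your proposal misses: it enlarges \emph{both} sets, applying the hypothesis to exhaustion elements $K_l\supset K$ in $\MM(G)$ and $L_l\supset L$ in $\MM(\Omega)$ (so that for $l$ large both $K_l$ and $L_l$ have at least two holes, and $\phi_{n_l}(K_l)$ must avoid the large set $L_l$), runs the geometric argument of \cite[Lemma 3.13]{G-Erdmann-Mortini} on the pair $\bigl(\phi_{n_l}(K_l),L_l\bigr)$ to get $\Omega$-convexity of their union, and then descends to the original pair $(K,L)$ via Lemmata \ref{Lemma3.11} and \ref{lemme-tout-compact}. Your iteration never enlarges $K$ and never uses Lemma \ref{lemme-tout-compact}, so it cannot reach this conclusion; as written, the proof of case (2) is incomplete.
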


\begin{proof}The "if" part when $G$ is simply connected and the "only if" part in both cases (1) and (2) have already been explained before the statement. Let us then focus on the "if" part when $G$ has at least two holes, and let us fix a compact set $K\in \MM(G)$, a compact set $L\in \MM(\Omega)$, and $N\in \N$. By Theorem \ref{thm-nec-inj-one-infinity-holes}, it is enough to show that if $\phi_n(K)\cap L = \emptyset$ and $\phi_n(K)$ is $\Omega$-convex for some $n\geq N$, then $\phi_n(K)\cup L$ is $\Omega$-convex. By Remark \ref{rem-two-holes-proof-last-assertion-thm}, we may and shall assume that $\Omega$ is infinitely connected (and in particular has more than two holes). We can proceed exactly as in the proof of \cite[Lemma 3.13]{G-Erdmann-Mortini}: consider an exhaustion $(K_l)_l$ of $G$ by compact sets in $\MM(G)$, with $K_1=K$, and an exhaustion $(L_l)_l$ of $\Omega$ by compact sets in $\MM(\Omega)$, with $L_1=L$. We use the hypothesis to get an increasing sequence $(n_l)_l\subset \N$ such that for any $l\in \N$, $\phi_{n_l}$ is injective on a neighbourhood of $K_l$, $\phi_{n_l}(K_l)$ is $\Omega$-convex and $\phi_{n_l}(K_l)\cap L_l =\emptyset$ (which implies $\phi_{n_l}(K)\cap L=\emptyset$); then we reproduce the same geometric argument as in the proof of \cite[Lemma 3.13]{G-Erdmann-Mortini}, with $\phi_{n_l}(K) \cup K$ replaced by $\phi_{n_l}(K) \cup L$, to get that $\phi_{n_l}(K)\cup L$ is $\Omega$-convex (here we use the assumption that $K$ and $L$ have at least two holes). The details are left to the reader.
\end{proof}

%By Theorem \ref{thm-nec-inj-one-infinity-holes}, if $G\subset \Omega$ and $G$ is doubly connected, there exists no universal inventually injective sequence $(\phi_n)_n$ with $\phi_n:G\to \Omega$ holomorphic, $n\in \N$. Yet, there exist (non-empty) domains $\Omega' \subset G$ and universal sequences $(\phi_n)_n$ of injective holomorphic mappings from $G$ to $\Omega'$. To define such $\Omega'$ one can for instance remove infinitely many suitable discs in $G$. Then one can consider functions $\phi_n$, $n\in \N$, as in Example (1) above.

%Note that the first example shows that for some doubly connected domain $G$, there exists a domain $\Omega$ containing $G$ and a universal sequence $\Phi$ of injective holomorphic mappings from $G$ to $\Omega$. Compare this with the fact that if $\Omega=G$, such universal injective sequence does never exist. We shall also say that the first example may be easily modified in order to exhibit a domain $G$ with at least two holes, an infinitely connected domain $\Omega$, and injective holomorphic mappings $(\phi_n)_n$ from $G$ to $\Omega$, for which the assumption of Corollary \ref{coro-iff-simply-two-holes} holds, and hence for which $(\Phi,G,\Omega)$-universal functions exist.

\medskip

Let us come back to the statement of Theorem \ref{thm-nec-inj-one-infinity-holes}. Even if it provides a necessary and sufficient condition for the existence of $(\Phi,G,\Omega)$-universal functions, a detail may look unpleasant: the fact this condition is stated for compact sets ($K$ and $L$) in $\MM(G)$ and $\MM(\Omega)$. The purpose of the next two lemmata is to make it understand that the condition is also necessary for compact subsets with less regularity.

\begin{lemme}\label{Lemma3.11}Let $K$ and $K'$ be two compact subsets of $G$, with $K\subset K'$. Let also $\phi$ be a holomorphic mapping from $G$ to $\Omega$ that is injective in a neighbourhood of $K'$. If $K$ is $G$-convex and $\phi(K')$ is $\Omega$-convex, then $\phi(K)$ is $\Omega$-convex.
\end{lemme}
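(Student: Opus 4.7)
My plan is to fix an arbitrary hole $O$ of $\phi(K)$ and show that $O$ must meet $\C\setminus \Omega$. I would split into two cases depending on whether $O$ is contained in $\phi(K')$ or not.

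The easy case is when $O\not\subset \phi(K')$. Then I pick $z\in O\setminus \phi(K')$ and let $C$ be the connected component of $\C\setminus \phi(K')$ through $z$. Since $C$ is connected, disjoint from $\phi(K')\supset \phi(K)$, and meets $O$, it lies in the single component of $\C\setminus \phi(K)$ containing $z$, so $C\subset O$. Because $O$ is bounded, so is $C$, i.e.\ $C$ is a hole of $\phi(K')$; by $\Omega$-convexity of $\phi(K')$, $C$ contains a point of $\C\setminus \Omega$, and this point lies in $O$.

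The main obstacle, as I expect, is to rule out the remaining case $O\subset \phi(K')$. My plan here is to exploit the injectivity of $\phi$: choose an open neighbourhood $U\subset G$ of $K'$ on which $\phi$ is injective, set $V=\phi(U)$, and let $\psi:V\to U$ denote the inverse biholomorphism. Since $\overline{O}\subset \phi(K')\subset V$, the set $\psi(O)$ is well defined, open, connected, bounded, and contained in $K'\subset G$; furthermore, injectivity of $\phi$ on $U$ together with $\partial O\subset \phi(K)$ gives $\psi(O)\cap K=\emptyset$ and $\partial \psi(O)=\psi(\partial O)\subset K$. Let $W$ be the connected component of $\C\setminus K$ containing $\psi(O)$. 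I would then show $\psi(O)=W$ by a clopen argument in $W$: openness is automatic, and closedness in $W$ follows from $\overline{\psi(O)}\cap W\subset (\psi(O)\cup K)\cap W=\psi(O)$. Hence $W$ is bounded, so it is either a hole of $K$ or $K$ has no holes at all.

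To finish, I invoke $G$-convexity. If $W$ is a hole of $K$, $G$-convexity forces $W$ to contain a point of $\C\setminus G$; but $W=\psi(O)\subset K'\subset G$, a contradiction. If on the contrary $K$ has no holes, then the only component of $\C\setminus K$ is the unbounded one, contradicting boundedness of $W$. Either way the second case cannot occur, so every hole $O$ of $\phi(K)$ meets $\C\setminus \Omega$ via the first case, proving that $\phi(K)$ is $\Omega$-convex.
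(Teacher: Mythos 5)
Your proof is correct, and it is essentially the argument the paper has in mind: the paper omits the proof, deferring to the analogous Lemma 3.11 of Grosse-Erdmann and Mortini, whose proof is exactly this two-case analysis (a hole of $\phi(K)$ meeting $\C\setminus\phi(K')$ absorbs a hole of $\phi(K')$, while a hole trapped inside $\phi(K')$ is pulled back by the local inverse $\psi$ to a hole of $K$ lying in $K'\subset G$, contradicting $G$-convexity). All the delicate points --- $\overline{O}\subset V$, $\partial\psi(O)=\psi(\partial O)\subset K$, and the clopen identification $\psi(O)=W$ --- are handled properly.
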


\begin{proof}It is very similar to that of \cite[Lemma 3.11]{G-Erdmann-Mortini}, so is left to the reader.
\end{proof}

\begin{lemme}\label{lemme-tout-compact}%Let $G$ and $\Omega$ be two domains and let
Let $K$ and $L$ be two compact sets, $G$-convex and $\Omega$-convex respectively. Let $\phi$ be a holomorphic mapping from $G$ to $\Omega$. We assume that $\Omega$ is not simply connected and that there exist two compact sets $K'\supset K$ in $\MM(G)$ and $L'\supset L$ in $\MM(\Omega)$ such that $\phi$ is injective in a neighbourhood of $K'$, $\phi(K')\cap L'=\emptyset$ and $\phi(K')\cup L'$ is $\Omega$-convex. Then $\phi$ is injective in a neighbourhood of $K$, $\phi(K)\cap L=\emptyset$ and $\phi(K)\cup L$ is $\Omega$-convex. 
\end{lemme}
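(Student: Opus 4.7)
The plan is to verify the three conclusions in turn. The first two are essentially automatic: since $K\subset K'$, the injectivity of $\phi$ on a neighbourhood of $K'$ restricts to injectivity on a neighbourhood of $K$, and $\phi(K)\cap L\subset \phi(K')\cap L'=\emptyset$. The heart of the proof is therefore the $\Omega$-convexity of $\phi(K)\cup L$.

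Fix a hole $U$ of $\phi(K)\cup L$; I must produce a point of $\C\setminus \Omega$ inside $U$. I would split on whether $U$ meets $\C\setminus(\phi(K')\cup L')$. If it does, pick a component $C$ of $U\setminus(\phi(K')\cup L')$; the key observation is that $C$ is actually a whole component of $\C\setminus(\phi(K')\cup L')$. Indeed, the component $D$ of $\C\setminus(\phi(K')\cup L')$ containing $C$ is connected, meets $U$, and lies in $\C\setminus(\phi(K)\cup L)$, so $D$ is contained in the component $U$; thus $D\subset U\setminus(\phi(K')\cup L')$, whence $D=C$ by maximality of $C$. Since $C\subset U$ is bounded, $C$ is a hole of $\phi(K')\cup L'$, and the $\Omega$-convexity of the latter supplies a point of $\C\setminus\Omega$ in $C\subset U$.

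In the remaining case, $U\subset \phi(K')\cup L'$. Since $\phi(K')$ and $L'$ are disjoint compact sets, hence at positive distance, the connectedness of $U$ forces $U\subset \phi(K')$ or $U\subset L'$. Suppose $U\subset \phi(K')$; since $U\cap\phi(K)=\emptyset$, in fact $U\subset \phi(K'\setminus K)$. An injective holomorphic map is a biholomorphism onto its image, so the restriction of $\phi$ to a neighbourhood of $K'$ is a homeomorphism, and $\tilde U:=\phi^{-1}(U)$ is a well-defined open connected bounded subset of $K'\setminus K$. Moreover $\overline{U}\subset \phi(K')$ is disjoint from $L'\supset L$, so $\partial U\subset (\phi(K)\cup L)\cap \phi(K')=\phi(K)$; pulling back through the homeomorphism, $\partial \tilde U\subset K$. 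Therefore $\tilde U$ is a bounded component of $\C\setminus K$, i.e., a hole of $K$; but $\tilde U\subset K'\subset G$ contains no point of $\C\setminus G$, contradicting the $G$-convexity of $K$. The case $U\subset L'$ is symmetric: $U$ itself becomes a hole of $L$ lying inside $L'\subset \Omega$, contradicting the $\Omega$-convexity of $L$.

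The main technical obstacle is this second case: I must carefully use that an injective holomorphic map is a local homeomorphism in order to pull $U$ back to a clean open set $\tilde U$ sitting in $K'\setminus K$, and then verify that $\partial\tilde U$ lies in $K$ (rather than somewhere else in $K'$), by exploiting that $\overline{U}$ is separated from $L'$. Once $\tilde U$ is identified as a hole of $K$ (respectively $U$ as a hole of $L$), the $G$-convexity (respectively $\Omega$-convexity) hypothesis closes the argument cleanly.
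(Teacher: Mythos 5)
Your proof is correct, but it takes a genuinely different route from the paper's. The paper first shows that $\phi(K')$ is $\Omega$-convex (using that $L'$ is connected and has a hole, which is precisely where the hypotheses that $\Omega$ is not simply connected and that $L'\in \MM(\Omega)$ enter), deduces from Lemma \ref{Lemma3.11} that $\phi(K)$ is $\Omega$-convex, and then runs the trichotomy of Lemma \ref{lemme-one-infinity-holes} on the mutual position of the connected sets $\phi(K')$ and $L'$ to transfer the $\Omega$-convexity of $\phi(K')\cup L'$ down to $\phi(K)\cup L$. You instead classify each hole $U$ of $\phi(K)\cup L$ directly: either $U$ meets $\C\setminus(\phi(K')\cup L')$, in which case your component argument correctly shows that $U$ contains an entire hole of $\phi(K')\cup L'$ and hence a point of $\C\setminus\Omega$; or $U$ is absorbed by $\phi(K')$ (resp.\ by $L'$), in which case pulling $U$ back through the biholomorphism exhibits a hole of $K$ contained in $G$ (resp.\ exhibits $U$ as a hole of $L$ contained in $\Omega$), contradicting $G$-convexity of $K$ (resp.\ $\Omega$-convexity of $L$). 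All the delicate steps check out: $\partial U\subset \phi(K)\cup L$ because $U$ is a component of the complement of a compact set, the intersection with $\phi(K')$ kills the $L$-part, and the homeomorphism carries closures and boundaries correctly since $\overline{U}\subset\phi(K')$ and $\overline{\tilde U}\subset K'$ sit inside the respective open sets. In effect your Case 2a re-proves inline the pull-back mechanism behind Lemma \ref{Lemma3.11}, while Cases 1 and 2b replace the appeal to Lemma \ref{lemme-one-infinity-holes}. What your approach buys: it uses neither the connectedness and regularity of $K'$ and $L'$ nor the assumption that $\Omega$ is not simply connected --- only compactness, $\phi(K')\cap L'=\emptyset$, the $\Omega$-convexity of the union, and the convexity hypotheses on $K$ and $L$ --- so it actually proves a slightly stronger statement. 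What the paper's approach buys is brevity, by reusing two lemmata already established for the main theorem.
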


\begin{proof}Let $K,K',L,L'$ be as in the statement of the lemma. Notice that $L'$ has at least one (non-empty) hole, since $L'\in \MM(\Omega)$ and $\Omega$ is not simply connected. Let us first show that $\phi(K')$ is $\Omega$-convex. If not, it has a hole $U$ contained in $\Omega$. Since $L'$ is connected, has a hole and is $\Omega$-convex, $L'$ does not intersect $U$. So $U$ is also a hole of $\phi(K')\cup L'$, a contradiction. By Lemma \ref{Lemma3.11}, it implies that $\phi(K)$ is $\Omega$-connected.

Now, by assumption, it is obvious that $\phi$ is injective in a neighbourhood of $K$ and that $\phi(K)\cap L=\emptyset$. It remains to check that $\phi(K)\cup L$ is $\Omega$-convex. By Lemma \ref{lemme-one-infinity-holes}, we are in one of the three following situations:
\begin{enumerate}
	\item $\phi(K')$ and $L'$ are in the unbounded connected component of the complement of the other, and so are $\phi(K)$ and $L$. Since the latter sets are both $\Omega$-convex respectively, $\phi(K)\cup L$ is $\Omega$-convex.
	\item $\phi(K')$ lies in a hole $O'$ of $L'$ and $O'\setminus \widehat{\phi(K')}$ contains a hole $O$ of $\Omega$. If $\phi(K)$ and $L$ are in the unbounded connected component of the complement of the other, then we are done. If not, then since $\phi(K)\subset \phi(K')$ and $L\subset L'$, $\phi(K)$ lies in a hole $O''$ of $L$ such that $O''\setminus \widehat{\phi(K)}$ contains $O'\setminus \widehat{\phi(K')}$, and thus contains $O$. It follows that any hole of $\phi(K)\cup L$ contains a hole of $\Omega$, so $\phi(K)\cup L$ is $\Omega$-convex.
	\item $L'$ lies in a hole of $\phi(K')$. The proof is very similar to that of (2) and is omitted.
\end{enumerate}
\end{proof}

The following statement, which is a direct consequence of Theorem \ref{thm-nec-inj-one-infinity-holes}, Corollary \ref{coro-iff-simply-two-holes} and the last two lemmata, summarizes the main result of this section, in the case $\Phi=(\phi_n)_n$ is eventually injective.

\begin{thm}\label{thm-charac-injective}%Let $G$ and $\Omega$ be two domains. Let $\Phi =(\phi_n)_n$ be a sequence of eventually injective holomorphic mappings from $G$ to $\Omega$. 
Let us assume that $\Phi =(\phi_n)_n$ is eventually injective. Then
\begin{enumerate}
	\item There exists a $(\Phi,G,\Omega)$-universal function if and only if for every $G$-convex compact set $K$, every $\Omega$-convex compact set $L$, and every $N\in \N$, there exists $n\geq N$ such that $\phi_n(K)\cap L = \emptyset$ and $\phi_n(K)\cup L$ is $\Omega$-convex.
	\item If $G$ is simply connected or has at least two holes, there exists a $(\Phi,G,\Omega)$-universal function if and only if for every $G$-convex compact set $K$, every compact set $L\subset \Omega$, and every $N\in \N$, there exists $n\geq N$ such that $\phi_n(K)\cap L = \emptyset$ and $\phi_n(K)$ is $\Omega$-convex.
	\item If $G$ is not simply connected and $\Omega$ is not infinitely connected, there do not exist $(\Phi,G,\Omega)$-universal functions.
\end{enumerate}
\end{thm}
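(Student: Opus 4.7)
The plan is to deduce the theorem from the three cited results by extending the characterizations of Theorem \ref{thm-nec-inj-one-infinity-holes} and Corollary \ref{coro-iff-simply-two-holes} (which are stated for the restricted classes $\MM(G)$ and $\MM(\Omega)$) to arbitrary $G$-convex and $\Omega$-convex compact sets. Part (3) is a verbatim restatement of the last assertion of Theorem \ref{thm-nec-inj-one-infinity-holes}, so nothing is to be proved there. For the sufficient (``if'') directions of (1) and (2), the given conditions are assumed to hold for all $G$-convex compact $K$ and all $\Omega$-convex compact $L$ (respectively, all compact $L\subset \Omega$); since every $K\in \MM(G)$ is $G$-convex and every $L\in \MM(\Omega)$ is $\Omega$-convex, the hypotheses of Theorem \ref{thm-nec-inj-one-infinity-holes} and Corollary \ref{coro-iff-simply-two-holes} are satisfied, so a $(\Phi,G,\Omega)$-universal function exists.

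For the necessary direction in (2), I would fix a $G$-convex compact $K$, a compact $L\subset \Omega$ and $N\in \N$. By Lemma \ref{exhaustion-convenient}, there exists $K'\in \MM(G)$ with $K\subset K'$. Corollary \ref{coro-iff-simply-two-holes} then produces $n\geq N$ such that $\phi_n$ is injective on a neighbourhood of $K'$, $\phi_n(K')\cap L=\emptyset$, and $\phi_n(K')$ is $\Omega$-convex. Since $K\subset K'$ we immediately get $\phi_n(K)\cap L=\emptyset$, and Lemma \ref{Lemma3.11} (applied with $\phi=\phi_n$) yields the $\Omega$-convexity of $\phi_n(K)$, as required.

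For the necessary direction in (1), I would proceed analogously: given a $G$-convex $K$ and an $\Omega$-convex $L$, enlarge them to $K'\in \MM(G)$ and $L'\in \MM(\Omega)$ using Lemma \ref{exhaustion-convenient}; Theorem \ref{thm-nec-inj-one-infinity-holes} then produces $n\geq N$ with $\phi_n(K')\cap L'=\emptyset$ and $\phi_n(K')\cup L'$ being $\Omega$-convex. When $\Omega$ is not simply connected, Lemma \ref{lemme-tout-compact} transfers these properties from $(K',L')$ down to $(K,L)$ directly. The only point that requires extra care is the case where $\Omega$ is simply connected, which lies outside the scope of Lemma \ref{lemme-tout-compact}: here, if $G$ were not simply connected then by part (3) no universal function would exist, so $G$ must be simply connected too. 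In this situation, part (2) (already established) produces some $n\geq N$ with $\phi_n(K)\cap L=\emptyset$ and $\phi_n(K)$ being $\Omega$-convex; since $\Omega$ is simply connected, both $\phi_n(K)$ and $L$ have connected complements, whence Lemma \ref{lem-union-omega-convex} (applied with $\widehat{\phi_n(K)}=\phi_n(K)$ and $\widehat L=L$) implies that $\phi_n(K)\cup L$ is $\Omega$-convex.

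The substantive work is done in the previous statements; the present theorem amounts to bookkeeping that lifts from the $\MM$-classes to arbitrary convex compacta. The mildly delicate point, namely handling the simply connected $\Omega$ case that Lemma \ref{lemme-tout-compact} excludes, is resolved by invoking part (3) to force $G$ to be simply connected and then appealing to part (2) already established together with Lemma \ref{lem-union-omega-convex}.
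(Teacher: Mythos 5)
Your proposal is correct and follows exactly the route the paper intends: the paper states the theorem as a direct consequence of Theorem \ref{thm-nec-inj-one-infinity-holes}, Corollary \ref{coro-iff-simply-two-holes}, Lemma \ref{Lemma3.11} and Lemma \ref{lemme-tout-compact}, which is precisely the combination you carry out. Your explicit treatment of the case where $\Omega$ is simply connected (excluded from Lemma \ref{lemme-tout-compact}) via part (3), part (2) and Lemma \ref{lem-union-omega-convex} is a detail the paper leaves implicit, and it is handled correctly.
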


\medskip

In the next example, we exhibit finitely connected domains $G$, that are not simply connected, and infinitely connected domains $\Omega$ for which $(\Phi,G,\Omega)$-universal functions may exist, with $\Phi$ injective. It also shows that the necessary and sufficient condition of Corollary \ref{coro-iff-simply-two-holes} is not sufficient any more if $G$ is doubly connected. Note that, by Remark \ref{rem-two-holes-proof-last-assertion-thm}, the assumption of Corollary \ref{coro-iff-simply-two-holes} is never satisfied if $G$ has at least two holes and $\Omega$ is not infinitely connected.

\begin{ex}\label{example-g-one-hole-omega-inf-conn}{\rm 
		%Let $G$ and $\Omega$ be two domains. When $G$ has $p$ holes with $p\neq 1$ (possibly $p=\infty$) and $\Omega$ is not infinitely connected, Theorem \ref{thm-nec-inj-GM} shows that it is possible to characterize those sequences $\Phi=(\phi_n)$ of holomorphic mappings from $G$ to $\Omega$ which are universal, and that these characterizations only depend on the geometry of $G$ and $\Omega$. This is in particular the case if $G=\Omega$, as in \cite{G-Erdmann-Mortini}. The following two examples shows that the situation drastically changes if $G$ has exactly $1$ hole and $\Omega$ is infinitely connected.
		
		(1) Let $G=\{z\in \C:\,2<|z|<4\}$. For $n\in \N$, we set $R_n=2^{-2n}$ and $r_n=2^{-2n-1}$. Let us also denote by $D_n$ the closed disc centred at $3.2^{-2n-2}$ with radius $2^{2n-2}$, and set
		\[
		\Omega= \C\setminus (\{0\}\cup \bigcup_{n\in \N}D_n).
		\]
		Then $\Omega$ is an infinitely connected domain. Moreover, since $r_{n-1}/R_n=2$, $G$ is conformally equivalent to each annulus $\{z\in \C:\,R_n<|z|<r_{n-1}\}$, $n\in \N$. Let us then denote by $\phi_n$ a conformal map from $G$ onto $\{z\in \C:\,R_n<|z|<r_{n-1}\}$, $n\in \N$. By construction, it is plain to check that if $K$ is a $G$-convex compact set and $L$ is an $\Omega$-convex compact set, then there exists $n\in \N$ such that each connected component of $\C\setminus (L\cup \phi_n(K))$ contains a hole of $\Omega$. So $L\cup \phi_n(K)$ is an $\Omega$-convex compact set and we can apply Theorem \ref{thm-charac-injective} to infer that there exist $(\Phi,G,\Omega)$-universal functions.
		
		(2) Let us consider $\Omega= \C \setminus (\{0\}\cup \bigcup_{n\in \N}B_n)$ where the $B_n$'s are any open disjoint discs contained in $\C \setminus \D$. Then, by the maximum modulus principle, we can check that with $G$ and $\Phi$ as in (1), there cannot exist $(\Phi,G,\Omega)$-universal functions (see Remark \ref{remark-cannot} for a similar argument). However, for every $G$-convex compact set $K$, every $\Omega$-convex compact set $L$, and every $N\in \N$, there exists $n\geq N$ such that $\phi_n(K)$ is $\Omega$-convex and $\phi_n(K)\cap L=\emptyset$.
		
		(3) Let $G$ be a bounded domain with $p$ holes, $0\leq p \leq \infty$. The first example suggests an \textit{ad-hoc} construction of a domain $\Omega$ and of an injective sequence $\Phi$ from $G$ to $\Omega$ that is universal. Denote by $\phi$ an arbitrary injective entire function such that $0\not\in \widehat{\overline{\phi(G)}}$ (for e.g., $\phi$ can be chosen as a translation). Let also $(a_l)_l \subset \C$ be a sequence tending to $0$. Since $\phi(G)$ is bounded, we can define by induction an increasing sequence $(l_n)_n$, with $l_0=0$, of positive integers and a decreasing sequence $(R_n)_n$ of positive numbers, with $R_n\to 0$, such that if we set $\phi_n=a_{l_n}\phi$, then
		\[
		\overline{\phi_{n+1}(G)}\subset A(0,R_{n+1},R_n),
		\]
		where $A(0,R_{n+1},R_n):=\{z\in \C: R_{n+1}< |z| < R_n\}$, $n\in \N$. Let $\mathcal{O}$ denote the set consisting of all the holes of $\cup_{n\in \N}\overline{\phi_n(G)}$. Then, for any $O\in \mathcal{O}$, let $D_O$ denote a non-empty closed disc contained in $O$ and consider the set
		\[
		\Omega:= \C \setminus \left(\{0\}\cup \bigcup _{O\in \mathcal{O}} D_O\right).
		\]
		By construction, it is readily checked that $\Phi:=(\phi_n)_n$ is a sequence of injective holomorphic mappings from $G$ to $\Omega$ such that if $K$ is a $G$-convex compact set and $L$ is an $\Omega$-convex compact set, then there exists $n\in \N$ such that each connected component of $\C\setminus (L\cup \phi_n(K))$ contains a hole of $\Omega$. Thus, by Theorem \ref{thm-charac-injective}, the sequence $\Phi$ is universal (with respect to $G$ and $\Omega$).
		
		%In particular, note that if $0$ is contained in a hole of $G$, then we can simply consider the functions $\phi_n=2^{-l_n}z$, $n\in \N$.
	}
\end{ex}

%Since any non-simply connected domain that is finitely connected and such that no component of the complement is a point is conformally equivalent to a set of the form $\{z\in \C:\,c< |z| < C\}$ for some $0<c < C <\infty$, with finitely many Jordan arcs removed (for e.g., \cite[Theorem 4.2.3]{Krantz}), we immediately deduce from the last example the following result.
The last example implies the following proposition.

\begin{prop}Let $G$ be a bounded domain. There exist a domain $\Omega$, a number $a\in \C$ and an increasing sequence $(k_n)_n$ of integers such that the sequence $(\phi_n)_n$, defined by $\phi_n(z):=(z+a)/k_n$, is a universal sequence of injective holomorphic mappings from $G$ to $\Omega$.
\end{prop}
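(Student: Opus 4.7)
The plan is to directly specialize Example \ref{example-g-one-hole-omega-inf-conn}(3) so that the entire function $\phi$ there is taken to be a translation and the auxiliary sequence $(a_\ell)_\ell$ consists of reciprocals of positive integers; the resulting sequence $\phi_n = a_{\ell_n}\phi$ then has exactly the form $(z+a)/k_n$ required by the proposition.

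First I would choose $a\in\C$ with $|a|$ large enough that $0\notin\widehat{\overline{G+a}}$. Setting $R:=\sup_{z\in G}|z|$, which is finite by boundedness of $G$, any $a$ with $|a|>2R$ works: indeed $\overline{G+a}\subset\overline{B(a,R)}$ and $0\notin \overline{B(a,R)}$, while the complement of a closed disc in $\C$ is connected and unbounded, so $0$ lies in the unbounded component of $\C\setminus\overline{G+a}$ and hence does not belong to its polynomial hull. Define $\phi(z):=z+a$; it is an injective entire function and thus satisfies the hypothesis of Example \ref{example-g-one-hole-omega-inf-conn}(3).

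Next I would take $a_\ell:=1/\ell$ for $\ell\geq 1$, which tends to $0$. Applying the inductive construction of Example \ref{example-g-one-hole-omega-inf-conn}(3) with these data produces an increasing sequence $(\ell_n)_n$ of positive integers, a decreasing sequence $(R_n)_n$ of positive reals tending to $0$, and, setting $\phi_n(z):=a_{\ell_n}\phi(z)=(z+a)/\ell_n$, the annular enclosures $\overline{\phi_{n+1}(G)}\subset A(0,R_{n+1},R_n)$. Defining $\Omega$ as in the example by removing $\{0\}$ together with one non-empty closed disc chosen in each hole of $\bigcup_n\overline{\phi_n(G)}$, the conclusion of Example \ref{example-g-one-hole-omega-inf-conn}(3), obtained via Theorem \ref{thm-charac-injective}, states that $(\phi_n)_n$ is a universal sequence of injective holomorphic mappings from $G$ to $\Omega$.

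Setting $k_n:=\ell_n$ yields the required increasing sequence of integers with $\phi_n(z)=(z+a)/k_n$. There is no genuine obstacle here, since all the substantive verification is absorbed in Example \ref{example-g-one-hole-omega-inf-conn}(3); the only point worth highlighting is that choosing $a_\ell=1/\ell$ from the outset is precisely what forces the scaling factors in $\phi_n$ to be reciprocals of positive integers.
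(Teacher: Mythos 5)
Your proposal is correct and is exactly the argument the paper intends: the paper simply states that the proposition follows from Example \ref{example-g-one-hole-omega-inf-conn}(3), and you have carried out the required specialization (taking $\phi$ a translation by a sufficiently large $a$ and $a_\ell=1/\ell$) correctly. Nothing further is needed.
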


\medskip

If we do not impose eventual injectivity, it is not difficult to find pathological examples of sequences $\Phi=(\phi_n)_n$ acting from $G$ to $\Omega$, possibly both finitely connected and not simply connected, for which there exist $(\Phi,G,\Omega)$-universal functions. This can be seen from \cite[Proposition 3.6]{G-Erdmann-Mortini}. However, the same proof as that of (a)$\Rightarrow$(b)$\Rightarrow$(c) of \cite[Theorem 3.2]{G-Erdmann-Mortini} shows that for any sequence $\Phi$, the existence of $(\Phi,G,\Omega)$-universal functions always implies the existence of an increasing sequence $(n_j)_j$ such that for any compact set $K\subset G$ and any compact set $L\subset \Omega$, there exists $J\in \N$ such that $\Phi_{n_j}$ is injective on a neighbourhood of $K$ and $\Phi_{n_j}(K)\cap L=\emptyset$ for any $j\geq J$. Since the image of compact set with connected complement by a function which is injective and holomorphic on a neighbourhood of it still has connected complement, we deduce from Theorem \ref{thm-charac-injective} (2) the following statement.

\begin{thm}\label{thm-simply-connected-gl-GM}%Let $\Omega$ be a domain, $G$ a simply connected domain, and $\Phi =(\phi_n)_n$ a sequence of holomorphic mappings from $G$ to $\Omega$. 
If $G$ is simply connected, then there exists a $(\Phi,G,\Omega)$-universal function if and only if for any compact set $K$ in $G$ and any compact set $L$ in $\Omega$, there exists $n\in \N$ such that $\phi_n(K)\cap L=\emptyset$ and $\phi_n$ is injective on a neighbourhood of $K$.
\end{thm}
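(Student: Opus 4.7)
The forward direction is essentially observed in the paragraph preceding the statement: starting from any $(\Phi,G,\Omega)$-universal function, the adaptation of (a)$\Rightarrow$(b)$\Rightarrow$(c) of \cite[Theorem 3.2]{G-Erdmann-Mortini} produces an increasing sequence $(n_j)_j\subset\N$ along which, for every pair of compacts $K\subset G$ and $L\subset\Omega$, $\phi_{n_j}$ is injective on a neighbourhood of $K$ and $\phi_{n_j}(K)\cap L=\emptyset$ for all $j$ large enough. Any such $n_j$ is an admissible $n$, so this direction is immediate.

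For the converse I plan to verify the Birkhoff criterion of Fact \ref{fact-Birk-regular}. Fix $\varepsilon>0$, $K\in\MM(G)$, $L\in\MM(\Omega)$, $g\in H(G)$ and $h\in H(\Omega)$. Since $G$ is simply connected, the $G$-connectedness condition in $\MM(G)$ forces $K$ to carry no hole, so $K$ is a closed Jordan domain with connected complement. Applying the hypothesis to $K$ and $L$, select $n\in\N$ such that $\phi_n(K)\cap L=\emptyset$ and $\phi_n$ is injective on a neighbourhood of $K$. By the fact recalled just above the statement, $\phi_n(K)$ then also has connected complement, and is in fact a closed Jordan domain (image of a Jordan domain under a biholomorphism of neighbourhoods). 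The crucial step is to show that $\phi_n(K)\cup L$ is $\Omega$-convex. Because $\phi_n(K)$ has no hole, the connected set $L$ lies in the unique (unbounded) component of $\C\setminus\phi_n(K)$, and two subcases must be distinguished. If $\phi_n(K)$ sits in the unbounded component of $\C\setminus L$, the holes of $\phi_n(K)\cup L$ coincide with those of $L$, each containing a point of $\C\setminus\Omega$ thanks to $L\in\MM(\Omega)$. Otherwise $\phi_n(K)$ is contained in some hole $O$ of $L$; since $L$ is regular, $O$ is a Jordan domain, so $O\setminus\phi_n(K)$ is connected (topologically an annulus), and the holes of $\phi_n(K)\cup L$ are the holes of $L$ other than $O$ together with $O\setminus\phi_n(K)$. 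The new hole $O\setminus\phi_n(K)$ still contains the point of $\C\setminus\Omega$ that was sitting in $O$, because $\phi_n(K)\subset\Omega$ cannot absorb it.

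Having established that $\phi_n(K)\cup L$ is $\Omega$-convex, I define $\kappa$ on $\phi_n(K)\cup L$ by $\kappa=h$ on $L$ and $\kappa=g\circ\phi_n^{-1}$ on $\phi_n(K)$; this is well defined by the disjointness, continuous on $\phi_n(K)\cup L$, and holomorphic in its interior. Mergelyan's theorem \ref{Mergelyan} then delivers a rational function $f$ with poles off $\Omega$, hence $f\in H(\Omega)$, with $\|f-\kappa\|_{\phi_n(K)\cup L}\leq\varepsilon$, which translates into $\|f-h\|_L\leq\varepsilon$ and $\|f\circ\phi_n-g\|_K\leq\varepsilon$. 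This verifies Fact \ref{fact-Birk-regular} and produces a $(\Phi,G,\Omega)$-universal function. The only mildly delicate point of the whole argument is the $\Omega$-convexity analysis of the second paragraph, which is tamed by the fact that $K$, and therefore $\phi_n(K)$, is a Jordan domain.
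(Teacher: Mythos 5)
Your proof is correct and follows essentially the same route as the paper: the forward direction is the same appeal to the adaptation of (a)$\Rightarrow$(b)$\Rightarrow$(c) of Grosse-Erdmann--Mortini, and your converse simply unfolds the paper's deduction from Theorem \ref{thm-charac-injective}(2), namely the Runge/Mergelyan construction behind Fact \ref{fact-Birk-regular} combined with the observation that a compact set with connected complement, disjoint from an $\Omega$-convex $L$, unions with $L$ to an $\Omega$-convex set. The only difference is presentational: you make this last $\Omega$-convexity check explicit (the paper leaves it implicit), and your direct verification sidesteps the cosmetic mismatch that Theorem \ref{thm-charac-injective} is stated for eventually injective sequences while here injectivity near $K$ is supplied by the hypothesis itself.
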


%We finish this section by a remark.
%
%\begin{rem}
%	{\rm If $G=\C\setminus \{z_1,\ldots,z_N\}$ where $z_1,\ldots,z_N$, $N\geq 1$, are distinct, if $\Omega$ is a domain whose complement contains at most two points, and if $\Phi=(\phi_n)_n$ is any sequence of holomorphic mappings from $G$ to $\Omega$, then there do not exist $(\Phi,G,\Omega)$-universal functions. The proof is the same as that of \cite[Proposition 3.5]{G-Erdmann-Mortini}: by Picard's theorem, any holomorphic function with an essential singularity takes all complex values, except at most one. So each $\phi_n$ is a rational function and it must hold that $\phi_n^{-1}(\C\setminus \Omega)\subset \{z_1,\ldots,z_N\}$. Now, because rational functions are surjective from $\hat{\C}$ onto itself, $\C\setminus \Omega$ should have at most $N$ elements. Then this implies that there are only finitely many holomorphic maps from $G$ into $\Omega$. In particular, for any such domain $G$, there is no such sequence $\Phi=(\phi_n)_n$ for which there exists $f\in H(\Omega)$ that is $(\Phi,G,\Omega)$-universal.}
%\end{rem}
%In the remaining of the paper we focus on a quite different setting: we shall assume that $G$ is closed with empty interior and that the $\phi_n$'s, from $G$ to $\Omega$, are simply continuous.

\section{The case where $G$ is a closed set with empty interior}\label{section-closed-G}

Let us introduce some notations that we will use in the rest of the paper.

\begin{itemize}%\item If $a\in \C$ and $r\geq 0$, we denote by $D(a,r)$ the disc centered at $a$ with radius $r$.
	\item We denote by $\T$ the unit circle $\{z\in \C:\, |z|=1\}$.
	\item If $K$ is a compact set in $\C$, we denote by $\CC(K)$ the Banach space of continuous functions endowed with the supremum norm.
	\item If $f\in H(\D)$ and $r\geq 0$, we denote by $f_r$ the function defined by $z\mapsto f(rz)$, $z\in D(0,1/r)$.
\end{itemize}

In the whole section, $\Omega$ is a domain and $G\subset \C$ is a closed set with empty interior. Let $\Phi=(\phi_n)_n$ be a sequence of continuous mappings from $G$ to $\Omega$. In this paragraph, we focus on the following definition. Let $\MM$ be a family of compact subsets of $G$.

\begin{defi}\label{defi-G-closed}%Let $\Omega$ be a domain, $G\subset \C$ a closed set with empty interior, $\Phi$ and $\MM$ be as above. Let also $f$ be in $H(\Omega)$. 
We say that $f\in H(\Omega)$ is $(\Phi,\MM)$-\textit{universal} if, for every compact set $K\in \MM$, the set $\{f\circ \phi_n:\,n\in \N\}$ is dense in $\CC(K)$.
\end{defi}

A typical example of holomorphic function universal in the sense of this definition is that of \emph{Abel universal functions}.

\begin{ex}[Abel universal functions]\label{Abel}{\rm Let $\Omega=\D$, $G=\T$, $\Phi=(\phi_n)_n$ with $\phi_n(z)=r_nz$, where $(r_n)_n$ is an increasing sequence in $[0,1)$, convergent to $1$. Let also $\MM$ be the set of all compact subsets of $\T$, different from $\T$. We call \emph{Abel universal functions} any $(\Phi,\MM)$-universal function.
	
The existence of Abel universal series was pointed out in \cite{Charp}. These are functions with a wild radial boundary behaviour. The terminology \emph{Abel universal functions} refers to the fact that a function $f\in H(\D)$ is said to be Abel summable at $\zeta \in \T$ if it has radial limit at $\zeta$.
}
\end{ex}

We will first state several sufficient or necessary conditions for the existence of $(\Phi,\MM)$-universal functions. Examples will be given at the end of the section. In particular, we will focus on applications to wild boundary behaviour of holomorphic functions.

\medskip 

We have the following Birkhoff's type criterion for $(\Phi,\MM)$-universality when $\MM$ is countable. The proof, similar to that of Fact \ref{fact-Birk-regular}, is also left to the reader.

\begin{fact}\label{Birk-G-empty}%Let $\Omega$ be a domain and $G\subset \C$ a closed set with empty interior. 
Assume that $\MM$ is a countable family of compact subsets of $G$. % and $\Phi=(\phi_n)_n$ a sequence of continuous mappings from $G$ to $\Omega$.
Then there exists a $(\Phi,\MM)$-universal function if and only if for any $\varepsilon>0$, any compact set $K$ in $\MM$, any compact set $L$ of $\Omega$, any function $h\in \CC(K)$ and any function $g\in H(\Omega)$, there exist $f\in H(\Omega)$ and $n\in \N$ such that
\[
\sup_{z\in K}|f\circ \phi_n(z)-h(z)|\leq \varepsilon \quad \text{and} \quad \sup_{z\in L}|f(z)-g(z)|\leq \varepsilon.
\]
The set of $(\Phi,G)$-universal functions is either empty, or a dense $G_{\delta}$-subset of $H(\Omega)$.
\end{fact}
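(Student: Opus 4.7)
The plan is to carry out a Baire category argument of the same flavour as the one underlying Fact \ref{fact-Birk-regular}. Fix an enumeration $\MM = (K_j)_{j\in\N}$ and, for each $j$, a dense sequence $(h_j^i)_{i\in\N}$ in the separable Banach space $\CC(K_j)$. For each triple $(j,i,k)\in\N^3$ set
\[
U_{j,i,k} := \bigcup_{n\in\N} \left\{ f \in H(\Omega) : \sup_{z\in K_j}\left| f(\phi_n(z)) - h_j^i(z) \right| < \tfrac{1}{k} \right\}.
\]
Each composition map $T_{n,j} : H(\Omega) \to \CC(K_j)$, $f\mapsto f\circ \phi_n|_{K_j}$, is continuous and linear: $\phi_n(K_j)$ is a compact subset of $\Omega$, so local uniform convergence in $H(\Omega)$ transfers to uniform convergence of $f\circ \phi_n$ on $K_j$. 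Consequently every $U_{j,i,k}$ is open in $H(\Omega)$, and unpacking the definition of denseness in $\CC(K_j)$ shows that the set $\UU$ of $(\Phi,\MM)$-universal functions is exactly $\bigcap_{j,i,k} U_{j,i,k}$; in particular $\UU$ is always a $G_\delta$ subset of $H(\Omega)$.

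For the sufficiency of the Birkhoff condition, I would show that each $U_{j,i,k}$ is moreover dense. Given a basic Fréchet neighbourhood $\{f' : \sup_L|f'-g| < \delta\}$ of a point $g\in H(\Omega)$, applying the Birkhoff condition with $K=K_j$, $h=h_j^i$, $L$, $g$ and $\varepsilon = \min(\delta, 1/k)$ produces a pair $(f,n)$ that places $f$ simultaneously in $U_{j,i,k}$ and in the given neighbourhood. Baire's theorem applied to the Polish space $H(\Omega)$, together with the countability of the index set, then ensures that $\UU$ is a dense $G_\delta$ subset, hence non-empty.

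For the converse direction and the dichotomy, assume $\UU \neq \emptyset$. Linearity and continuity of the operators $T_{n,j}$ allow the classical universality criterion (the Birkhoff transitivity theorem for sequences of continuous linear operators between Fréchet spaces) to be applied to each index $j$ separately: the set $S_j := \{f \in H(\Omega) : \{T_{n,j}f\}_n \text{ is dense in } \CC(K_j)\}$ is either empty or a dense $G_\delta$ subset of $H(\Omega)$. The inclusion $\UU \subset S_j$ forces every $S_j$ to be non-empty, hence dense $G_\delta$; by Baire and the countability of $\MM$ it follows that $\UU = \bigcap_j S_j$ is itself dense $G_\delta$. The Birkhoff condition comes for free: given data $(\varepsilon, K, L, h, g)$, pick by density some $f\in\UU$ with $\sup_L|f - g| \le \varepsilon$, then exploit universality of $f$ on $K$ to find $n$ with $\sup_K|f\circ\phi_n - h| \le \varepsilon$.

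The only step that is not immediate is the universality criterion invoked in the last paragraph, namely that for a sequence of continuous linear operators between separable Fréchet spaces the set of vectors with dense orbit is either empty or residual; its proof is itself a classical Baire argument exploiting the linearity of the operators. Every other step — openness of the $U_{j,i,k}$, matching of the basic Fréchet neighbourhoods to the hypotheses of the Birkhoff condition, and the countable Baire intersection — is entirely routine.
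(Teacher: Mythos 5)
Your Baire-category argument for the ``if'' direction and for the $G_\delta$ structure of the set of universal functions is correct and is exactly the standard route the paper has in mind: continuity of each map $f\mapsto f\circ\phi_n|_{K_j}$ (because $\phi_n(K_j)$ is compact in $\Omega$), openness of the sets $U_{j,i,k}$, density of each of them under the approximation hypothesis, and a countable Baire intersection. The gap is in your ``only if'' direction. The principle you invoke --- that for a sequence $(T_n)$ of continuous linear operators between separable Fr\'echet spaces the set of universal vectors is either empty or residual --- is not the Birkhoff transitivity theorem and is false in general. The transitivity theorem for \emph{sequences} of operators says that the set of universal vectors is residual if and only if it is dense if and only if $(T_n)$ is topologically transitive; it does not upgrade mere non-emptiness to density. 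That upgrade holds for the iterates $T_n=T^n$ of a single operator because the orbit of a hypercyclic vector consists of hypercyclic vectors, an argument with no analogue for general sequences; and there do exist sequences of continuous linear operators with a non-empty but non-dense set of universal vectors (for instance the sequences $x\mapsto(T_1^nx,\dots,T_N^nx)$ arising in disjoint hypercyclicity: d-hypercyclic tuples need not be densely d-hypercyclic). Consequently your claim that each $S_j$, hence $\UU=\bigcap_jS_j$, is a dense $G_\delta$ as soon as $\UU\neq\emptyset$ is unjustified, and with it both the ``only if'' implication and the empty-or-residual dichotomy.

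Note also why the obvious repair fails: starting from a single universal $f_0$ and data $(\varepsilon,K,L,h,g)$, the natural candidate $f=g+cf_0$ with $c$ small gives $f\circ\phi_n=g\circ\phi_n+c\,f_0\circ\phi_n$, and the term $g\circ\phi_n$ is uncontrolled on $K$ because $\phi_n(K)$ may wander anywhere in $\Omega$; density of $\{c\,f_0\circ\phi_n|_K\}_n$ does not let you hit a target that moves with $n$. So the necessity direction genuinely requires an argument specific to the situation rather than soft linearity. This is consistent with how the paper extracts necessary conditions elsewhere: it tests the universal function directly against explicit targets such as $z\mapsto j+z$ (see Proposition \ref{prop-nec-G-empty-interior}) to obtain run-away and injectivity information, rather than passing through density of the set of universal functions.
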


\medskip

Thanks to this criterion, we can deduce a sufficient condition for the existence of $(\Phi,\MM)$-universal functions.

\begin{prop}\label{thm-general}%Let $\Omega$ be a domain, $G\subset \C$ a closed set with empty interior, and let $\Phi=(\phi_n)$ be a sequence of continuous mappings from $G$ into $\Omega$. 
Assume that $\MM$ is a countable family of compact subsets of $G$. There exists a $(\Phi,\MM)$-universal function in $H(\Omega)$ whenever for any compact set $K\in \MM$ and any compact set $L$ of $\Omega$, there exists $n\in \N$ such that $\phi_n$ is injective on $K$, $\phi_n(K)\cap L=\emptyset$ and $\phi_n(K)\cup L$ is $\Omega$-convex.
\end{prop}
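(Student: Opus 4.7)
The plan is to verify the Birkhoff-type criterion of Fact \ref{Birk-G-empty}. Fix $\varepsilon>0$, a compact set $K\in\MM$, a compact set $L\subset\Omega$, a function $h\in\CC(K)$, and a function $g\in H(\Omega)$. By the hypothesis, there exists $n\in\N$ such that $\phi_n$ is injective on $K$, $\phi_n(K)\cap L=\emptyset$, and $\phi_n(K)\cup L$ is $\Omega$-convex. I intend to produce the desired $f\in H(\Omega)$ by applying Mergelyan's theorem (Theorem \ref{Mergelyan}) on $\phi_n(K)\cup L$ to the function $\kappa$ defined by $\kappa=g$ on $L$ and $\kappa=h\circ \phi_n^{-1}$ on $\phi_n(K)$. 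The inverse $\phi_n^{-1}$ is well defined and continuous on $\phi_n(K)$ since $\phi_n$ is a continuous injection of the compact set $K$, hence a homeomorphism onto its image.

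Two properties of $\kappa$ must be checked for Mergelyan's theorem to apply: continuity on $\phi_n(K)\cup L$, and holomorphy on its interior. Continuity is immediate, as $\phi_n(K)$ and $L$ are disjoint compact sets and $\kappa$ is continuous on each piece. Holomorphy on the interior is the delicate point and constitutes the main obstacle of the proof. It will follow from the fact that $\phi_n(K)$ has empty interior in $\C$: indeed, once this is known, since $\phi_n(K)$ and $L$ are disjoint closed sets, the interior of $\phi_n(K)\cup L$ coincides with $\text{int}(L)$, and on $\text{int}(L)$ we have $\kappa=g\in H(\Omega)$. To check that $\phi_n(K)$ has empty interior, I invoke the invariance of domain theorem: should $\phi_n(K)$ contain an open disc $U\subset\C$, then $\phi_n^{-1}|_U:U\to\C$ would be a continuous injective map defined on an open subset of $\R^2$, whose image $\phi_n^{-1}(U)\subset K$ would have to be open in $\R^2$, contradicting the assumption that $K\subset G$ has empty interior.

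With these two facts at hand, Mergelyan's theorem yields a rational function $f$ with poles off $\Omega$---so in particular $f\in H(\Omega)$---satisfying $\sup_{z\in \phi_n(K)\cup L}|f(z)-\kappa(z)|\leq \varepsilon$. Restricting this estimate to $L$ gives $\sup_{z\in L}|f(z)-g(z)|\leq \varepsilon$, while restricting it to $\phi_n(K)$ and precomposing with $\phi_n$ on $K$ gives $\sup_{z\in K}|f\circ \phi_n(z)-h(z)|\leq \varepsilon$. This is precisely what Fact \ref{Birk-G-empty} requires, so a $(\Phi,\MM)$-universal function in $H(\Omega)$ exists.
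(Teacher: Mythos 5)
Your proof is correct and follows essentially the same route as the paper: verify the Birkhoff-type criterion of Fact \ref{Birk-G-empty} by applying Mergelyan's theorem on the $\Omega$-convex set $\phi_n(K)\cup L$ to the glued function $\kappa$. Your justification that $\phi_n(K)$ has empty interior via invariance of domain is in fact slightly more careful than the paper's (which attributes this to continuity alone, whereas injectivity is what really makes it work), but the argument is otherwise identical.
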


\begin{proof}%(1) Let $(K_j)$ be a sequence of compact subsets of $\T$, different from $\T$, such that for any $K\subsetneq \T$, there exists $j$ such that $K\subset K_j$. If $\WW(\phi_n)$ is non-empty, then there exists a sequence $(n_j)\subset \N$ such that, for any $j$,
	%\[
	%\sup_{\zeta\in K_j}\left|f\left(\phi_{n_j}(\zeta)\right)-j\right|\leq 1.
	%\]
	%Thus, for any $j$ and any $z\in K_j$, $\left|f\left(\phi_{n_j}(\zeta)\right)\right|\geq j-1$.
	%Let $(K_l)$ be an enumeration of all compact sets in $\MM$, and let $(P_j)$ be a sequence of polynomials in the variable $z$ and $\bar{z}$ whose coefficients have rational real and imaginary parts. By Stone-Weierstrass' theorem, $(P_j)$ is dense in $\CC(K)$ for any compact subset of $\C$. Thus, if $\UU$ denotes the set of all $(\Phi,\MM)$-universal functions in $H(\Omega)$, one can check that
	%\[
	%\UU=\bigcap_{j,l\in \N} \bigcup_{n\in \N}\left\{f\in H(\D):\,\sup_{K_l}\left|f\circ \phi_n - P_j\right|\leq \frac{1}{j}\right\}
	%\]
	%Since for each $l,n\in \N$, $\phi_n(K_l)$ is a compact subset of $\Omega$, $\UU$ is a countable intersection of open sets. By Baire's theorem, it is thus enough to prove that each set of the form
	%\[
	%\bigcup_{n\in \N}\left\{f\in H(\D):\,\sup_{K_l}\left|f\circ \phi_n - P_j\right|\leq \frac{1}{j}\right\}
	%\]
	%is dense in $H(\D)$.
In order to apply Fact \ref{Birk-G-empty}, let us fix $K\in \MM(G)$, $L$ a compact subset of $\Omega$, $g\in H(\Omega)$, $h\in \CC(K)$ and $\varepsilon>0$. By assumption, there exists $n\in \N$ such that $\phi_n$ is injective on $K$, $L\cap \phi_n(K)=\emptyset$ and $L\cup \phi_n(K)$ is $\Omega$-convex. Since $K$ has empty interior and $\phi_n$ is continuous on $K$, $\phi_n(K)$ has empty interior as well and, by Mergelyan's theorem, we can find a function $f\in H(\Omega)$ such that
	\[
	\sup_{L\cup \phi_{n}(K)}\left|f(z)- l(z)\right|\leq \varepsilon/2,
	\]
	where
	\[
	l(z)=\left\{\begin{array}{ll}g(z) & \text{for }z\in L\\
	h\circ \phi_n^{-1}(z) & \text{for }z\in \phi_{n}(K).\end{array}\right.
	\]
	One checks that $f$ is $\varepsilon$-close to $g$ on $L$ and that $f\circ \phi_n$ is $\varepsilon$-close to $h$ on $K$, and concludes by Fact \ref{Birk-G-empty}.
\end{proof}

\begin{rem}\label{rem-residual}{\rm Under the assumptions of Proposition \ref{thm-general}, the above sufficient condition ensures that the set of $(\Phi,\MM)$-universal functions is a dense $G_{\delta}$-subset of $H(\Omega)$. This will be used in Corollary \ref{coro-Jordan-union}.}
\end{rem}

Investigating when one can apply Proposition \ref{thm-general} requires to understand when $L\cup \phi_n(K)$ is $\Omega$-convex, for any given $\Omega$-convex compact set $L$ and any $K\in \MM$. Comparing to the situation of Section \ref{section-domain} where one may use the hole invariance by injective holomorphic mappings, here the problem is certainly too general to have a complete solution. Still, it becomes yet more realistic if we impose some conditions on $G$, or on $\Phi$. For instance, $L\cup \phi_n(K)$ is $\Omega$-convex whenever $\phi_n(K)$ has connected complement, $L$ is $\Omega$-convex and $L\cap \phi_n(K)$ is empty. Under the only assumption that $\phi_n$ is injective and continuous, it is not automatic that $\phi_n(K)$ has connected complement. Yet it holds true if $K$ is regular.

In the next corollary, we use the following terminology: a family $\FF$ of subsets of $\C$ is called uniformly separated if there exists $\delta >0$ such that for any $E,F\in \FF$, $\text{dist}(E,F)\geq \delta$, where $\text{dist}(E,F):=\inf\{|z-w|:\,z\in E,\,w\in F\}$.

\begin{coro}\label{coro-Jordan-union}%Let $\Omega$ be a domain, 
We assume that $G$ is a countable union of uniformly separated Jordan arcs or closed Jordan curves and that
%, $\Phi=(\phi_n)$ a sequence of continuous mappings from $G$ to $\Omega$, and let 
$\MM$ is the set of all compact subsets of $G$ with connected complement. There exists a $(\Phi,\MM)$-universal function in $H(\Omega)$ whenever for any compact subset $L$ of $\Omega$ and any compact set $K\in \MM$ there exists $n\in \N$ such that $\phi_n$ is injective on $K$ and $\phi_n(K)\cap L=\emptyset$.
\end{coro}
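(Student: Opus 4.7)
The plan is to reduce the uncountable family $\MM$ to a suitable countable subfamily $\MM_0\subset\MM$, deduce $(\Phi,\MM)$-universality from $(\Phi,\MM_0)$-universality by a Tietze extension argument, and then verify the criterion of Fact \ref{Birk-G-empty} for $\MM_0$ through a planar-topological argument that exploits the structure of the sets in $\MM_0$ as finite disjoint unions of Jordan arcs.

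First I would fix the decomposition $G=\bigsqcup_{i}J_i$ and a rational parametrization of each $J_i$. Since the $J_i$'s are uniformly separated, every compact $K\subset G$ meets only finitely many of them; and if $J_i$ is a closed Jordan curve, then $K\cap J_i\subsetneq J_i$ for every $K\in\MM$, since otherwise $J_i$ would enclose a two-dimensional region not contained in $G$, producing a hole in $K$. Let $\MM_0$ be the countable family of all finite unions $K'=\bigcup_{i\in F}K'_i$, where $F\subset\N$ is finite and each $K'_i$ is a finite disjoint union of closed sub-arcs of $J_i$ with rational endpoints, chosen to be a proper subset of $J_i$ when $J_i$ is closed. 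Each such $K'$ is a finite disjoint union of Jordan arcs and therefore has connected complement in $\C$ (a standard consequence of Alexander duality), so $\MM_0\subset\MM$. Moreover every $K\in\MM$ is contained in some $K'\in\MM_0$: each non-empty $K\cap J_i$ can be over-approximated from above by a finite disjoint union of closed rational-endpoint sub-arcs of $J_i$, kept a proper subset when $J_i$ is closed.

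I would then deduce $(\Phi,\MM)$-universality from $(\Phi,\MM_0)$-universality by Tietze extension: given $K\in\MM$, $h\in\CC(K)$ and $\varepsilon>0$, pick $K'\in\MM_0$ with $K\subset K'$, extend $h$ continuously to $\tilde h\in\CC(K')$, and transfer any approximation $\|f\circ\phi_n-\tilde h\|_{K'}<\varepsilon$ down to $K$. It then remains to verify Fact \ref{Birk-G-empty} on $\MM_0$: given $\varepsilon>0$, $K'\in\MM_0$, a compact set $L\subset\Omega$, $g\in H(\Omega)$ and $h\in\CC(K')$, we may first replace $L$ by its $\Omega$-convex hull inside $\Omega$ (namely $L$ together with all holes of $L$ that are contained in $\Omega$), so that $L$ is $\Omega$-convex. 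The hypothesis then yields $n$ with $\phi_n$ injective on $K'$ and $\phi_n(K')\cap L=\emptyset$, and the required approximation will follow from Mergelyan's theorem (Theorem \ref{Mergelyan}) applied on $L\cup\phi_n(K')$ to the continuous function equal to $g$ on $L$ and to $h\circ\phi_n^{-1}$ on $\phi_n(K')$ (which is holomorphic on the interior of the union, that interior being $\text{int}(L)$), provided $L\cup\phi_n(K')$ is $\Omega$-convex.

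The hard part is therefore this topological claim. Since $\phi_n$ is continuous and injective on $K'$, the image $\phi_n(K')$ is a finite disjoint union of Jordan arcs whose endpoints lie in $\Omega\setminus L$. By connectedness each such arc is entirely contained in a single component of $\C\setminus L$, either the unbounded component $U_\infty$ or one of the holes $H_i$ of $L$, with both endpoints interior to that component. A standard planar-topology fact---a Jordan arc whose endpoints lie in the interior of a connected open subset $U\subset\C$ does not separate $U$, extended by induction to a finite disjoint union of arcs---then implies that $U_\infty\setminus\phi_n(K')$ is connected and unbounded and that each $H_i\setminus\phi_n(K')$ is connected and bounded; these are exactly the connected components of $\C\setminus(L\cup\phi_n(K'))$. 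Since $L$ is $\Omega$-convex and $\partial H_i\subset L\subset\Omega$, each hole $H_i$ contains, in its entirety, some connected component of $\C\setminus\Omega$, and this component is disjoint from $\phi_n(K')\subset\Omega$ and therefore still lies in $H_i\setminus\phi_n(K')$. Hence every hole of $L\cup\phi_n(K')$ meets $\C\setminus\Omega$, so $L\cup\phi_n(K')$ is $\Omega$-convex, and the proof is complete. The principal obstacle is precisely this step: the interior-endpoint condition must be in force so that the arcs of $\phi_n(K')$ neither create new $\Omega$-bad holes inside a given $H_i$ nor carve bounded pieces out of $U_\infty$.
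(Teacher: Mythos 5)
Your proof is correct and follows essentially the same route as the paper's: a countable cofinal family of finite disjoint unions of closed sub-arcs (the paper's Lemma \ref{lem-Jordan-curve-seq-comp}), the fact that injective continuous images of Jordan arcs have connected complement (the paper's Lemma \ref{geo-lemma1}), and the resulting $\Omega$-convexity of $L\cup\phi_n(K')$ feeding into Proposition \ref{thm-general} via Mergelyan--Runge. The only organizational difference is that the paper exhausts $G$ by the finite unions $G_n$ and intersects the resulting dense $G_\delta$ sets of universal functions by Baire's theorem, whereas you build a single countable cofinal subfamily of $\MM$ for all of $G$ at once and transfer by Tietze extension; both devices are standard and equivalent here.
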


The proof of this corollary is a consequence of Proposition \ref{thm-general}, up to two lemmata. The first one asserts that if $K$ a Jordan arc and $\phi$ is continuous and injective on $K$, then $\phi(K)$ has connected complement as well:

\begin{lemme}[Lemma 1.8 in \cite{Kolev}, for example]\label{geo-lemma1}If $\phi:[0,1]\to \C$ is continuous and injective then $\phi([0,1])$ has connected complement.
\end{lemme}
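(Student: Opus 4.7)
The plan is to work in the one-point compactification $S^2 = \C \cup \{\infty\}$ and prove that $S^2 \setminus \phi([0,1])$ is connected. This is equivalent to connectivity of $\C \setminus \phi([0,1])$, because $\phi([0,1])$ is compact and hence $\infty$ is only connected (inside $S^2 \setminus \phi([0,1])$) to the unbounded component of $\C \setminus \phi([0,1])$, so an extra bounded component in $\C$ would survive as a separate component in $S^2$. The central tool is Janiszewski's theorem: if $A, B \subset S^2$ are compact sets with $A \cap B$ connected, then any two points lying in the same component of $S^2 \setminus A$ and also in the same component of $S^2 \setminus B$ must lie in the same component of $S^2 \setminus (A \cup B)$.

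Given this, I would argue by contradiction through a bisection scheme. Assuming that $p, q \in S^2 \setminus \phi([0,1])$ lie in distinct components, split $[0,1]$ as $[0,1/2] \cup [1/2,1]$. The images $\phi([0,1/2])$ and $\phi([1/2,1])$ meet only at $\phi(1/2)$, a single (hence connected) point, so the contrapositive of Janiszewski's theorem forces $p$ and $q$ to lie in distinct components of $S^2 \setminus \phi(I)$ for at least one of the two halves $I$. Iterating yields a nested sequence of closed subintervals $I_n \subset [0,1]$ of length $2^{-n}$ with the same separation property. Since $\phi$ is uniformly continuous on $[0,1]$, the diameters of the compact sets $\phi(I_n)$ tend to $0$, so eventually $\phi(I_n)$ is contained in a small open disk $D \subset S^2$ disjoint from $\{p,q\}$; then $S^2 \setminus D$ is connected, contains both $p$ and $q$, and is included in $S^2 \setminus \phi(I_n)$, contradicting the separation of $p$ from $q$.

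The main obstacle is really Janiszewski's theorem itself, which is a genuinely nontrivial piece of plane topology, typically established via winding-number arguments or via Alexander duality. In the present context, since the statement is explicitly attributed to \cite[Lemma 1.8]{Kolev} as a classical fact, one could legitimately regard the whole argument as standard and simply refer to that source; the bisection scheme above nevertheless makes the structure of the proof transparent, and reduces the topological content of the lemma to a single invocation of Janiszewski together with a compactness argument on $[0,1]$.
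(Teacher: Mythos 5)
Your argument is correct, and it is worth noting that the paper itself offers no proof of this lemma at all: it is stated with a bare citation to Kolev's lecture notes and used as a black box. Your write-up therefore supplies more than the paper does. The reduction to the sphere is handled correctly (the components of $S^2\setminus K$ for compact $K\subset\C$ are exactly the bounded components of $\C\setminus K$ together with the unbounded one enlarged by $\infty$), the bisection step is sound because injectivity of $\phi$ guarantees that $\phi([0,1/2])\cap\phi([1/2,1])=\{\phi(1/2)\}$ is connected, so the contrapositive of Janiszewski's theorem applies at every stage, and the uniform-continuity endgame (trapping $\phi(I_n)$ in a disk avoiding $p$ and $q$, whose complement in $S^2$ is connected) correctly closes the contradiction. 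This is the classical proof that no arc separates the plane, and the only nontrivial input you do not prove is Janiszewski's theorem, which you are entitled to cite just as the authors cite the whole lemma; your version simply pushes the citation one level deeper while making the reduction explicit. The two treatments buy different things: the paper's citation keeps the exposition short, while your argument isolates exactly which piece of plane topology the lemma really rests on.
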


The second one states as follows.

\begin{lemme}\label{lem-Jordan-curve-seq-comp}Let $G$ be a finite union of pairwise disjoint Jordan arcs or Jordan curves. There exists a sequence $(K_n)_n$ of compact subsets of $G$, each of them with connected complement, such that for any compact set $K\subset G$, with connected complement, there exists $n\in \N$ such that $K\subset K_n$.
\end{lemme}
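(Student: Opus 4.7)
The plan is to build the sequence $(K_n)_n$ by a discrete-scale parametrization of each Jordan piece. For each $i\in\{1,\ldots,m\}$, let $\gamma_i\colon S_i\to J_i$ be a homeomorphism with $S_i=[0,1]$ if $J_i$ is a Jordan arc and $S_i=\T$ if $J_i$ is a Jordan curve. For each integer $n\geq 1$, subdivide $S_i$ into $n$ closed sub-intervals (respectively closed sub-arcs) $I_{i,n,0},\ldots,I_{i,n,n-1}$ of equal length. To every pair $(n,P)$ with $n\geq 1$ and $P=(P_1,\ldots,P_m)$ where $P_i\subset\{0,\ldots,n-1\}$ --- calling $P$ \emph{admissible} if $P_i$ is a \emph{proper} subset of $\{0,\ldots,n-1\}$ whenever $J_i$ is a Jordan curve --- I associate the compact set
\[
K_{n,P}:=\bigcup_{i=1}^m\bigcup_{j\in P_i}\gamma_i(I_{i,n,j}).
\]
This family is countable, and I take $(K_n)_n$ to be an enumeration of it.

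The first step is to check that each $K_{n,P}$ has connected complement. The connected components of $\bigcup_{j\in P_i}\gamma_i(I_{i,n,j})$ correspond to the maximal runs of consecutive indices in $P_i$ (cyclically, when $J_i$ is a Jordan curve); each such component is a closed sub-arc of $J_i$, hence a Jordan arc, \emph{provided} the union is not all of $J_i$ --- and the admissibility condition rules out precisely that degenerate case. Since the $J_i$'s are pairwise disjoint, $K_{n,P}$ is then a finite pairwise disjoint union of Jordan arcs. I would establish by induction on the number of pieces that any such set has connected complement: the base case is Lemma~\ref{geo-lemma1}, and the inductive step rests on the Mayer--Vietoris sequence applied to the open sets $\C\setminus A$ and $\C\setminus B$ for disjoint compact sets $A,B$. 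Using $(\C\setminus A)\cup(\C\setminus B)=\C$, the sequence collapses to
\[
0\to\tilde H_0(\C\setminus(A\cup B))\to\tilde H_0(\C\setminus A)\oplus\tilde H_0(\C\setminus B)\to 0,
\]
which gives the desired conclusion.

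The second step is to show that any compact $K\subset G$ with connected complement is contained in some $K_{n,P}$. Set $K_i:=K\cap J_i$. For each Jordan curve $J_i$, I first argue $K_i\neq J_i$: otherwise $J_i\subset K$, and the bounded component $U$ of $\C\setminus J_i$ satisfies $K\cap U\subset\bigcup_{j\neq i}J_j$, a finite union of one-dimensional sets having empty interior in $\C$; hence $U\setminus K$ is non-empty, surrounded by $J_i\subset K$, and thus constitutes a hole of $K$, contradicting the connected complement assumption. Consequently there exists a closed sub-arc $\bar A_i\subset J_i\setminus K_i$ of positive length. Choosing $n$ large enough that, for every such $i$, some $\gamma_i(I_{i,n,j_i(n)})$ is contained in $\bar A_i$, I set $P_i=\{0,\ldots,n-1\}\setminus\{j_i(n)\}$ for Jordan curves $J_i$ and $P_i=\{0,\ldots,n-1\}$ for Jordan arcs $J_i$. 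Then $K_i\subset\bigcup_{j\in P_i}\gamma_i(I_{i,n,j})$ for every $i$, so $K\subset K_{n,P}$.

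The chief obstacle, as I see it, is the topological fact underlying Step~1: a finite pairwise disjoint union of Jordan arcs does not separate the plane. The Mayer--Vietoris argument is clean, but it hinges on the admissibility condition precisely ensuring that, for Jordan curves $J_i$, the retained part of $J_i$ is genuinely a disjoint union of Jordan arcs rather than still a full Jordan curve.
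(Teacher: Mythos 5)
Your proof is correct, and it follows the same overall strategy as the paper: build a countable family of compact subsets of $G$, each a finite pairwise disjoint union of Jordan arcs obtained by deleting a small sub-arc from each Jordan curve, and show this family is cofinal among compact subsets of $G$ with connected complement. The differences are in the bookkeeping and the level of detail. The paper indexes its family by a single integer $n$, deleting from each Jordan curve a sub-arc of width $\eta_n\to 0$ around the $n$-th term of a dense sequence of points, whereas you take all equal subdivisions into $n$ pieces together with all admissible index sets; your family is more redundant but the cofinality check is the same in spirit. You also supply two facts the paper leaves implicit: that a finite pairwise disjoint union of Jordan arcs has connected complement (the paper invokes only Lemma~\ref{geo-lemma1}, which covers a single arc; your Mayer--Vietoris induction works, though the paper's Lemma~\ref{lem-union-omega-convex} with $\Omega=\C$ gives the inductive step more elementarily), and that a compact $K\subset G$ with connected complement must omit a nonempty open sub-arc of each Jordan curve (your hole argument via the bounded complementary component of $J_i$ is exactly the right justification for the paper's closing ``it is clear that'').
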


\begin{proof}Let us write $G=\tilde{G}\cup \bigcup _{k=1}^p G_k$, where $\tilde{G}$ is a finite union of Jordan arcs and where each $G_k$ is a Jordan curve, with $G_k\cap \tilde{G}=\emptyset$ and $G_k\cap G_{k'}=\emptyset$, $1\leq k\neq k'\leq p$. Let $(z^{(1)}_n,\ldots,z^{(p)}_n)_n\subset G$ be a dense sequence in $G_1\times \ldots \times G_p$, and let $(\eta_n)_n$ be a sequence in $(0,1)$, tending to $0$. For each $1\leq k\leq p$, let $\gamma_k:[0,1]\to G_k$ be a continuous mapping, injective on $[0,1)$, with $\gamma_k(0)=\gamma_k(1)$, and let $r^k_n:=\gamma_k^{-1}(z^{(k)}_n)$. For any $n\in \N$ and any $1\leq k\leq p$, we define the set
\[
\Gamma^k_n:=\gamma^k\left([0,r^k_n-\eta_n]\cup [r^k_n - \eta_n,1]\right).
\]
By definition, for any $n\in \N$, the set $K_n:=\tilde{G}\cup\bigcup _{k=1}^p\Gamma^k_n$ is a finite disjoint union of Jordan arcs, hence it is compact and has connected complement by Lemma \ref{geo-lemma1}. Now it is clear that any compact subset of $G$, with connected complement, is contained in some $K_n$, $n\in \N$.
\end{proof}

%By the assumption on $\partial \Omega$, any compact set $K\subset \partial \Omega$ is contained in finitely many disjoint Jordan arcs or closed Jordan curves. Using that a finite union of disjoint compact sets with connected complement has connected complement, we deduce from the previous lemma that for any compact set $K\subset \partial \Omega$ with connected complement and any \textit{injective} function $\phi \in \CC(K)$, $\phi(K)$ also has connected complement. Moreover, by assumption on $\partial \Omega$, there exists a sequence $(K_n)_n \subset \partial \Omega$ of compact subsets with connected complement such that any $K\in \MM$ is contained in some $K_n$ (the proof is not difficult).

\begin{proof}[Proof of Corollary \ref{coro-Jordan-union}]Let us write
\[
G=\bigcup_{k\geq 1}g_k\quad \text{and}\quad G_n=\bigcup_{k=1}^ng_k
\]
where each $g_k$ is a Jordan arc or a Jordan curve, with $g_k\cap g_{k'}=\emptyset $ for any $k\neq k'$. First, if $G$ is replaced with any $G_n$, $n\geq 1$, then, by Lemma \ref{lem-Jordan-curve-seq-comp}, we can assume that $\MM$ is countable. Thus it directly follows from Proposition \ref{thm-general} (more precisely Remark \ref{rem-residual}) that the set $\UU_n$ of $(\Phi|G_n,\MM_n)$-universal functions is a dense $G_{\delta}$-subset of $H(\Omega)$, for any $n\geq 1$, where $\MM_n$ denotes the set of all compact sets in $\MM$ which are contained in $G_n$. Now, since $G$ is assumed to be a uniformly separated union of the $g_k$'s, any set of $\MM$ is an element of $\MM_n$ for some $n$, $n\geq 1$. Therefore, it is easily seen that the set $\UU$ of all $(\Phi,\MM)$-universal functions coincides with the intersection of all the $\UU_n$, and hence is also a dense $G_{\delta}$-subset of $H(\Omega)$ by Baire's theorem.
\end{proof}

\medskip

It is natural to wonder whether the sufficient condition for the existence of $(\Phi,\MM)$-universal functions given by Proposition \ref{thm-general} or Corollary \ref{coro-Jordan-union} is also necessary. The next proposition tells us that it is the case if $\Phi$ is eventually injective. More generally, $\Phi$ cannot be "too much non-injective".

\begin{prop}\label{prop-nec-G-empty-interior}%Let $\Omega$ be a domain, $G\subset \C$ a closed set with empty interior, and let $\Phi=(\phi_n)$ be a sequence of continuous maps from $G$ into $\Omega$. Let also 
Assume that $\MM$ is a countable family of compact subsets of $G$. If there exists a $(\Phi,\MM)$-universal function in $H(\Omega)$, then for any compact set $L\in \Omega$, any compact set $K\in \MM$, and any disjoint compact subsets $I_1,I_2\in \MM$ with $K\cup I_1 \cup I_2 \in \MM$, there exist infinitely many $n\in \N$ such that the following two conditions hold:
	\begin{enumerate}
		\item $\phi_n(K)\cap L=\emptyset$;
		\item $\phi_n(I_1)\cap \phi_n(I_2)=\emptyset$;
	\end{enumerate}
\end{prop}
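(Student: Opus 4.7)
The plan is to produce a single continuous test function on $K' := K \cup I_1 \cup I_2 \in \MM$ that simultaneously witnesses both conditions (1) and (2), and then to invoke the density of $\{f \circ \phi_n : n \in \N\}$ in $\CC(K')$ for some $(\Phi,\MM)$-universal function $f$. I would fix such an $f$ and set $M := \sup_{z \in L}|f(z)|$, which is finite since $L$ is a compact subset of $\Omega$.

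Since $I_1$ and $I_2$ are disjoint compact subsets of the compact metric space $K'$, Urysohn's lemma yields a continuous function $\chi \colon K' \to [0,1]$ with $\chi|_{I_1} \equiv 0$ and $\chi|_{I_2} \equiv 1$. I would then consider the real-valued test function $h := (M+2) + \chi \in \CC(K')$, so that $h \geq M+2$ everywhere on $K'$, with $h|_{I_1} \equiv M+2$ and $h|_{I_2} \equiv M+3$. By universality of $f$, the set of indices $n$ for which $\sup_{z \in K'} |f(\phi_n(z)) - h(z)| < 1/2$ must be infinite: if only finitely many such $n$ existed, then removing these finitely many functions from the dense set $\{f \circ \phi_n\}_{n \in \N} \subset \CC(K')$ would still yield a dense subset, yet no element of that subset would approximate $h$ within $1/2$, a contradiction.

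For any such $n$, condition (1) follows because for every $z \in K$ one has $|f(\phi_n(z))| \geq h(z) - 1/2 \geq M + 3/2 > M$, whereas $|f(w)| \leq M$ for every $w \in L$; hence $\phi_n(z) \notin L$, giving $\phi_n(K) \cap L = \emptyset$. Condition (2) is obtained in a similar vein: for any $w_1 \in I_1$ and $w_2 \in I_2$, the estimates $|f(\phi_n(w_1)) - (M+2)| < 1/2$ and $|f(\phi_n(w_2)) - (M+3)| < 1/2$ combined with the separation $|(M+3)-(M+2)| = 1$ prevent $f(\phi_n(w_1)) = f(\phi_n(w_2))$, so in particular $\phi_n(w_1) \neq \phi_n(w_2)$ for every such pair, i.e.\ $\phi_n(I_1) \cap \phi_n(I_2) = \emptyset$.

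The only real (and mild) difficulty I anticipate is the coordination of the test function: it must push $|h|$ above the ceiling $M$ on $K$ in order to force $\phi_n(K)$ out of $L$, while at the same time retaining enough spread on $I_1 \cup I_2$ to forbid the collapse of $\phi_n(I_1)$ onto $\phi_n(I_2)$. The shifted Urysohn function resolves both requirements in one stroke, and the passage from \emph{some} $n$ to \emph{infinitely many} $n$ is a standard consequence of the density of $\{f \circ \phi_n\}_n$ in $\CC(K')$.
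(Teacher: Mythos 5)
Your proof is correct and follows essentially the same strategy as the paper's: both exploit the density of $\{f\circ\phi_n : n\in\N\}$ in $\CC(K\cup I_1\cup I_2)$ applied to a target function that is large in modulus everywhere (forcing (1), since $|f|\leq M$ on $L$) and takes well-separated values on $I_1$ versus $I_2$ (forcing (2)). The only difference is the choice of target: the paper approximates $z\mapsto j+z$ to within $\tfrac12\dist(I_1,I_2)$ and lets the injectivity of the identity map do the separating, whereas your shifted Urysohn function encodes the separation directly in the values --- a marginally cleaner variant that also packages the ``infinitely many $n$'' step as a single density argument rather than a sequence of targets indexed by $j$.
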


\begin{proof}Let us fix $L$, $K$ and $I_1,I_2$ as in the statement. %Upon enlarging $L$ we can assume that it is $\Omega$-convex. 
	We set $\delta=\text{dist}(I_1,I_2)>0$. If $f\in H(\Omega)$ is a $(\Phi,\MM)$-universal function, then for any $j\geq 1$, there exists $n_j\in \N$ such that
	\begin{equation}\label{eqnec1}
	\left|f\circ \phi_{n_j} (z) - (j+z)\right| < \frac{\delta}{2},\quad z\in K\cup I_1 \cup I_2.
	\end{equation}
	Thus for any $z\in K$, $|f\circ \phi_{n_j}(z)|> j+|z|-\delta$ and we can find $J\in \N$ large enough so that $\inf_{K} |f\circ \phi_{n_j}|\geq \sup _L |f|$ for any $j\geq J$, whence $\phi_{n_j}(K)\cap L =\emptyset$ for any $j\geq J$.
	
	For the second condition, let us assume by contradiction that there exists $J'\in \N$, $J'\geq J$, such that $\phi_{n_j}(I_1)\cap \phi_{n_j}(I_2)\neq \emptyset$ for any $j\geq J'$. Then there exists $(\zeta _j)_j\subset I_1$ and $(\xi_j)_j\in I_2$ such that $\phi_{n_j}(\zeta_j)=\phi_{n_j}(\xi_j)$. Now, by \eqref{eqnec1} we get that for any $j\geq J'$ and any $z\in I_1 \cup I_2$,
	\[
	\left|f\circ \phi_{n_j} (\zeta_j) - (j+\zeta_j)\right| < \frac{\delta}{2}\quad \text{and} \quad \left|f\circ \phi_{n_j} (\xi_j) - (j+\xi_j)\right| < \frac{\delta}{2},
	\]
	hence $\left|\zeta_j - \xi_j\right| < \delta$. This contradicts the definition of $\delta$.
\end{proof}

\begin{ex}\label{rem-nec-compo}{\rm Let $\Omega=\D$, $G=\T$ and let $\MM$ be the set of all compact subsets of $\T$, different from $\T$. The assumption (2) in Proposition \ref{prop-nec-G-empty-interior} is not satisfied if there exist $\zeta,\xi\in \T$ with $\zeta\neq \xi$, and two sequences $(\zeta_n)\to \zeta$ and $(\xi_n)\to \xi$ in $\T$ such that $\phi_n(\zeta_n)=\phi_n(\xi_n)$ for any $n$.

For example, let $(r_n)_n$ be any sequence in $[0,1)$ tending to $1$. If $\Psi=(\psi_n)_n$ is defined for $\zeta \in \T$ by $\psi_n(\zeta)=r_n\zeta^k$, for some fixed $k\geq 2$, or by $\psi_n(\zeta)=r_n\zeta^n$, then there is no $(\Psi,\MM)$-universal functions.

This can be compared with the existence of Abel universal functions (Example \ref{Abel}), that are $(\Phi,\MM)$-universal functions with $\Phi=(\phi_n)_n$ defined by $\phi_n(\zeta)=r_n\zeta$, $\zeta\in \T$.}
\end{ex}

%Let us assume that $G=\T$ and that $\MM$ is the set of all compact subsets of $\T$, different from $\T$. %Geometrically speaking, the previous proposition indicates that if each $\phi_n$ makes some loop $\LL_n$, then the existence of $(\Phi,\MM)$-universal functions requires $\phi_n^{-1}(\LL)$ to be contained in a \emph{small} interval. 
To conclude this section, we shall exhibit sequences $\Phi$ of continuous mappings from $\T$ to $\D$ for which $(\Phi,\MM)$-universal functions exist, even if no subsequence of $\Phi$ is eventually injective. This indicates that the previous proposition is not far from being optimal, and this highlights the contrast with the setting considered in the previous section (see the comment before Theorem \ref{thm-simply-connected-gl-GM}). For simplicity, we will assume that $G$ is the unit circle $\T$ but one could also consider that $G$ is a union of uniformly separated Jordan arcs or Jordan curves.

We recall the following notation. For $\phi:\T\to \C$ continuous and $K\subset \T$, we denote by $\widehat{\phi(K)}$ the union of $\phi(K)$ and all bounded connected components of $\C\setminus \phi(K)$. Observe that since the set $\phi(K)$ is compact, $\C\setminus \phi(K)$ has only one unbounded connected component and that $\widehat{\phi(K)}$ is a compact set with connected complement. Without possible confusion, we will use the following notations, only in the next statement, in its proof, and in Example \ref{ex-4-12}: if $0\leq \alpha\leq \beta \leq 2\pi$, we will denote by $[e^{i\alpha},e^{i\beta}]$ the set $\{e^{i\theta}:\,\alpha\leq \theta \leq \beta\}$, by $]e^{i\alpha},e^{i\beta}]$ the set $\{e^{i\theta}:\,\alpha< \theta \leq \beta\}$, \emph{et caetera}.

\begin{prop}\label{suff-plus}Let $G=\T$ and let $\MM$ be the set of all compact subsets of $\T$, different from $\T$. We assume that for any compact set $L\subset \Omega$, any proper closed arc $K=[e^{i\alpha},e^{i\beta}]\subset \T$ and any $\delta >0$, there exists $n\in \N$ such that the following two conditions hold:
	\begin{enumerate}
		\item $\phi_n(K)\cap L=\emptyset$;
		\item there exist $\alpha\leq \delta_1 \leq \delta_2 < \delta_3 \leq \delta_4 < \ldots < \delta_{2l-1} \leq \delta _{2l} \leq \beta$, such that if we set $I_m=\left[e^{i\delta_{2m-1}},e^{i\delta_{2m}}\right]$, then
		\begin{enumerate}
			\item $\text{length}(I_m)\leq \delta$, $m=1,\ldots,l$;
			\item $\widehat{\phi_n(I_m)}\cap \phi_n(K\setminus I_m)=\emptyset$, $m=1,\ldots,l$;
			\item $\phi_n|U$ is injective where $U=K\setminus \cup_mI_m$.
		\end{enumerate}
		%$for any $w \in \C$, the equation $\phi_n(\zeta)=w$ has at most $1$ solution in $K$, except at most for one value $w$, in which case $\phi_n^{-1}(\{w\})$ is contained in an interval of length $\delta$.
	\end{enumerate}
Then there exist $(\Phi,\MM)$-universal functions.
\end{prop}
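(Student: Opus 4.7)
The plan is to verify the Birkhoff-type criterion of Fact~\ref{Birk-G-empty}. A preliminary reduction replaces $\MM$ by the countable family $\MM'$ of proper closed arcs of $\T$ with rational endpoints: every $K'\in\MM$ sits inside some $K\in\MM'$, and each $h'\in\CC(K')$ extends to $\CC(K)$ by Tietze, so $(\Phi,\MM')$-universality implies $(\Phi,\MM)$-universality. So fix $\varepsilon>0$, $K=[e^{i\alpha},e^{i\beta}]\in\MM'$, a compact set $L\subset\Omega$ (which we may assume lies in $\MM(\Omega)$ by Lemma~\ref{exhaustion-convenient}), $h\in\CC(K)$ and $g\in H(\Omega)$; the aim is to produce $f\in H(\Omega)$ and $n\in\N$ with $\|f\circ\phi_n-h\|_K\le\varepsilon$ and $\|f-g\|_L\le\varepsilon$.

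By uniform continuity of $h$, choose $\delta>0$ so that $h$ has oscillation less than $\varepsilon/4$ on any subarc of $K$ of arc-length at most $2\delta$, and apply the hypothesis with these $K,L,\delta$ to obtain $n\in\N$ and arcs $I_1,\ldots,I_l\subset K$ satisfying (1)--(2c). Pick base points $z_m\in I_m$ and build a continuous auxiliary $\tilde h\in\CC(K)$ that equals $h(z_m)$ on $I_m$, equals $h$ outside short collars inside $U$ adjacent to the endpoints of each $I_m$, and interpolates linearly (in arc-length) on those collars. By the choice of $\delta$, one gets $\|\tilde h-h\|_K\le\varepsilon/2$.

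Form the compact set
\[
E := L\cup\phi_n(\overline U)\cup\bigcup_{m=1}^l\widehat{\phi_n(I_m)}
\]
and define $\kappa$ on $E$ by $\kappa=g$ on $L$, $\kappa=\tilde h\circ\phi_n^{-1}$ on $\phi_n(\overline U)$ (well defined because any multiplicity of $\phi_n|_{\overline U}$ can only occur between the two endpoints of a same $I_m$, where $\tilde h$ takes the single value $h(z_m)$), and $\kappa\equiv h(z_m)$ on each $\widehat{\phi_n(I_m)}$. Condition (1) gives $L\cap\phi_n(K)=\emptyset$; condition (2b) forces each $\widehat{\phi_n(I_m)}$ to be disjoint from $\phi_n(\overline U)$ except at the two points $\phi_n(e^{i\delta_{2m-1}}),\phi_n(e^{i\delta_{2m}})$, and from the other $\widehat{\phi_n(I_{m'})}$ (any overlap would force nesting by the connectedness of the hulls, but nesting would place some $\phi_n(I_m)$ inside another hull, contradicting (2b)). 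At those two shared endpoints both prescriptions of $\kappa$ give $h(z_m)$, so $\kappa$ is continuous on $E$. The $\Omega$-convexity of $E$ is then derived from the $\Omega$-convexity of $L$, the connected complement of $\phi_n(\overline U)$ (a finite disjoint union of Jordan arcs, by Lemmas~\ref{geo-lemma1} and~\ref{lem-Jordan-curve-seq-comp}), and the connected complement of each $\widehat{\phi_n(I_m)}$. Mergelyan (Theorem~\ref{Mergelyan}) applied on $E$ furnishes $f\in H(\Omega)$ with $\|f-\kappa\|_E\le\varepsilon/2$, whence $\|f-g\|_L\le\varepsilon$; and for every $z\in K$ one has $\kappa(\phi_n(z))=\tilde h(z)$ (true for $z\in\overline U$ by definition, and for $z\in I_m$ because $\tilde h\equiv h(z_m)$ there), which yields $|f(\phi_n(z))-h(z)|\le\varepsilon/2+\|\tilde h-h\|_K\le\varepsilon$. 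Fact~\ref{Birk-G-empty} then delivers the desired $(\Phi,\MM)$-universal function.

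I expect the chief difficulty to lie in the $\Omega$-convexity of $E$ (together with its inclusion in $\Omega$) in the borderline configuration where some $\widehat{\phi_n(I_m)}$ wraps around part of $L$ or around a hole of $\Omega$: this is not explicitly ruled out by the hypothesis, since (2b) only separates the hulls from $\phi_n(K\setminus I_m)$. In that case one must refine $E$ by excising from each offending hull those bounded components of $\C\setminus\phi_n(I_m)$ that meet $L$ or leave $\Omega$, and then use the $\Omega$-convexity of $L$ together with the geometric constraints afforded by (2b) to show that the refined compact set still lies in $\Omega$, remains $\Omega$-convex, and continues to support the continuous target $\kappa$ so that the Mergelyan step goes through.
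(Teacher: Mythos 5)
Your proof is essentially the paper's: the same reduction to proper closed arcs, the same compact set $L\cup\phi_n(\overline{U})\cup\bigcup_m\widehat{\phi_n(I_m)}$, the same piecewise target (a constant on each hull, $h\circ\phi_n^{-1}$ on the injective part, short collars to restore continuity, with the oscillation of $h$ on arcs of length $2\delta$ absorbing the error on the $I_m$), the same appeal to Lemmas \ref{geo-lemma1} and \ref{geo-lemma2} to get connectedness of the complement by a finite induction, and the same Mergelyan/Runge step via Fact \ref{Birk-G-empty}. The only divergences are cosmetic: the paper first reduces to $\Omega=\D$ by conformal invariance, so that $\Omega$-convexity becomes ``connected complement'', and it places the interpolation collar on one side of each $I_m$ only (matching the constant $h(e^{i\delta_{2m-1}})$ at the left endpoint automatically). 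As for the ``wrap-around'' configuration you flag at the end ($L$, or a hole of $\Omega$, trapped inside some $\widehat{\phi_n(I_m)}$): the paper does not treat it either --- it passes from $\bigcup_m\widehat{\phi_n(I_m)}\cup\phi_n(U)$ to its union with $L$ ``by (1) and Lemma \ref{geo-lemma2}'', which tacitly assumes $L$ meets each hull in at most one point --- so your caveat is more candid than the published argument rather than a shortfall relative to it.
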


For the proof of this proposition, we will make use of the following easy geometric lemma. It is a straightforward consequence of Kallin's lemma about polynomial convexity of the union of polynomially convex subsets of $\C^n$, $n\geq 1$ (see, for e.g., \cite[Theorem 1]{Paepe}). Yet, since the case $n=1$ is quite simple, we outline a proof here.

\begin{lemme}\label{geo-lemma2}Let $K_1$ and $K_2$ be two compact subsets such that $K_1\cap K_2$ contains at most $1$ element. If $K_1$ and $K_2$ both have connected complement, then $K_1\cup K_2$ also has connected complement.
\end{lemme}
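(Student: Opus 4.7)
My plan is to reduce the statement to \emph{Janiszewski's theorem} on the Riemann sphere $S^2=\C\cup\{\infty\}$, which asserts that if $A$ and $B$ are closed subsets of $S^2$ whose intersection $A\cap B$ is connected, then for any two points $p,q\in S^2\setminus(A\cup B)$ separated by neither $A$ nor $B$ individually, $p$ and $q$ are also not separated by $A\cup B$. This theorem is a classical tool for detecting polynomial convexity in $\C$, and is essentially the one-variable content of Kallin's lemma mentioned just before the statement.

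The first step is the elementary observation that, for a compact set $K\subset \C$, the complement $\C\setminus K$ is connected if and only if $S^2\setminus K=(\C\setminus K)\cup\{\infty\}$ is connected: indeed, adjoining $\infty$ only merges it with the (unique) unbounded component of $\C\setminus K$, so the numbers of connected components of $\C\setminus K$ and of $S^2\setminus K$ coincide. Consequently, the hypothesis on $K_1$ and $K_2$ translates to the statement that neither $K_1$ nor $K_2$ separates $S^2$. Next, since $K_1\cap K_2$ has at most one element, it is either empty or a singleton, and in either case connected. Janiszewski's theorem thus applies to an arbitrary pair of points of $S^2\setminus(K_1\cup K_2)$ and yields that $S^2\setminus(K_1\cup K_2)$ is connected. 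Applying the equivalence from the first step once more, $\C\setminus(K_1\cup K_2)$ is connected, which is the desired conclusion.

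The only subtle point I foresee is the case $K_1\cap K_2=\emptyset$, since some formulations of Janiszewski's theorem insist that the intersection be nonempty. In practice one either adopts the standard convention that the empty set is connected, in which case Janiszewski applies directly, or one treats this case separately, e.g.\ via the Mayer--Vietoris sequence applied to the open cover $\{\C\setminus K_1,\C\setminus K_2\}$ of $\C$: from $H_1(\C)=0$ together with the connectedness of $\C$, $\C\setminus K_1$ and $\C\setminus K_2$, one reads off $H_0(\C\setminus(K_1\cup K_2))=\Z$, hence $\C\setminus(K_1\cup K_2)$ is connected. This is the only step of any delicacy in the proof.
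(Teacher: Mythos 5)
Your argument is correct, but it takes a genuinely different route from the paper's. The paper splits into the same two cases and treats them by hand: the disjoint case is delegated to Lemma~\ref{lem-union-omega-convex} (the special case of Kallin's lemma stated earlier, which applies here since $\widehat{K_1}=K_1$ and $\widehat{K_2}=K_2$ when the complements are connected), and in the case $K_1\cap K_2=\{w_0\}$ the authors enclose $K_1\setminus\{w_0\}$ and $K_2\setminus\{w_0\}$ in closed Jordan domains $V_1,V_2$ meeting only at $w_0$, then join two arbitrary points of $\C\setminus(K_1\cup K_2)$ by concatenating paths routed along the path-connected set $\partial V_1\setminus\{w_0\}$. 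You instead pass to the Riemann sphere and invoke Janiszewski's separation theorem, using that a singleton (or the empty set) is connected; your preliminary equivalence ($\C\setminus K$ connected if and only if $S^2\setminus K$ connected, for $K$ compact) is sound, and your Mayer--Vietoris fallback for the disjoint case works cleanly precisely because there $U\cup V=\C$ has trivial $H_1$ (note that this homological shortcut would not transfer as directly to the one-point case, where $U\cup V=\C\setminus\{w_0\}$ has $H_1\cong\Z$ and surjectivity of the relevant map would need an extra argument --- so it is right that you reserve it for the disjoint case only). What your approach buys is brevity and rigor outsourced to a classical named theorem, which, as you observe, is exactly the one-variable content of Kallin's lemma; what the paper's approach buys is self-containedness, at the price of the somewhat delicate construction of the separating Jordan domains $V_1,V_2$.
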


\begin{proof}
We only prove the case where $K_1\cup K_2 = \{w_0\}$, the case where $K_1$ and $K_2$ are disjoint being contained in Lemma \ref{lem-union-omega-convex}. Let $z_1,z_2 \in \C\setminus K_1\cup K_2$. By compactness of $K_1 \cup K_2$, we can find two closed Jordan domains $V_1$ and $V_2$ such that $K_1\setminus \{w\}\subset \overset{\circ}{V_1}$, $K_2\setminus \{w\}\subset \overset{\circ}{V_2}$, $V_1\cap V_2 = \{w_0\}$ and $z_1,z_2 \in \C\setminus V_1\cup V_2$. Since $V_2$ has connected complement and $V_1\setminus \{w_0\}\subset \C \setminus V_2$, there exist two continuous paths $\gamma_1$ and $\gamma_2$ such that, for $i=1,2$, one of the extremity $\xi_{i,1}$ of $\gamma_i$ is $z_i$, the other $\xi_{i,2}$ is in $\partial V_1 \setminus \{w_0\}$, and $\gamma_i \setminus \{\xi_{i,2}\}\subset \C\setminus V_1\cup V_2$. Since $V_1$ is a Jordan domain, $\partial V_1 \setminus \{w\}$ is path connected and there exists a path $\gamma \in \partial V_1 \setminus \{w\}$ whose extremities are $\xi_{1,2}$ and $\xi_{2,2}$. The union $\gamma_1\cup \gamma_2 \cup \gamma$ is a continuous path, which by construction lies in $\C\setminus (K_1 \cup K_2)$. Thus the latter set is path connected.
\end{proof}

\begin{proof}[Proof of Proposition \ref{suff-plus}]By conformal invariance, we can assume that $\Omega=\D$. In order to apply Fact \ref{Birk-G-empty}, let us fix $\varepsilon>0$, a compact set $K\subset \T$ with connected complement, a compact set $L\in \Omega$, a function $h\in \CC(K)$ and a function $g\in H(\Omega)$. By Mergelyan's theorem, we may and shall assume $h$ is a polynomial and that $K$ is an arc of the form $[e^{i\alpha},e^{i\beta}]$. It is enough to prove that there exist $f\in H(\Omega)$ and $n\in \N$ such that
\[
\sup_{z\in K}\left|f\circ \phi_n(z) - h(z)\right|\leq \varepsilon\quad \text{and}\quad \sup_{z\in L}\left|f(z) - g(z)\right|\leq \varepsilon.
\]
Since $h$ is uniformly continuous on $K$, there exists $\delta>0$ such that for any $\zeta,\xi \in K$,
\begin{equation}\label{unifcont}
\left|\zeta -\xi\right|\leq 2\delta \quad \Longrightarrow \quad \left|h(\zeta) - h(\xi)\right|\leq \frac{\varepsilon}{2}.
\end{equation}
By assumption, there exist $n\in \N$ and closed arcs $I_1,\ldots,I_l \subset K$ as in the statement (in particular satisfying (1) and (2)). Let us first prove that the compact set
\[
\bigcup _{m=1}^l\widehat{\phi_n(I_m)}\cup \phi_n(U)\cup L
\]
has connected complement. By (1) and Lemma \ref{geo-lemma2}, it is enough to prove that
\[
\bigcup _{m=1}^l\widehat{\phi_n(I_m)}\cup \phi_n(U)
\]
has connected complement. Let $J_0=\left[e^{i\alpha},e^{i\delta_{1}}\right[$, $J_l=\left]e^{i\delta_{2l}},e^{i\beta}\right]$ and $J_m=\left]e^{i\delta_{2m}},e^{i\delta_{2m+1}}\right[$, $m=1,\ldots, l-1$. Now, the assumption (2) (c) and Lemma \ref{geo-lemma1} imply that $\overline{\phi_n(J_m)}=\phi_n(\overline{J_m})$, $m=0,\ldots ,l$, has connected complement. Moreover, the assumption (2) (b) implies that $\widehat{\phi_n(I_m)}$ and $\overline{\phi_n(J_{m-1})}$ do intersect at, at most, one point. Thus, by Lemma \ref{geo-lemma2}, for any $m=1,\ldots ,l$, the set
\[
\widehat{\phi_n(I_m)}\cup \overline{\phi_n(J_{m-1})}
\]
has connected complement. Furthermore, by assumptions (2) (b) and (2) (c), for any $m'\in \{1,\ldots,l-1\}$, the sets
\[
\left(\bigcup_{m=1}^{m'}\widehat{\phi_n(I_m)}\cup \overline{\phi_n(J_{m-1})}\right)\cap \left(\widehat{\phi_n(I_{m'+1})}\cup \overline{\phi_n(J_{m'})}\right)
\]
and
\[
\left(\bigcup_{m=1}^{l-1}\widehat{\phi_n(I_l)}\cup \overline{\phi_n(J_{l-1})}\right)\cap \overline{\phi_n(J_{l})},
\]
contain at most one element. Therefore Lemma \ref{geo-lemma2} and a finite induction allow us to conclude that
\[
\bigcup _{m=1}^l\widehat{\phi_n(I_m)}\cup \overline{\phi_n(U)}=\bigcup _{m=1}^l\widehat{\phi_n(I_m)}\cup \phi_n(U)
\]
has connected complement.

Let us now fix $\eta_m >0$, $m=1,\ldots, l$ such that for any $m=1,\ldots, l$
\begin{enumerate}[(a)]
		\item $\delta_{2m} + \eta _m < \delta _{2m+1}$;
		\item $\delta_{2m}-\delta_{2m-1} + \eta_m < 2\delta$.
	\end{enumerate}
	For any $m=1,\ldots,l$, let also $h_m$ be any continuous map such that 
	\[
	h_m\left(\left[e^{i\delta_{2m}},e^{i(\delta_{2m}+\eta_m)}\right]\right)=\left[e^{i\delta_{2m-1}},e^{i(\delta_{2m}+\eta_m)}\right],
	\]
	$h_m\left(e^{i\delta_{2m}}\right)=e^{i\delta_{2m-1}}$ and $h_m\left(e^{i(\delta_{2m}+\eta_m)}\right)=e^{i(\delta_{2m}+\eta_m)}$. Then we consider the function $\tilde{h}:\phi_n(K)\to \C$ defined by
	\[
	\tilde{h}(z)=\left\{\begin{array}{ll}
	g(z) & \text{if } z\in L\\
	h\left(\phi_n^{-1}(z)\right) & \text{if } z\in \phi_n(J_0)\\
	h\left(e^{i\delta_{2m-1}}\right) & \text{if } z\in \widehat{\phi_n(I_m)},\,m=1,\ldots,l\\
	h\left(h_m\circ \phi_n^{-1}(z)\right) & \text{if } z\in\phi_n\left(\left]e^{i\delta_{2m}},e^{i(\delta_{2m}+\eta_m)}\right]\right),\,m=1,\ldots,l\\
	h\left(\phi_n^{-1}(z)\right) & \text{if } z\in \phi_n\left(\left[e^{i(\delta_{2m}+\eta_m)},e^{i\delta_{2m+1}}\right[\right),\,m=1,\ldots,l-1\\
	h\left(\phi_n^{-1}(z)\right) & \text{if } z\in \phi_n(J_l).
	\end{array}\right.
	\]
	
	One can check that by construction $\tilde{h}$ is continuous on
	\[
	A:=\bigcup _{m=1}^l\widehat{\phi_n(I_m)}\cup \phi_n(U)\cup L,
	\]
	holomorphic in its interior, and we can thus apply Runge's theorem to get a function $f\in H(\Omega)$ such that
	\[
	\sup_{z\in A}\left|f(z) - \tilde{h}(z)\right|\leq \frac{\varepsilon}{2},
	\]
	Since $\phi_n(K) \subset A$, we get
	\[
	\sup_{\zeta\in K}\left|f(\phi_n(z)) - \tilde{h}(\phi_n(z))\right|\leq \frac{\varepsilon}{2}.
	\]
	By construction, it is clear that for any $\zeta\in J_0\cup J_l \cup \bigcup_{m=1}^{l-1}\left[e^{i(\delta_{2m}+\eta_m)},e^{i\delta_{2m+1}}\right]$, one has
	\[
	\left|f\circ\phi_n(z) - h(\zeta)\right| \leq \frac{\varepsilon}{2}.
	\]
	Moreover, for $\zeta \in I_m$, $m=1,\ldots,l$, since $\phi_n(\zeta) \in \widehat{\phi_n(I_m)}$ and because of \eqref{unifcont} and the choice of $\eta_m$, we have
	\[
	\left|f\circ \phi_n(\zeta) - h(\zeta)\right| \leq \left|f\circ \phi_n(\zeta) - h(e^{i\delta_{2m-1}})\right| + \left|h(e^{i\delta_{2m-1}}) - h(\zeta)\right| \leq \varepsilon.
	\]
	The same argument gives that for any $\zeta \in \left[e^{i\delta_{2m}},e^{i(\delta_{2m}+\eta_m)}\right]$, $m=1,\ldots,l$,
	\[
	\left|f\circ \phi_n(\zeta) - h(\zeta)\right| \leq \varepsilon,
	\]
	which concludes the proof.
\end{proof}

\medskip

In order to illustrate Proposition \ref{suff-plus}, let us give an example of a sequence $(\phi_n)_n$ of continuous mappings from $\T$ to $\D$, that is not eventually injective, and for which $(\Phi,\MM)$-universal functions exist (where $\MM$ is the set of all compact subsets of $\T$, different from $\T$).

\begin{ex}\label{ex-4-12}{\rm %In the following example, we whall identify a continuous function $\phi$ on $\T$ with the function $\vartheta\mapsto \phi(e^{2\pi i\vartheta})$, continuous on $[0,1[$.
For any $n\in \N$, $n\geq 2$, we set $\psi_n(e^{i\vartheta})=e^{i\frac{n\vartheta-\pi}{n-1}}$, $\vartheta\in [\pi/n,2\pi - \pi/n]$, and define
\[
\phi_n(z):=\left\{\begin{array}{ll}
1-\frac{3}{2n}-\frac{\bar{z}^n}{2n} & \text{if }z\in [1,e^{i\pi/n}]\\
\left(1-\frac{1}{n}\right)\psi_n(z) & \text{if }z\in [e^{i\pi/n},e^{2i\pi-i\pi/n}]\\
1-\frac{3}{2n}-\frac{z^n}{2n} & \text{if }z\in [e^{2i\pi-i\pi/n},1].
\end{array}\right.
\]
The image of each $\phi_n$ is the union of two circles, tangent at $1-1/n$. It is clear that the sequence $(\phi_n)_n$ satisfies the assumptions of Proposition \ref{suff-plus}. Moreover, it is also clear that for any arc $K\subset \T$ with $1\in K$, only finitely many $\phi_n$ are injective on $K$, hence $(\phi_n)_n$ fails to be eventually injective.

What makes the example satisfy the assumptions of Proposition \ref{suff-plus} is the fact that on the preimage of the small circle of $\phi_n(\T)$ (i.e. on $[1,e^{i\pi/n}]$ and $[e^{2i\pi-i\pi/n},1]$)), the derivative of the map $z\mapsto \text{arg}(\phi_n(z))$ is large. In particular, it appears that the parametrization of $\phi_n(\T)$ by $\phi_n$ is of importance: one could find another sequence $(\psi_n)_n$, with $\psi_n(\T)=\phi_n(\T)$ for any $n\in \N$, but so that the sequence $(\psi_n)_n$ is not universal.

The sequence $(\phi_n)_n$ can easily be modified to get more sophisticated examples.
}
\end{ex}

\medskip

It may seem natural to seek for necessary and sufficient condition for the existence of $(\Phi,\MM)$-universal functions, even in the case where $\Omega=\D$, $G=\T$ and $\MM$ is the set of all compact subsets of $\T$, different from $\T$. Yet this problem seems (at least to us) too general to expect a solution.

\subsubsection*{Application to the boundary behaviour of holomorphic functions}

Corollary \ref{coro-Jordan-union} can be interpretated in terms of boundary behaviour of holomorphic functions in several manners. Let us illustrate this by the following examples.

\medskip

(1) It is well-known that for any $f \in H(\C^*)$ having an essential singularity at $0$, there exists a sequence $(z_n)_n \subset \C^*$, converging to $0$ such that the set $\{f(z_n):\,n\in \N\}$ is dense in $\C$. %The previous results tell us that for any sequence $(z_n)$ tending to $0$ and any sequence $(w_n)$ tending to infinity, there exists a dense $G_{\delta}$ set of functions in $H(\C^*)$ such that the sets $\{f(z_n):\,n\in \N\}$ and $\{f(w_n):\,n\in \N\}$ are dense in $\C$.
One can immediately deduce from Corollary \ref{coro-Jordan-union} that, given a sequence of complex numbers $(\lambda_l)_l$, with no accumulation points, and sequences of complex numbers $(z_n^l)_n$, $l=1,2,\ldots$, such that $z_n^l \to \lambda_l$ for any $l=1,2,\ldots$, there exists a dense $G_{\delta}$ set of functions $f\in H(\C\setminus \{\lambda_n:\,n=1,2,\ldots\})$ such that the sets $\{f(z_n^l):\,n\in \N\}$, $l=1,2,\ldots$, are dense in $\C$.
		
\medskip

(2) Let $\Omega=\C\setminus \D$, $G=\T$ and $\Phi=(\phi_n)_n$ from $\T$ to $\D$, defined by $\phi_n(z)=R_nz$, where $(R_n)_n$ is a sequence in $]1,+\infty[$ converging to $1$. Corollary \ref{coro-Jordan-union} gives us the existence of functions $f$ in $H(\Omega)$ such that for any compact sets $K\subset \T$, the set $\{f\circ \phi_n:\,n\in \N\}$ is dense in $\CC(K)$.

\medskip
		
More generally, let $\Omega=\C\setminus \cup_{l\in \N}D_l$ where $(D_l)_l$ is uniformly separated closed discs, and let $G=\cup _{l\in\N}\partial D_l$. Let also $(\phi_n)_n$ be a sequence of injective continuous mappings from $G$ to $\Omega$. Then there exist functions $f$ in $H(\Omega)$ such that for any compact set $K\subset G$, with connected complement, the set $\{f\circ \phi_n:\,n\in \N\}$ is dense in $\CC(K)$. Note that each $\phi_n$ can be defined independently on each $\partial D_l$. Moreover, the discs $D_l$ can be replaced by Jordan domains.
		
For instance, choosing the functions $\phi_n$ such that $\phi_n(z)\to z$ as $n\to \infty$, we obtain examples of functions holomorphic on domains, possibly infinitely connected, with very wild boundary behaviour.

\medskip
		
(3) Let $\Omega =\C\setminus [0,1]$. The results of this section yields the existence of a $G_{\delta}$-dense set of functions $f$ in $H(\Omega)$ such that the sequence of functions $(f(z+i/n))$ is dense in $\CC([0,1])$.

\medskip

(4) Let us fix a sequence $(r_n)_n$ in $(0,1)$, tending to $1$, and let $\Omega$ be a bounded finitely connected domain with $\CC^1$ boundary (i.e. each connected complement of $\partial \Omega$ is a $\CC^1$ Jordan curve, that is the image of $\T$ by a $\CC^1$-diffeomorphism). Let $\MM$ be the set of all compact subsets of $\partial \Omega$ with connected complement. For $z\in \partial \Omega$, let us denote by $\nu_z$ the inward unit normal vector to $\partial \Omega$ at $z$, and set $\phi_n(z)=z+(1-r_n)\nu_z$, $z\in \partial \Omega$. Note that $\phi_n(z)\to z$ as $n\to \infty$ for any $z\in \partial \Omega$. It is easily checked that $(\phi'_n)_n$ tends to the constant map $1$ uniformly on $\partial \Omega$ so that, for any $n$ large enough, $\Phi:=(\phi_n)_n$ is a sequence of injective continuous mappings from $\partial \Omega$ to $\Omega$. Then, for such $n$, $\phi_n(\partial \Omega)$ defines the boundary of a compact subset $K_n$ of $\Omega$ such that $(K_n)_n$ is an exhaustion of $\Omega$. Finally, it is clear that for any $K\in \MM$ and any compact set $L\in \Omega$, there exists $n\in \N$ such that $\phi_n(K)\cap L=\emptyset$.

Thus Corollary \ref{coro-Jordan-union} implies the existence of some $(\Phi,\MM)$-universal function. By construction, the boundary behaviour of such function is erratic near any compact subset of $\partial \Omega$.

Up to non-essential modifications, the previous can be extended to infinitely connected domain whose boundary is a uniformly separated family of $\CC^1$ Jordan curves.

\medskip

(5) Let now $\Omega$ be a Jordan domain and let $H:\partial \Omega \times [0,1) \to \Omega$ be continuous and such that $H(z,r)\to z$ as $r\to 1$, for any $z\in \partial \Omega$, and such that for any $r\in [0,1)$, the map $z\mapsto H(z,r)$ is injective on $\partial \Omega$. Note that such map exists for any Jordan domain $\Omega$ (for example, the map $H(z,r)=\omega (rz)$, where $\omega$ is a conformal map from $\D$ to $\Omega$). Let $(r_n)_n$ be an increasing sequence in $[0,1)$ tending to $1$ and, for $n\in \N$, denote by $\psi_n:\partial \Omega \to \Omega$ the continuous map defined by $\psi_n(z)=H(z,r_n)$. Clearly, $(\psi_n)_n$ satisfies the sufficient condition in Corollary \ref{coro-Jordan-union}, so we can derive the following extension of the notion of Abel universal functions to any Jordan domain.

\begin{coro}\label{coro-Abel-Jordan-Domain}Let $\Omega$ and $(\psi_n)_n$ be as above. There exists a function $f\in H(\Omega)$ such that for any compact set $K\subset \partial \Omega$ different from $\partial \Omega$, the set $\{f\circ \psi_n:\, n\in \N\}$ is dense in $\CC(K)$.
\end{coro}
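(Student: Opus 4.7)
The plan is to deduce the statement directly from Corollary \ref{coro-Jordan-union}. Take $G=\partial\Omega$, which, $\Omega$ being a Jordan domain, is a single Jordan curve and hence a (trivially) uniformly separated countable union of Jordan curves. Let $\MM$ be the family of all compact subsets of $\partial\Omega$ with connected complement; by Lemma \ref{geo-lemma1}, this contains all proper compact subsets of $\partial\Omega$, and these are exactly the compact subsets different from $\partial\Omega$ itself. Setting $\Phi=(\psi_n)_n$ with $\psi_n(z)=H(z,r_n)$, the goal is to check the sufficient condition of Corollary \ref{coro-Jordan-union}: for every compact $L\subset\Omega$ and every $K\in\MM$, some $\psi_n$ is injective on $K$ and satisfies $\psi_n(K)\cap L=\emptyset$. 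Injectivity is free for every $n$, since the map $z\mapsto H(z,r_n)$ is injective on $\partial\Omega$ by assumption.

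The main point is the disjointness. Since $L$ is compact in the open set $\Omega$, the quantity $\delta:=\dist(L,\partial\Omega)>0$. The idea is that $\psi_n\to\mathrm{id}_{\partial\Omega}$ uniformly on the compact set $K$, so for $n$ large enough, $|\psi_n(z)-z|<\delta$ for every $z\in K$, which puts $\psi_n(K)$ in the open collar $\{w\in\Omega:\dist(w,\partial\Omega)<\delta\}$ and hence disjoint from $L$. To get this uniform convergence, I would extend $H$ to $\partial\Omega\times[0,1]$ by $H(z,1)=z$. Continuity of this extension at points of $\partial\Omega\times\{1\}$ follows from the pointwise convergence $H(z,r)\to z$ combined with the given joint continuity of $H$ on $\partial\Omega\times[0,1)$ (the typical model $H(z,r)=\omega(rz)$, with $\omega$ the Carathéodory extension of a Riemann map, makes this transparent, since then $H$ is the restriction of a function continuous on the compact $\overline{\D}$). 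Once continuity on the compact set $K\times[0,1]$ is secured, uniform continuity upgrades pointwise convergence in $r$ to uniform convergence in $z\in K$.

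Combining these two points, the hypothesis of Corollary \ref{coro-Jordan-union} is satisfied, and we obtain a (in fact, a dense $G_\delta$-set of) function $f\in H(\Omega)$ such that $\{f\circ\psi_n:n\in\N\}$ is dense in $\CC(K)$ for every $K\in\MM$, which is exactly the desired conclusion.

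The only real obstacle is precisely the uniform-convergence step. Once one agrees that $H$ extends continuously to $\partial\Omega\times[0,1]$ with $H(\cdot,1)=\mathrm{id}$ — which is the natural reading of the hypothesis and certainly holds for the announced model example — everything else is a mechanical application of Corollary \ref{coro-Jordan-union}.
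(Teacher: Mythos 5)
Your route is exactly the paper's: the paper derives the corollary by asserting that $(\psi_n)_n$ ``clearly'' satisfies the sufficient condition of Corollary \ref{coro-Jordan-union}, which is precisely the reduction you carry out (injectivity on $K$ is free from the hypothesis on $H$, and the only point to check is that for each compact $K\subsetneq\partial\Omega$ and each compact $L\subset\Omega$ some $\psi_n(K)$ misses $L$). You are also right that everything hinges on upgrading the pointwise convergence $H(z,r)\to z$ to uniform convergence on $K$.

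However, the justification you give for that upgrade is not valid: pointwise convergence as $r\to1$ together with joint continuity of $H$ on $\partial\Omega\times[0,1)$ does \emph{not} imply that the extension by $H(z,1)=z$ is continuous on $\partial\Omega\times[0,1]$, nor that the convergence is locally uniform. For instance, on $\Omega=\D$ take $H(e^{i\theta},r)=\rho(\theta,r)e^{i\theta}$, where $\rho(\theta,r)=r$ outside a ``dip'' of width $(1-r)/2$ centred at $\theta=1-r$ on which $\rho$ descends to $r/2$: this $H$ is continuous on $\T\times[0,1)$, injective in $z$ for each fixed $r$, and converges pointwise to the identity, yet $\psi_n(K)$ meets the fixed compact set $L=\{(r_n/2)e^{i(1-r_n)}:\,n\in\N\}\cup\{1/2\}$ for \emph{every} $n$ as soon as $K$ is a closed arc containing a neighbourhood of $1$ in $\T$, so the hypothesis of Corollary \ref{coro-Jordan-union} genuinely fails. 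The repair is to read (or strengthen) the hypothesis on $H$ as continuity up to $\partial\Omega\times\{1\}$ with $H(\cdot,1)=\mathrm{id}$, equivalently uniform convergence of $H(\cdot,r)$ to the identity on $\partial\Omega$; this does hold for the model $H(z,r)=\omega(rz)$ by Carath\'eodory's theorem, and under it the rest of your argument (compactness of $K\times[0,1]$, then $\dist(L,\partial\Omega)>0$) closes the proof. In fairness, the paper's own one-word proof (``clearly'') glosses over exactly the same point, so your write-up has the merit of isolating where the real content lies, even if the sentence claiming the extension is automatically continuous should be replaced by an explicit hypothesis.
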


%\medskip
%
%However, if we skip the formalism of composition by injective continuous mappings, then one can still use the standard ideas of proofs - Baire argument or construction - to prove the existence of holomorphic functions with wild boundary behaviour in general domains (the details are left to the reader):
%
%\begin{prop}Let $\Omega$ be a simply connected domain and let $(L_n)_n$ be an exhaustion of $\Omega$ by Jordan domains. There exists a function $f\in H(\Omega)$ such that for any sequence $(g_n)_n$ in $H(\Omega)$ for any open set $U \subset \C$ which no connected component of $\partial \Omega$, there exists a sequence $(\lambda_n^{(m)})_n$ of integers such that
%	\[\sup _{z\in L_{\lambda_n^{(m)}}\cap U}|f(z) - g_m (z)| \to 0,\quad n\to \infty.
%	\]
%\end{prop}

\medskip

Let us finish by a remark.

\begin{rem}\label{remark-cannot}{\rm Let $\Omega=\C\setminus \D$, $G=\T$ and $\Phi=(\phi_n)_n$ from $\T$ to $\D$, defined by $\phi_n(z)=R_nz$, where $(R_n)_n$ is a sequence in $]1,+\infty[$ converging to $1$. We have seen in the above example (1) that there exist $(\Phi,\MM)$-universal functions, where $\MM$ is the set of all compact subset of $G$, different from $G$. There cannot exist functions $f\in H(\Omega)$ that are $(\Phi,\{G\})$-universal. Indeed, if $f$ were such a function, there would exist three closed Jordan curve $\Gamma_1$, $\Gamma_2$ and $\Gamma_3$ (respectively images of $\T$ by $\phi_{n_1}$, $\phi_{n_2}$ and $\phi_{n_3}$ with $n_1$, $n_2$ and $n_3$ large enough), such that the closed Jordan domain bounded by $\Gamma_2$ is contained in the interior of that bounded by $\Gamma_1$, and the closed Jordan domain bounded by $\Gamma_3$ is contained in the interior of that bounded by $\Gamma_2$, so that $f$ is close to $0$ on $\Gamma_1\cup \Gamma_3$ and as big as any fixed positive number on $\Gamma_2$, contradicting the maximum modulus principle applied to $f$ on the domain bounded by $\Gamma_1\cup \Gamma_3$.
		
The same kind of argument ensures that, if $\Omega=\D$, $G=\T$ and $\Phi=(\phi_n)_n$ from $\T$ to $\D$, defined by $\phi_n(z)=r_nz$, where $(r_n)_n$ is a sequence in $]0,1[$ converging to $1$, then there do not exist $(\Phi,\{G\})$-universal functions.}
\end{rem}

\end{document}